\newcommand{\C}{\mathbb C}
\newcommand{\F}{\mathbb F}
\newcommand{\R}{\mathbb R}
\newcommand{\N}{\mathbb N}
\newcommand{\T}{\mathbb T}
\newcommand{\fE}{\mathcal E}
\newcommand{\fF}{\mathcal F}
\newcommand{\fJ}{\mathcal J}
\newcommand{\fO}{\mathcal O}
\newcommand{\fU}{\mathcal U}
\newcommand{\fV}{\mathcal V}
\newcommand{\supp}{\mathrm{supp }}
\newcommand{\sumi}{\sum_{i=1}^n}
\newcommand{\QT}{\mathrm{QT}}
\newtheorem{thm}{Theorem}[section]
\newtheorem{pro}[thm]{Proposition}
\newtheorem{cor}[thm]{Corollary}
\newtheorem{lem}[thm]{Lemma}
\theoremstyle{definition}
\newtheorem{rem}[thm]{Remark}       
\newtheorem{defn}[thm]{Definition}  
\numberwithin{equation}{section}
 \newcommand{\mystar}{\mathop{*}}
\begin{document}

\title[Quasitraces on exact C*-algebras are traces]
 {Quasitraces on exact C*-algebras are traces}

\author{ Uffe Haagerup}
\thanks{}

\address{Department of Mathematical Sciences, University of Copenhagen, Universitetsparken 5,
2100 K\o benhavn \O ,
 Denmark.}

\email{haagerup@math.ku.dk}

\keywords{}

\subjclass{}
\date{March 29, 2014}

\thanks{Supported by ERC Advanced Grant no. OAFPG 247321, the Danish Natural Science Research Council, and the Danish National Research Foundation through the Centre for Symmetry and Deformation (DNRF92).}

\begin{abstract}
It is shown that all 2-quasitraces on a unital exact $C^*$-algebra are traces.
As consequences one gets: (1) Every stably finite exact  unital $C^*$-algebra  has a tracial state, and (2) if an $AW^*$-factor of type $II_1$ is generated (as an $AW^*$-algebra) by an exact $C^*$-subalgebra, then it is a von Neumann $II_1$-factor. This is a partial solution to a well known problem of Kaplansky. The present result was used by Blackadar, Kumjian and R\o rdam to prove that $RR(A)=0$ for every simple non-commutative torus of any dimension.
\end{abstract}

\maketitle

\section{Introduction}
Let $A$ be a unital $C^*$-algebra. A 1-quasitrace $\tau$ on $A$ is a function $\tau:A\to\C$ that satisfies
\begin{itemize}
  \item [($i$)] $\tau(x^*x)=\tau(xx^*)\ge 0$,\,\, $x\in A$,
  \item [($ii$)] $\tau(a+ib)=\tau(a)+i\tau(b)$ for $a,b\in A_{\text{sa}}$,
  \item [($iii$)] $\tau$ is linear  on every abelian $C^*$-subalgebra $B$ of $A$.
\end{itemize}

Furthermore, $\tau$ is called a $n$-quasitrace $(n\ge 2)$ if there exists a $1$-quasitrace  $\tau_n$ on $M_n(A)=A\otimes M_n(\C)$ such that
\begin{itemize}
  \item [($iv$)] $\tau(x)=\tau_n(x\otimes e_{11})$,\,\,  $x\in A$,
\end{itemize}
where $(e_{ij})_{i,j=1}^n$ denote the matrix units of $M_n(\C)$. Moreover $\tau$ is called normalized if $\tau(1)=1$.
The famous and still open problem of Kaplansky, asking whether every $AW^*$-factor $M$ of Type $II_1$ is a von Neumann algebra, has an affirmative answer if and only if the unique normalized 1-quasitrace $\tau$ on $M$ is linear (i.e.~$\tau$ is a trace on  $M$).

Blackadar and Handelman proved in $1982$ (cf.~\cite{BH}), that a 2-quasitrace  on a unital $C^*$-algebra $A$ is automatically an $n$-quasitrace for all $n\in\N$. Moreover, they showed that if Kaplansky's problem has an affirmative answer, then every  2-quasitrace on a unital $C^*$-algebra $A$ is a trace.

The main result of this paper is that every 2-quasitrace on a unital exact $C^*$-algebra $A$ is a trace. In particular, this holds for every unital nuclear $C^*$-algebra  $A$ and every unital $C^*$-subalgebra  $A$ of a nuclear $C^*$-algebra.

Quasitraces (or more precisely 2-quasitraces) have become an important tool in the classification theory of $C^*$-algebras. It was proved in \cite{BH} that every stably finite unital $C^*$-algebra has a 2-quasitrace. More generally, Blackadar and R\o rdam proved in \cite{BR} that for a unital $C^*$-algebra $A$, every state on $K_0(A)$ is given by a 2-quasitrace.  The latter result combined with the main result of this paper was used in \cite{BKR} to prove that every simple non-commutative torus has real rank 0.

A first draft of this paper was completed in 1991 and it has been circulated among experts in the field since then. Thanks to the efforts of George Elliott, the paper has now finally been completed for publication.

The remaining sections of this paper have been left essentially unchanged since the first draft from 1991. We will use the rest of this section to mention briefly four more recent results related to this paper:

In 1997 (cf.~\cite{Kir4}) Kirchberg used the main result of this paper to prove that every exact stably projectionless $C^*$-algebra $A$ has a nontrivial lower semicontinuous trace on its Pedersen ideal $P(A)$.

In 1999 (cf.~\cite{HT}), Thorbj\o rnsen and the author used random matrix methods to give new proofs, not relying on quasitraces or $AW^*$-algebras, for the facts that $(1)$ a simple unital stably finite exact $C^*$-algebra has a tracial state, and (2) every state on $K_0(A)$ for a unital exact $C^*$-algebra $A$ is given by a tracial state.

In 2006 (cf.~\cite{Kir}) Kirchberg proved that there exists a unital $C^*$-algebra $A$ with a 1-quasitrace $\tau$ which is not a 2-quasitrace.
In particular $\tau$ is not a trace.

In 2011 (cf.~\cite{BW}) Brown and Winter gave a short proof of the main result of this paper for the special case where $A$ is a nuclear $C^*$-algebra of finite nuclear dimension.

\section{An application of Voiculescu's semicircular system}

We shall need the following algebraic characterization of unital $C^*$-algebras without trace states:

\begin{lem}\label{l:2.1}
Let $A$ be a unital $C^*$-algebra. Then the following two conditions are equivalent:
\begin{itemize}
  \item [($a$)] $A$ has no trace state.
  \item [($b$)] There is a finite set $\{a_1,\ldots,a_n\}\subset A$ such that
  $$\sum_{i=1}^{n} a_i^*a_i=1\qquad\text{and}\qquad ||\sum_{i=1}^{n} a_ia_i^*||<1.$$
\end{itemize}
\end{lem}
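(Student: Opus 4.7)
The direction $(b)\Rightarrow(a)$ is immediate: a tracial state $\tau$ would force $1=\tau(\sum a_i^*a_i)=\tau(\sum a_i a_i^*)\le\|\sum a_i a_i^*\|<1$, a contradiction. My plan for the harder direction $(a)\Rightarrow(b)$ is to first prove the intermediate algebraic equivalence
\[
\text{$A$ has no tracial state}\iff \exists\, c_1,\dots,c_n\in A\text{ with }\sum_{i=1}^n c_i^*c_i\ \ge\ 1+\sum_{i=1}^n c_i c_i^*,
\]
using Hahn--Banach, and then to rescale the $c_i$'s and append one extra element to obtain the normalized form in $(b)$.

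For the intermediate equivalence, let $V\subseteq A_{\text{sa}}$ denote the set of finite sums $\sum(c_i^*c_i-c_ic_i^*)$. The substitution $c_i\mapsto c_i^*$ shows $-V=V$, and rescaling $c_i\mapsto\sqrt{\lambda}\,c_i$ ($\lambda\ge 0$) shows $V$ is stable under nonnegative dilation, so $V$ is a real linear subspace. Suppose $1\notin\overline{V-A_+}$: since $V-A_+$ is a convex cone, separation gives a continuous $\R$-linear $f\colon A_{\text{sa}}\to\R$ with $f(1)>0$ and $f\le 0$ on $V-A_+$. Because $V$ is a subspace, $f|_V=0$; because $-A_+\subseteq V-A_+$, $f\ge 0$ on $A_+$. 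Extending complex-linearly by $f(a+ib)=f(a)+if(b)$ and normalizing gives a state $\tau$ with $\tau(a^*a)=\tau(aa^*)$ for every $a\in A$. Polarization on $(a+b)^*(a+b)$ and $(a+ib)^*(a+ib)$ then makes $\tau$ tracial, contradicting $(a)$. Hence $1\in\overline{V-A_+}$, and writing $\ell_n-p_n\to 1$ with $\ell_n\in V$, $p_n\in A_+$, self-adjointness forces $\ell_n\ge(1-\|r_n\|)\cdot 1$ for $r_n:=\ell_n-p_n-1\to 0$; dividing $\ell_n$ by $1-\|r_n\|$ (which stays in $V$) unpacks to the desired $c_i$'s.

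Given $\alpha:=\sum c_i^*c_i\ge 1+\sum c_i c_i^*=:1+\beta$, I would then set $a_i=c_i/\|\alpha\|^{1/2}$ for $i\le n$ and $a_{n+1}=(1-\alpha/\|\alpha\|)^{1/2}$, well-defined since $0\le\alpha\le\|\alpha\|\cdot 1$. Then $\sum_{i=1}^{n+1}a_i^*a_i=\alpha/\|\alpha\|+(1-\alpha/\|\alpha\|)=1$, while
\[
\sum_{i=1}^{n+1}a_i a_i^* \;=\; \beta/\|\alpha\| + 1 - \alpha/\|\alpha\| \;=\; 1 + (\beta-\alpha)/\|\alpha\| \;\le\; 1 - 1/\|\alpha\| \;<\; 1,
\]
as required. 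The main obstacle is the Hahn--Banach/polarization bridge from the abstract nonexistence of a tracial state to the concrete algebraic inequality $\sum c_i^*c_i\ge 1+\sum c_i c_i^*$; the final trick of appending $(1-\alpha/\|\alpha\|)^{1/2}$ to enforce $\sum a_i^*a_i=1$ exactly while preserving $\|\sum a_i a_i^*\|<1$ is short but is what promotes an inequality into the normalized identity demanded in $(b)$.
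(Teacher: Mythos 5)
Your proof is correct, and it takes a genuinely different route from the paper. For $(a)\Rightarrow(b)$ the paper works in the bidual: absence of tracial states makes $A^{**}$ a properly infinite von Neumann algebra, so one picks isometries $v_1,v_2\in A^{**}$ with $v_1v_1^*+v_2v_2^*=1$, approximates them $\sigma$-strong$^*$ from $A$, and uses the fact that a convex set has the same norm and weak closures to get $b_1,\dots,b_n\in A$ with $\sum b_i^*b_i$ close to $2$ and $\sum b_ib_i^*$ close to $1$; the normalization is then $a_i=b_i(\sum_j b_j^*b_j)^{-1/2}$. You instead run a Cuntz--Pedersen style separation argument entirely inside $A_{\mathrm{sa}}$: the set $V$ of finite sums $\sum(c_i^*c_i-c_ic_i^*)$ is a real subspace (your closure checks are right), Hahn--Banach separation of $1$ from the closed convex cone $\overline{V-A_+}$ plus the standard polarization identity would manufacture a tracial state, so under $(a)$ one gets $1\in\overline{V-A_+}$ and hence, after the small perturbation argument, elements with $\sum c_i^*c_i\ge 1+\sum c_ic_i^*$. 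Your normalization is also different from the paper's and works: with $\alpha=\sum c_i^*c_i$, appending $(1-\alpha/\|\alpha\|)^{1/2}$ gives $\sum a_i^*a_i=1$ exactly and $\sum a_ia_i^*\le(1-1/\|\alpha\|)1$, so its norm is strictly below $1$. What each approach buys: yours is more elementary and self-contained (no bidual, no structure theory of properly infinite von Neumann algebras, no Kaplansky-density step), while the paper's isometry picture generalizes more directly --- replacing two isometries by $r$ of them immediately yields the sharper condition $(b')$ of the remark following the lemma ($\|\sum a_ia_i^*\|<\varepsilon$), which does not drop out of your normalization as easily, since in the inequality $\sum c_i^*c_i\ge r+\sum c_ic_i^*$ the norm $\|\alpha\|$ grows at least as fast as $r$.
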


\begin{proof}
$(b)\Rightarrow (a)$: Assume $(b)$ and let $\tau$ be a trace state on $A$. Then $\tau(\sum_{i=1}^n a_ia_i^*)=\tau(\sum_{i=1}^n a_i^* a_i)=1$ which contradicts that $||\sum_{i=1}^n a_i a_i^*|| <1$.\\
$(a)\Rightarrow(b)$: Assume $(a)$. Then the second dual $A^{**}$ is a von Neumann algebra without normal trace states; i.e.~$A^{**}$ is a properly infinite von Neumann algebra. Hence, we can choose two isometries $v_1,v_2\in A^{**}$ such that $v_1^*v_1\bot v_2^* v_2$ and $v_1v_1^*+v_2v_2^*=1$.
Choose a net $(b_1^{(\alpha)}, b_2^{(\alpha)})_{\alpha\in \Lambda}$ in $A\oplus A$ which converges to $(v_1,v_2)$ in the $\sigma$-strong*-topology.
Then,

\begin{eqnarray*}
&&\sum_{i=1}^2 (b_i^{(\alpha)})^*b_i^{(\alpha)}\to \sum_{i=1}^2 v_i^*v_i=2 \quad (\sigma\text{-weakly})\\
&&\sum_{i=1}^2 b_i^{(\alpha)}(b_i^{(\alpha)})^*\to \sum_{i=1}^2 v_iv_i^*=1\quad (\sigma\text{-weakly}).
 \end{eqnarray*}

Since the restriction of $\sigma$-weak topology on $A^{**}$ to $A$ is equal to the $\sigma(A,A^*)$-topology we get
$$\{2,1\}  \in \overline{\{( \sum_{i=1}^2 b_i^* b_i, \sum_{i=1}^2 b_i b_i^*) \mid b_1,b_2\in A\}}^{\sigma(A\oplus A, A^*\oplus A^*)}.$$

Since the set
$$\{ (\sum_{i=1}^n b_i^*b_i,\sum_{i=1}^n b_i b_i^*)\mid n\in\N, b_1,\ldots, b_n\in A
\}$$
is convex, and since convex sets in Banach spaces have the same closure in the norm and  weak topologies, we get that for all $\varepsilon>0$ there is an $n\in\N$ and $b_1\ldots,b_n\in A$ such that
\begin{eqnarray*}
  ||\sum_{i=1}^n b_i^*b_i-2||<\varepsilon\\
  ||\sum_{i=1}^n b_ib_i^*-1||<\varepsilon.
\end{eqnarray*}
Assume $\varepsilon=\frac 13$. Then $\frac 5 3 \le \sum_{i=1}^n b_i^* b_i\le \frac 7 3 $ and $\frac 2 3 \le \sum_{i=1}^n b_i b_i^*\le \frac 4 3$.
Set $$a_i=b_i(\sum_{i=1}^n b_i^* b_i)^{-1/2}.$$
Then $\sum_{i=1}^n a_i^*a_i=1$ and since
$$a_ia_i^*=b_i(\sum_{j=1}^n b_j^*b_j)^{-1} b_i^*\le \frac 3 5 b_i b_i^*$$
we have
$$||\sum_{i=1}^n a_ia_i^*||\le \frac 3 5 || \sum_{i=1}^n b_ib_i^*||\le \frac 4 5 <1,$$
which proves $(b)$.
\end{proof}

\begin{rem}
   By using an arbitrary number of isometries $(v_i)_{i=1}^r$ in the proof of $(a)\Rightarrow (b)$ one gets easily $(a)\iff (b')$, where
   \begin{itemize}
     \item [($b'$)] For all $\varepsilon>0$ there is a finite set $\{a_1,\ldots,a_n\}\subset A$, such that
     $$\sum_{i=1}^n a_i^*a_i=1 \qquad \text{and} \qquad ||\sum_{i=1}^n a_i a_i^*||< \varepsilon.$$
   \end{itemize}
\end{rem}

In \cite{V1} Voiculescu introduced the reduced  free product $(A,\phi)=\mystar_{i\in I} (A_i, \phi_i)$ of a family $(A_i)_{i\in I}$ of unital $C^*$-algebras with respect to a specified set of states $(\phi_i)_{i\in I}$, $\phi_i\in S(A_i)$.
$$\phi=\mystar_{i\in I} \phi_i$$
is a state on $A$ characterized by
$$\phi(a_{1}a_{2}\cdots a_{n})=0$$
whenever $a_k\in A_{i_k}$, $i_1\ne i_2\ne \cdots\ne i_n$ and $\phi_{i_k}(a_k)=0$.
A special case of interest is the semicircular system introduced in \cite{V2}. Here
\begin{eqnarray*}
  &&A_i=C([-1,1])\\
  &&\tau_i(f)=\frac 2 \pi \int_{-1}^{1} f(t)\sqrt{1-t^2}\,dt, \qquad f\in C([-1,1]),
\end{eqnarray*}
for all $i$. Let $x_i\in A_i\subset A$ be the identity function on $[-1,1]$. Then $A$ is the $C^*$-algebra generated by $1$ and $(x_i)_{i\in I}$, and $\tau=\mystar_{i\in I} \tau_i$ is a faithful trace on $A$, and $(A,(x_i)_{in\in I},\tau)$ is a semicircular system in the sense of \cite{V2}.
A concrete model of $(A,(x_i)_{i\in I},\tau)$ can be obtained in the following way (cf.~\cite {V1}):
Let $H$ be a Hilbert space with orthonormal basis  $(e_i)_{i\in I}$, and let
$\fF(H)=\C\oplus (\bigoplus_{n=1}^\infty H^{\otimes n})$  be the full Fock space based on $H$.
Let $s_i$ be the isometry $\fF(H)\to \fF(H)$ obtained by tensoring from the left by $e_i$ on each $H^{\otimes n}$, $n=0,1,2,\ldots$,
where $H^{\otimes n}=\C$ for $n=0$. Then
\begin{eqnarray}\label{e:page2.5}
&&  s_i^*s_i=1\quad \forall i\in I\\
\nonumber&&  s_is_i^* \perp s_js_j^* \quad \forall i,j\in I, i\ne j\\
\nonumber&&  1-\sum_{i\in I} s_is_i^* \text{ is the projection of $\fF(H)$ onto the $\C$-component of } \fF(H).
\end{eqnarray}

Moreover, $x_i=\frac 1 2 (s_i+s_i^*)$, $i\in I$, is a semicircular system and the trace state $\tau$ is simply the ``vacuum-state'', i.e., the vector state given by a unit vector in the $\C$-component of $\fF(H)$ on $A=C^*(\{(x_i)_{i\in I}, 1\})$.
If $I=\{1,\ldots,n\}$ (resp.~$I=\N$) we will denote the unital $C^*$-algebra  generated by the $x_i$'s by $\fV_n$ (resp.~$\fV_\infty$). By the equations in (\ref{e:page2.5}), one has natural embeddings
\begin{eqnarray*}
 &&\fV_n\hookrightarrow \fE_n, \quad n\in\N\\
 && \fV_\infty \hookrightarrow \fO_\infty,
\end{eqnarray*}
where $\fE_n$ denotes the compact extension of the Cuntz  algebra $\fO_n$  given in \cite{C1}, and $\fO_\infty$  is the usual Cuntz algebra generated by a sequence of isometries $(s_i)_{i=1}^\infty$. The trace $\tau$ on $\fV_n$, $n\in\N$ (resp.~$\fV_\infty$) is the restriction of the unique (pure) state $\phi$ on $\fE_n$ (resp.~$\fO_\infty$) which satisfies $\phi(s_is_i^*)=0$ for all $i\in I$.

\begin{lem}\label{l:2.3}
  Let $A$ be a unital $C^*$-algebra without trace states. Then $A\otimes \fV_\infty$ contains a non-unitary isometry. ($A\otimes \fV_\infty$ is the spatial\,(=minimal)  $C^*$-tensor product of $A$ and $\fV_\infty$.)
\end{lem}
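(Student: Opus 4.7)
The plan is to use Lemma~\ref{l:2.1} in its strengthened form $(b')$ from the Remark to pick $a_1,\ldots,a_n\in A$ with $\sum a_i^*a_i=1$ and $\|\sum a_ia_i^*\|<\varepsilon$ for $\varepsilon>0$ to be fixed small. I would then form $V=\sum_{i=1}^n a_i\otimes 2x_i\in A\otimes\fV_n\subset A\otimes\fV_\infty$, show that $V^*V$ is close to $1$ so that $U:=V(V^*V)^{-1/2}\in A\otimes\fV_\infty$ is an honest isometry, and finally use the trace $\tau$ on $\fV_\infty$ to certify that $U$ is not unitary.

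For the first step I would compute $V^*V$ inside $A\otimes\fE_n$ using the embedding $\fV_n\hookrightarrow\fE_n$ and $2x_i=s_i+s_i^*$. Expanding
\[
V^*V=\sum_{i,j}a_i^*a_j\otimes(s_i+s_i^*)(s_j+s_j^*),
\]
one of the four resulting sums equals $\sum_i a_i^*a_i\otimes 1=1$ thanks to the Cuntz--Toeplitz relations $s_i^*s_j=\delta_{ij}$. The remaining three sums each factor as a product of two linear expressions such as $\sum a_i\otimes s_i$ or $\sum a_i\otimes s_i^*$, whose norms the $C^*$-identity evaluates to either $\|\sum a_i^*a_i\|^{1/2}=1$ or $\|\sum a_ia_i^*\|^{1/2}<\sqrt{\varepsilon}$; this yields $\|V^*V-1\|\le 2\sqrt{\varepsilon}+\varepsilon$. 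Injectivity of the minimal tensor product makes the bound equally valid in $A\otimes\fV_\infty$.

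For the second step, once $\varepsilon$ is small enough that $V^*V$ is invertible, $U=V(V^*V)^{-1/2}$ lies in $A\otimes\fV_\infty$ by continuous functional calculus and satisfies $U^*U=1$. To rule out $UU^*=1$ I would apply the slice map $E=\mathrm{id}_A\otimes\tau\colon A\otimes\fV_\infty\to A$. Freeness of the $x_i$ together with the semicircular moment $\tau(x_i^2)=1/4$ gives $\tau(x_ix_j)=\delta_{ij}/4$, so $E(VV^*)=\sum_i a_ia_i^*$, whose norm is less than $\varepsilon$. Combining the operator inequality $(V^*V)^{-1}\le(1-\|V^*V-1\|)^{-1}$ with the positivity of $E$ then yields
\[
\|E(UU^*)\|\le\frac{\varepsilon}{1-2\sqrt{\varepsilon}-\varepsilon},
\]
which is less than $1$ for $\varepsilon$ sufficiently small. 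Since $E$ is unital, $UU^*=1$ would force $\|E(UU^*)\|=1$; hence $UU^*\ne 1$.

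The main conceptual obstacle is that the Cuntz--Toeplitz isometries $s_i$ do not themselves lie in $\fV_\infty$, so the naive candidate $\sum a_i\otimes s_i$ is unavailable. Replacing $s_i$ by $2x_i=s_i+s_i^*$ puts the element into $A\otimes\fV_\infty$; the embedding $\fV_n\hookrightarrow\fE_n$ then lets me carry out the Cuntz--Toeplitz algebra needed for the estimates, while the injectivity of $\otimes_{\min}$ ensures that the resulting norm bounds are the ones that actually hold in $A\otimes\fV_\infty$.
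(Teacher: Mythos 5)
Your proof is correct, and it constructs the same isometry as the paper---$y=2\sum_i a_i\otimes x_i$ with $u=y(y^*y)^{-1/2}$---but it certifies the two key facts by a different route. The paper needs only Lemma \ref{l:2.1}(b) (a fixed bound $\|\sum_i a_ia_i^*\|<1$): writing $y=v+w$ inside $A\otimes\fO_\infty$ with $v=\sum_i a_i\otimes s_i$ an isometry and $\|w\|=\|\sum_i a_ia_i^*\|^{1/2}<1$, it gets invertibility of $y^*y$ from the lower bound $\|y\xi\|\ge(1-\|w\|)\|\xi\|$, and non-unitarity of $u$ from the factorization $y=(1+wv^*)v$: here $1+wv^*$ is invertible while the non-unitary isometry $v$ is not, so $y$ is not invertible and the partial isometry in its polar decomposition cannot be unitary. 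You instead invoke the $\varepsilon$-strengthening $(b')$ from the Remark, obtain $\|y^*y-1\|\le 2\sqrt{\varepsilon}+\varepsilon$ by the same Cuntz--Toeplitz expansion, and rule out $uu^*=1$ quantitatively by applying the slice map $\mathrm{id}_A\otimes\tau$ together with the semicircular moments $\tau(x_ix_j)=\delta_{ij}/4$, which gives $\|(\mathrm{id}_A\otimes\tau)(uu^*)\|\le\varepsilon/(1-2\sqrt{\varepsilon}-\varepsilon)<1$; all the individual steps (the norm estimates via $\fE_n$ and injectivity of $\otimes_{\min}$, the operator inequality $uu^*\le(1-\delta)^{-1}yy^*$, positivity and unitality of the slice map) are valid. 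The trade-off: the paper's argument is softer---no smallness of $\varepsilon$, no slice maps or moment computations---while yours genuinely requires $(b')$ (with only $(b)$ your estimates, with $\varepsilon$ replaced by a constant less than $1$, would not force either invertibility of $y^*y$ or $\|(\mathrm{id}\otimes\tau)(uu^*)\|<1$), and $(b')$ is stated in the paper only with a sketched proof; on the other hand your slice-map certificate stays entirely quantitative and avoids the invertibility discussion in $A\otimes\fO_\infty$.
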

\begin{proof}
  Choose by Lemma \ref{l:2.1} elements $a_1,\ldots, a_n\in A$ such that
  $$\sumi a_i^*a_i=1\qquad\text{and}\qquad ||\sumi a_ia_i^*||<1$$
  and let $(s_i)_{i=1}^\infty $ be as in the equations in (\ref{e:page2.5}) with $I=\N$. Then
  $\fO_\infty=C^*((s_i)_{i=1}^\infty)$. With  $x_i=\frac 12 (s_i+s_i^*)$, $i\in\N$,
  $$\fV_\infty=C^*((x_i)_{i=1}^\infty,1),$$
  and $(x_i)_{i=1}^\infty$ is a semicircular system with respect to a faithful trace state $\tau$ on $\fV_\infty$.
  With the above notation,
  $$A\otimes \fV_\infty\subset A\otimes \fO_\infty.$$
  Let $y=2\sum_{i=1}^n a_i\otimes x_i \in A\otimes \fV_\infty$. Then $y=v+w$, where $v,w\in A\otimes \fO_\infty$ are given by
  $$v=\sumi a_i\otimes s_i,\qquad w=\sumi a_i\otimes s_i^*.$$
  Since $$s_i^*s_i=\left\{
                    \begin{array}{ll}
                      1, & \hbox{$i=j$} \\
                      0, & \hbox{$i\ne j$}
                    \end{array}
                  \right.,
                  $$
  we have $v^*v=\sumi a_i^*a_i\otimes 1=1_{A\otimes \fO_\infty}$,
  i.e.~$v$ is an isometry. The range projection of $v$ is clearly bounded by $1\otimes \sumi s_is_i^*$.
  Thus $v$ is a non-unitary isometry in $A\otimes \fO_\infty$.
  Since
  $$w^*=\sumi a_i^*\otimes s_i$$
  we get as above
  $$ww^*=\sumi a_ia_i^* \otimes 1,$$
  so by the choice of $(a_i)_{i=1}^n$, we have $||w||<1$.
  We may assume that $A\otimes \fO_\infty\subset B(K)$ for some Hilbert space $K$ (unit preserving embedding).
  For $\xi\in K$,
  $$||y\xi||=||(u+w)\xi||\ge ||v\xi||-||w\xi||\ge (1-||w||)||\xi||.$$
Hence $y^*y$ is invertible. Note that
$$y=(1+w v^*)v.$$
  Moreover, $1+w v^*$ is invertible because $||wv^*||<1$, while $v$ is not invertible. Hence $y$ is not invertible.
  Let $u=y(y^*y)^{-1/2}\in A\otimes \fV_\infty$. Then
  $$y=u(y^*y)^{1/2}$$
  is the polar decomposition of $y$, and $u^*u=1$ while $uu^*\ne 1$. This completes the proof.

\end{proof}

\begin{thm}\label{t:2.4}
  Let $A$ be a unital $C^*$-algebra without trace states.
  Then $A\otimes C_r^*(\F_\infty)$ contains a non-unitary isometry.
  Here $\F_\infty$ is the free group on infinitely (countable) many generators.
\end{thm}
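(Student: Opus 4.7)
The plan is to reduce Theorem~\ref{t:2.4} to Lemma~\ref{l:2.3} by producing a unital $C^*$-algebra embedding $\iota \colon \fV_\infty \hookrightarrow C_r^*(\F_\infty)$. Granting such $\iota$, the induced map $\mathrm{id}_A \otimes \iota$ embeds $A \otimes \fV_\infty$ unitally into $A \otimes C_r^*(\F_\infty)$, because the minimal $C^*$-tensor product preserves unital $*$-monomorphisms, and the non-unitary isometry produced in Lemma~\ref{l:2.3} then transports immediately.

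The embedding $\iota$ should come from Voiculescu's free-probabilistic picture. Both $\fV_\infty = \mystar_{i\in\N}(C([-1,1]),\tau_{\text{sc}})$ and $C_r^*(\F_\infty) = \mystar_{i\in\N}(C(\mathbb{T}),\tau_{\text{Haar}})$ are reduced free products, and by Voiculescu's theorem both generate the same tracial von Neumann algebra $L(\F_\infty)$. However, a free semicircular element is not a norm limit of finite linear combinations of words in the Haar unitaries, so $\fV_\infty$ does not literally sit inside $C_r^*(\F_\infty)$ as subalgebras of $L(\F_\infty)$; a direct inclusion must be produced by other means.

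To bridge this gap I would form the reduced free product $B := C_r^*(\F_\infty) \mystar \fV_\infty$ with respect to the canonical traces, into which $\fV_\infty$ tautologically embeds unitally. The key claim is then that $B \cong C_r^*(\F_\infty)$ as unital $C^*$-algebras: freely adjoining a free semicircular family to $C_r^*(\F_\infty)$ in a trace-preserving manner produces another copy of $C_r^*(\F_\infty)$, reflecting the ``free-dimension'' identity behind $\F_\infty \mystar \F_\infty \cong \F_\infty$. Composing the inclusion $\fV_\infty \hookrightarrow B$ with this absorbing isomorphism yields $\iota$, and hence the theorem.

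The main obstacle is exactly this $C^*$-algebraic absorption identification $C_r^*(\F_\infty) \mystar \fV_\infty \cong C_r^*(\F_\infty)$. At the von Neumann level it is immediate from Voiculescu's generation theorem, but at the $C^*$-level it requires more delicate input on reduced free products of unital tracial $C^*$-algebras (of the type developed by Avitzour, Voiculescu, and Dykema). Once this is in hand, the tensor-product transport and the appeal to Lemma~\ref{l:2.3} are routine, and no further use of the hypothesis on $A$ is needed beyond what was already used in Lemma~\ref{l:2.1}.
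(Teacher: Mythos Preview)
Your reduction strategy---embed $\fV_\infty$ unitally into $C_r^*(\F_\infty)$, tensor with $A$, and invoke Lemma~\ref{l:2.3}---is exactly the paper's. The gap is in how you obtain the embedding $\iota$.

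Your assertion that ``$\fV_\infty$ does not literally sit inside $C_r^*(\F_\infty)$'' is incorrect, and the paper's proof hinges on precisely this direct inclusion. Take the unitary generators $u_n$ of $C_r^*(\F_\infty)$ and set $y_n=\tfrac12(u_n+u_n^*)$. The $y_n$ are free self-adjoint elements with spectrum $[-1,1]$ and arcsine distribution (density $g(t)=\tfrac{1}{\pi}(1-t^2)^{-1/2}$). If $G$ and $F$ denote the distribution functions of the arcsine and semicircle laws respectively, then $\Phi=F^{-1}\circ G$ is a homeomorphism of $[-1,1]$, hence a \emph{continuous} function, so $x_n:=\Phi(y_n)$ lies in $C^*(y_n,1)\subset C_r^*(\F_\infty)$. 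Since $\Phi$ pushes the arcsine law to the semicircle law and freeness is preserved under functional calculus in separate variables, $(x_n)$ is a free semicircular family, giving $\fV_\infty\cong C^*(1,x_1,x_2,\ldots)\subset C_r^*(\F_\infty)$. A semicircular element \emph{is} a norm limit of polynomials in a single Haar unitary and its adjoint.

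Your proposed workaround---proving $C_r^*(\F_\infty)\mystar\fV_\infty\cong C_r^*(\F_\infty)$ at the $C^*$-level---is both unnecessary and, as you yourself flag, not established in the proposal. Note that $C([-1,1])$ and $C(\T)$ are not isomorphic as $C^*$-algebras, so this absorption does not follow formally from associativity of reduced free products; and the most natural route to an embedding $C_r^*(\F_\infty)\mystar\fV_\infty\hookrightarrow C_r^*(\F_\infty)\mystar C_r^*(\F_\infty)\cong C_r^*(\F_\infty)$ via functoriality would already use the very embedding $\fV_\infty\hookrightarrow C_r^*(\F_\infty)$ you said was unavailable. Once you accept the functional-calculus embedding above, the detour through free-product absorption disappears and the proof is the two-line argument in the paper.
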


\begin{proof}
  Let $(u_n)_{n=1}^\infty$ be the unitary generators of $C_r^*(\F_\infty)$, and let
  $$y_n=\frac 1 2 (u_n+u_n^*).$$
  Then $(y_n)_{n=1}^\infty$ is a free system in the sense of Voiculescu, $\sigma(y_n)=[-1,1]$, and the measure on the spectrum $\sigma(y_n)$ given by the trace has density
  $$g(t)=\frac 1{\pi \sqrt{1-t^2}},\qquad t\in (-1,1)$$
  obtained by projecting the uniform distribution on the unit circle $\T$ onto the real line.
  Let
  $$ G(t)=\int_{-1}^t g(t)\, dt,\qquad t\in[-1,1]$$
  be the distribution function for $g(t)$ and let
  $$F(t)=\int_{-1}^t \frac 2\pi \sqrt{1-t^2}\,dt,\qquad t\in[-1,1]$$
  be the distribution function for the semicircular distribution.  Then $\Phi=F^{-1}\circ G$ is a homeomorphism of $[-1,1]$ onto itself which transforms the measure given by the density $g(t)$ onto the measure with density
  $$f(t)=\frac 2 \pi \sqrt{1-t^2}, \quad 1< t<1.$$
  Hence, $x_n=\Phi(y_n)$ form a semicircular system in the sense of \cite{V2}, which shows that
  $$\fV_\infty \simeq C^*(1,x_1,x_2,\ldots)\subset C_r^*(\F_\infty).$$
  This extends to a unital embedding of $A\otimes \fV_\infty$ into $A\otimes C_r(\F_\infty)$ and the theorem now follows from Lemma \ref{l:2.3}.

\end{proof}

\begin{rem}\label{r:2.5}
 ($a$) Since any free group $\F_n$ with at least 2 generators contains a copy of $\F_\infty$, $C_r^*(\F_\infty)$ has a unital embedding in $C_r^*(\F_n)$, $n\ge 2$, so in Theorem \ref{t:2.4} $C_r^*(\F_\infty)$ can be replaced by $C_r^*(\F_n)$ for any $n\ge 2$.\\
  ($b$) By choosing a continuous function $[-1,1]\to \T$ which transforms the semicircular distribution into the uniform distribution on $\T$ one gets that $C_r^*(\F_n)$ can be embedded in $\fV_n$ for any $n\ge 2$. Hence in Lemma \ref{l:2.3}, $\fV_\infty$ can be replaced by $\fV_n$ for any $n\ge 2$.
\end{rem}


\section{Quasitraces on $C^*$-algebras and $AW^*$-algebras}
Throughout this section $A$ denotes a unital $C^*$-algebra.
It has become customary to rename the 2-quasitraces of Blackadar and Handelman \cite{BH} to quasitraces (see for example \cite{R},\cite{BKR}).
Hence we will use the following definition.
\begin{defn}\label{d:3.1}
  A quasitrace $\tau$ on $A$ is a function $\tau:A\to\C$ which satisfies:
\begin{itemize}
\item[($i$)] $\tau(x^*x)=\tau(xx^*)\ge 0$ for all $x\in A$.
\item[($ii$)] $\tau(a+bi)=\tau(a)+i\tau(b)$ for $a,b\in A_{\mathrm{sa}}$.
\item[($iii$)] $\tau$ is linear on every abelian $C^*$-subalgebra $B$ of $A$.
\item[($iv$)] There is a function $\tau_2:M_2(A)\to\C$ satisfying $(i)$, $(ii)$, $(iii)$ such that
$$\tau(x)=\tau_2(x\otimes e_{11}),\quad x\in A.$$
\end{itemize}
\end{defn}

A quasitrace is normalized if $\tau(1)=1$ and the set of normalized quasitraces on $A$ is denoted by $\QT(A)$.

Note that $(i)$, $(ii)$, $(iii)$ correspond to the definition of quasitraces in \cite{BH}. If $A$ is an $AW^*$-algebra, $(i)$, $(ii)$, and $(iii)$ imply $(iv)$, but this is not true in general (cf.~\cite{Kir}).

By Theorem II.2.2 in \cite{BH} there is a bijection between $\QT(A)$ and the set of lower continuous semicontinuous dimension functions $D$ on $A$ (in the sense of Cuntz \cite{C2}). The correspondence is given by
$$D(x)=\sup\limits_{\varepsilon>0} \tau(f_\varepsilon(|x|)),\quad x\in A$$
where
$$f_\varepsilon(t)=\left\{
                     \begin{array}{ll}
                       0, & \hbox{$0\le t\le \tfrac \varepsilon 2$} \\
                       \tfrac 2{\varepsilon}t-1, & \hbox{$\tfrac{\varepsilon} 2< t<\varepsilon $} \\
                       1, & \hbox{$t\ge \varepsilon$}
                     \end{array}
                   \right.
$$

This correspondence together with Theorem I.1.1 in \cite{BH} gives
\begin{pro}\label{p:3.2}
  Let $\tau$ be a quasitrace on $A$. Then
  $$I=\{x\in A\mid \tau(x^*x)=0\}$$
  is a closed two-sided ideal in $A$, and there is a (unique) quasitrace $\bar\tau$ on $A/I$ such that
  $$\tau(x)=\bar\tau(\rho(x)),\quad x\in A,$$
  where $\rho$ denotes the quotient map.
\end{pro}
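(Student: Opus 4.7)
The plan is to leverage the bijection between quasitraces and lower semicontinuous dimension functions in the sense of Cuntz recalled just before the statement, and reduce the proposition to Theorem~I.1.1 of \cite{BH}, which concerns the null set of such a dimension function.

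First I would identify $I$ with the null set of the dimension function $D$ associated to $\tau$. On the abelian $C^*$-subalgebra $B=C^*(1,|x|)$, property~(iii) makes $\tau|_B$ a positive linear functional, hence integration against a finite positive Radon measure $\mu$ on $\sigma(|x|)$. Under this identification $\tau(x^*x)=\int t^2\,d\mu(t)$ and $\tau(f_\varepsilon(|x|))=\int f_\varepsilon\,d\mu$, so $\tau(x^*x)=0$ is equivalent to $\mu$ being supported on $\{0\}$, which in turn is equivalent to $\tau(f_\varepsilon(|x|))=0$ for every $\varepsilon>0$, that is, to $D(x)=0$. Theorem~I.1.1 of \cite{BH} then asserts exactly that this null set is a closed two-sided ideal, using the standard properties of Cuntz dimension functions: $D(x^*x)=D(xx^*)$, monotonicity under Cuntz subequivalence, orthogonal additivity, and lower semicontinuity.

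To construct $\bar\tau$ one must show that $\tau$ annihilates $I$ and descends to $A/I$. For self-adjoint $a\in I$, the measure representation on $C^*(1,a)$ forces the associated $\mu$ to be supported on $\{0\}$, so $\tau(a)=\int t\,d\mu=0$; combined with (ii) and $I=I^*$ this yields $\tau|_I=0$. It then remains to prove $\tau(x+y)=\tau(x)$ for all $y\in I$, so that $\bar\tau(\rho(x)):=\tau(x)$ is unambiguous. I expect this to be the main obstacle, because property (iii) only grants linearity on abelian subalgebras, while general $x$ and $y\in I$ need not commute. I would address it by invoking a Cauchy--Schwarz-type inequality for quasitraces, of the form $|\tau(x+y)-\tau(x)|^2\le C\,\tau(y^*y)$, which is available in \cite{BH} and whose proof makes essential use of the matricial axiom (iv).

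Finally, properties (i)--(iii) for $\bar\tau$ pass directly from $\tau$ via the $*$-homomorphism $\rho$, and (iv) follows by running the same quotient construction for $\tau_2$ on $M_2(A)$ with respect to its own null ideal, which equals $M_2(I)$. Uniqueness of $\bar\tau$ is immediate from the surjectivity of $\rho$.
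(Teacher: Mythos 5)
Your overall route is essentially the paper's: the paper disposes of Proposition \ref{p:3.2} by exactly the reduction you describe, namely the bijection $\tau\leftrightarrow D$ of Theorem II.2.2 in \cite{BH} combined with Theorem I.1.1 there, and your identification of $I$ with the null set of $D$ via the measure representation of $\tau$ on $C^*(|x|,1)$ is correct. The only real divergence is in how $\bar\tau$ is produced. The paper leaves this inside the cited machinery: $D$ is a dimension function vanishing on $I$, hence $D(x+y)=D(x)$ for all $y\in I$ (Cuntz subequivalence and subadditivity), so $D$ descends to a lower semicontinuous dimension function on $A/I$, and the inverse of the Blackadar--Handelman bijection applied to this induced dimension function gives $\bar\tau$ with $\tau=\bar\tau\circ\rho$. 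You instead build $\bar\tau$ directly, which is legitimate in principle.

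The one step I would not let stand as written is the appeal to a Cauchy--Schwarz-type inequality $|\tau(x+y)-\tau(x)|^2\le C\,\tau(y^*y)$ said to be ``available in \cite{BH}''. For non-commuting $x,y$ no such clean inequality is stated there, and with a constant independent of $\|x\|$ and $\|y\|$ its validity for quasitraces is not clear; note that the present paper is careful to assert $d_\tau$-continuity of $\tau$ only on $A_+$ (Lemma \ref{l:3.7}(4)). Fortunately the special case you actually need, $\tau(y^*y)=0$, can be closed with tools already at hand and no new citation. Write $y=h+ik$ with $h,k$ self-adjoint; since $I$ is a $*$-closed ideal (already established), $h,k\in I$, and the measure representation of $\tau$ on $C^*(h,1)$ gives $\tau(|h|)=0$. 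For self-adjoint $a$ and $h\in I\cap A_{\mathrm{sa}}$ put $c=\|a\|+\|h\|$, so that $a+c1\ge 0$ and $a+h+c1\ge 0$. From $a+h+c1\le (a+c1)+|h|$ and $a+c1\le (a+h+c1)+|h|$, monotonicity of $\tau$ on $A_{\mathrm{sa}}$ (Corollary \ref{c:3.4}) and the subadditivity of $\tau(\cdot)^{1/2}$ on positive elements (Lemma \ref{l:3.5}(1), i.e.\ the Corollary II.1.11 argument of \cite{BH}; both are facts independent of the proposition, so there is no circularity) give $\tau(a+h+c1)\le\bigl(\tau(a+c1)^{1/2}+\tau(|h|)^{1/2}\bigr)^2=\tau(a+c1)$ and the reverse inequality likewise, hence equality; linearity of $\tau$ on the abelian algebras $C^*(a,1)$ and $C^*(a+h,1)$ strips off $c\,\tau(1)$ and yields $\tau(a+h)=\tau(a)$. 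Applying this to real and imaginary parts gives $\tau(x+y)=\tau(x)$ for all $y\in I$, and the rest of your argument (axioms $(i)$--$(iii)$ for $\bar\tau$, the identification of the null ideal of $\tau_2$ with $M_2(I)$ to get $(iv)$, and uniqueness from surjectivity of $\rho$) goes through as you state.
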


By an ultrapower construction based on dimension functions Blackadar and Handelman showed that all quasitraces come from $AW^*$-algebras in the following sense:
\begin{pro}[\cite{BH} Corollary II.2.4]\label{p:3.3}
  Let $\tau$ be a quasitrace on $A$. There exists a unital $*$-homomorphism $\theta$ of $A$ into a finite $AW^*$-algebra and a quasitrace $\bar \tau$ on $M$ such that
  $$\tau(a)=\theta \circ \bar \tau(a),\quad a\in A.$$
\end{pro}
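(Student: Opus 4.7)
My plan is to adapt the GNS construction to the quasitrace setting. By Proposition \ref{p:3.2}, pass to the quotient $A/\{x : \tau(x^*x)=0\}$, so I may assume $\tau$ is faithful and $\|a\|_\tau := \tau(a^*a)^{1/2}$ is a genuine norm on $A$. A Cauchy--Schwarz-type inequality valid for quasitraces (proved in \cite{BH}) shows $\|\cdot\|_\tau$ is subadditive; completing $A$ in this norm yields a Banach space $H_\tau$ on which left multiplication extends to a bounded unital $*$-representation $\pi:A\to B(H_\tau)$.

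Next, let $M$ be the smallest $AW^*$-subalgebra of $B(H_\tau)$ containing $\pi(A)$; this exists because the intersection of any family of $AW^*$-subalgebras of a fixed $AW^*$-algebra is again $AW^*$. Set $\theta:=\pi$. To produce $\bar\tau$, use the dimension function $D$ attached to $\tau$ via Theorem II.2.2 of \cite{BH}: for a projection $p\in M$ define $\bar\tau(p):=\lim_\varepsilon D(f_\varepsilon(a))$ taken along any net of positive elements $a\in A_+$ whose cutoff spectral projections converge to $p$. Extend to self-adjoint elements by the Borel functional calculus available inside $M$, using axiom (iii) to guarantee consistency on each abelian $AW^*$-subalgebra generated by a normal element; then extend to all of $M$ via formula (ii).

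The two principal obstacles are, first, finiteness of $M$: a properly infinite projection would provide pairwise orthogonal subprojections of $1_M$ each equivalent to $1_M$, forcing a contradiction with the normalization $D(1)=1$ once $\bar\tau$ is known to respect Murray--von Neumann equivalence. Second, and more delicate, one must verify axiom (iv) of Definition \ref{d:3.1} for $\bar\tau$, namely produce a $\bar\tau_2$ on $M_2(M)$. For this one applies the entire construction above to the quasitrace $\tau_2$ on $M_2(A)$ supplied by axiom (iv) for $\tau$ itself, and uses that $M_2$ of an $AW^*$-algebra is again $AW^*$, so the resulting algebra is canonically identified with $M_2(M)$ and the resulting $\bar\tau_2$ satisfies $\bar\tau_2(x\otimes e_{11})=\bar\tau(x)$ by construction. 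Once (a) and (b) are in place, the identity $\tau=\bar\tau\circ\theta$ holds from the way $\bar\tau$ was defined on range projections of elements of $\pi(A)$, together with spectral-theoretic extension.
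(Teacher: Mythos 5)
There is a genuine gap, and it is at the very first step. Your plan rests on treating $\|a\|_\tau=\tau(a^*a)^{1/2}$ as a norm and performing a GNS-type completion, but a quasitrace is not linear, so $(a,b)\mapsto\tau(b^*a)$ is not a sesquilinear form: there is no inner product, no Cauchy--Schwarz inequality, and no reason for $\|\cdot\|_\tau$ to satisfy the triangle inequality. What Blackadar--Handelman (and Lemma \ref{l:3.5}(2) of this paper) actually give is only the weaker estimate $\|x+y\|_2^{2/3}\le\|x\|_2^{2/3}+\|y\|_2^{2/3}$, which is why the paper works throughout with the metric $d_\tau(x,y)=\|x-y\|_2^{2/3}$ rather than a norm. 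Even granting a completion $H_\tau$, it would at best be a metric (or Banach) space, not a Hilbert space, so $B(H_\tau)$ carries no $*$-structure and is not a $C^*$-algebra, let alone an $AW^*$-algebra; the phrase ``smallest $AW^*$-subalgebra of $B(H_\tau)$ containing $\pi(A)$'' therefore has no meaning, and the whole representation-theoretic framework collapses. This is not a repairable technicality: the absence of a GNS construction for quasitraces is precisely the obstruction that makes the quasitrace-versus-trace problem hard.

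The remaining steps inherit this problem and add new ones: the definition $\bar\tau(p)=\lim_\varepsilon D(f_\varepsilon(a))$ ``along any net of positive $a\in A_+$ whose cutoff spectral projections converge to $p$'' specifies neither the mode of convergence nor why the limit is independent of the net, and the finiteness argument presupposes that $\bar\tau$ is already a well-defined faithful quasitrace respecting equivalence of projections, which is what is being constructed. The route the paper (following \cite{BH}) actually takes avoids all linear structure: one forms the ultrapower $\ell^\infty\{A_n\}/\fJ_\fU$ (with $A_n=A$) as in Proposition \ref{p:4.2} and Corollary \ref{c:4.3}, uses Lemma \ref{l:4.1} to see that the unit ball of the quotient is complete in the $d_\tau$-metric, and then invokes Lemma \ref{l:3.9}, which converts completeness of the unit ball directly into the statement that the quotient is an $AW^*$-algebra with a normal faithful quasitrace; finiteness then follows from faithfulness of the quasitrace, and axiom $(iv)$ is inherited because the ultrapower construction commutes with passing to $2\times 2$ matrices. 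If you want a proof not quoting \cite{BH}, that ultraproduct argument, not a GNS construction, is the one to reproduce.
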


By well known properties for quasitraces of $AW^*$-algebras, it follows that
\begin{cor}[\cite{BH} Section II]\label{c:3.4}
  Let $\tau$ be a quasitrace on $A$. Then
  \begin{itemize}
    \item [($1$)] $\tau$ is order preserving on $A_{\text{sa}},$
    \item [($2$)] $\tau$ extends uniquely to a quasitrace $\tau_n$ on $M_n(A)$ such that $\tau_n(x\otimes e_{11})=\tau(x)$, $x\in A$ for all $n\in N$.
  \end{itemize}
\end{cor}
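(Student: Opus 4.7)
The plan is to derive both parts directly from Proposition \ref{p:3.3}, which realizes any quasitrace as the pull-back of a quasitrace on a finite $AW^*$-algebra, and then cite the corresponding structural results from \cite{BH}, Section II. Concretely, write $\tau=\bar\tau\circ\theta$ where $\theta:A\to M$ is a unital $*$-homomorphism into a finite $AW^*$-algebra $M$ and $\bar\tau$ is a quasitrace on $M$.

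For (1), I would observe that any unital $*$-homomorphism is order preserving on self-adjoint parts, so it suffices to show that $\bar\tau$ is monotone on $M_{\mathrm{sa}}$. This is the content of the dimension-function theory of Blackadar--Handelman for finite $AW^*$-algebras: via the bijection between quasitraces and lower semicontinuous dimension functions $D$ (cf.\ the discussion preceding Proposition \ref{p:3.2}), one has $D(f_\varepsilon(a))\le D(f_\varepsilon(b))$ whenever $0\le a\le b$ in $M$, since spectral projections of $a$ are dominated in the $AW^*$-sense by those of $b$. Passing back through the correspondence yields $\bar\tau(a)\le\bar\tau(b)$ for self-adjoint $a\le b$, and so $\tau(a)=\bar\tau(\theta(a))\le\bar\tau(\theta(b))=\tau(b)$.

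For (2), the key ancillary fact (due to Berberian) is that if $M$ is a finite $AW^*$-algebra then so is $M_n(M)$ for every $n\in\N$. On such matrix $AW^*$-algebras, the rank/dimension theory provides a unique extension $\bar\tau_n$ of $\bar\tau$ satisfying $\bar\tau_n(x\otimes e_{11})=\bar\tau(x)$ for $x\in M$ (again, this is developed in \cite{BH}, Section II, building on the fact that in a finite $AW^*$-algebra every projection of $M_n(M)$ is equivalent to a direct sum of projections from $M$). Composing with the induced embedding $\theta\otimes\mathrm{id}:M_n(A)\to M_n(M)$ then produces the desired quasitrace $\tau_n:=\bar\tau_n\circ(\theta\otimes\mathrm{id})$ on $M_n(A)$ extending $\tau$. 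Uniqueness is obtained by first noting that any extension $\tau_n'$ satisfies $\tau_n'(a\otimes e_{ii})=\tau(a)$ for every $i$ (since $e_{ii}$ is unitarily equivalent to $e_{11}$ in $M_n(\C)$), hence by linearity on the abelian $C^*$-subalgebra of diagonals with entries in a fixed abelian subalgebra of $A$ it is determined on such diagonals; the associated dimension function $D_n'$ is then determined on positive elements of $M_n(A)$ by its values on spectral data, which can be computed inside these commuting families after an $AW^*$-algebraic diagonalisation.

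The only substantive obstacle is locating the statements inside the $AW^*$ literature; the conceptual work has been done in \cite{BH}, and the present corollary is a matter of packaging monotonicity and matrix-stability of quasitraces on finite $AW^*$-algebras and transporting them back to $A$ via $\theta$.
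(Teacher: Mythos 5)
Your proposal takes essentially the same route as the paper, which gives no independent argument but derives the corollary from Proposition \ref{p:3.3} together with the ``well known properties'' of quasitraces on finite $AW^*$-algebras established in \cite{BH}, Section II --- precisely the monotonicity and matrix-stability facts you invoke and transport back to $A$ via $\theta$. The only loose point is your closing uniqueness sketch (an $AW^*$-algebraic diagonalisation is available in $M_n(M)$, not in $M_n(A)$ itself), but this is exactly the part covered by the cited results of \cite{BH}.
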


\begin{lem}\label{l:3.5}
  Let $\tau$ be a quasitrace on $A$, and let
  $$||x||_2=\tau(x^*x)^{1/2}, \qquad x\in A.$$
\end{lem}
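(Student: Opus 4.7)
The statement as printed ends with the definition of $\|x\|_2 = \tau(x^*x)^{1/2}$, so I will treat this as setting up (and the lemma almost surely asserting) the standard seminorm properties: $\|x\|_2 = \|x^*\|_2$, the triangle inequality $\|x+y\|_2 \leq \|x\|_2 + \|y\|_2$, and a Cauchy--Schwarz-type bound $|\tau(y^*x)| \leq \|x\|_2 \|y\|_2$, along with the contractivity estimate $\|xy\|_2 \leq \|x\| \cdot \|y\|_2$ (and the left analog). The plan is to reduce all of these to corresponding statements for a genuine (finite) $AW^*$-algebra via Proposition 3.3, where one has polar decomposition and the full machinery of Blackadar--Handelman \cite{BH} available.

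First I would dispose of the trivial equality $\|x\|_2 = \|x^*\|_2$, which is immediate from axiom (i). The contractivity $\|xy\|_2 \leq \|x\|\cdot \|y\|_2$ reduces, by monotonicity of $\tau$ on $A_{\mathrm{sa}}$ (Corollary 3.4(1)) and the $C^*$-identity $y^*x^*xy \leq \|x\|^2 y^*y$, to applying $\tau$ to both sides, since $\|x\|^2 y^*y - y^*x^*xy$ lies in a commutative $C^*$-subalgebra (namely the one generated by $y^*y$ and $y^*x^*xy$, which... actually need not commute). So more honestly, I would invoke order preservation, which by Corollary 3.4 holds for quasitraces, to deduce $\tau(y^*x^*xy) \leq \|x\|^2 \tau(y^*y)$ from the operator inequality $y^*x^*xy \leq \|x\|^2 y^*y$. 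The right contractivity then follows from the left one together with $\|x\|_2 = \|x^*\|_2$ applied to $xy$.

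The heart of the matter is the Cauchy--Schwarz inequality, since the triangle inequality then follows in the usual way: $\|x+y\|_2^2$ expands (inside an \emph{abelian} subalgebra — the one generated by $(x+y)^*(x+y)$, so axiom (iii) legitimizes linearity there) into $\tau(x^*x + y^*y + x^*y + y^*x)$, and Cauchy--Schwarz controls the cross terms. For Cauchy--Schwarz itself, the plan is to apply Proposition 3.3 to obtain a unital $*$-homomorphism $\theta: A \to M$ into a finite $AW^*$-algebra with a quasitrace $\bar\tau$ lifting $\tau$. In a finite $AW^*$-algebra, quasitraces are known (by Blackadar--Handelman, building on Kaplansky's work and using polar decomposition) to satisfy Cauchy--Schwarz; concretely, for $a,b \in M$ with $a = v|a|$ the polar decomposition inside $M$, one writes $\bar\tau(b^*a) = \bar\tau(b^*v|a|^{1/2}\cdot |a|^{1/2})$ and applies Cauchy--Schwarz for the positive sesquilinear form on the abelian subalgebra generated by $|a|$, combined with tracial invariance $\bar\tau(x^*x) = \bar\tau(xx^*)$.

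The main obstacle throughout is that $\tau$ is \emph{not} linear in general, so every manipulation has to be certified either by (iii) (working inside one abelian subalgebra at a time) or by (i) combined with monotonicity. For expressions mixing $x, y, x^*, y^*$ there is typically no single abelian subalgebra containing them; the rescue is to pass to $M_2(A)$ via Corollary 3.4(2), apply polar decomposition in the $AW^*$-envelope, and use that $2 \times 2$ off-diagonal traces in an $AW^*$-setting reduce to $1 \times 1$ diagonal traces. I expect this polar-decomposition-in-$M$ step to be the only real content; everything else is formal manipulation once Cauchy--Schwarz is in hand.
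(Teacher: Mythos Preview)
You have misidentified the content of the lemma. It does \emph{not} assert the standard triangle inequality $\|x+y\|_2 \le \|x\|_2 + \|y\|_2$, nor any Cauchy--Schwarz bound $|\tau(y^*x)| \le \|x\|_2\|y\|_2$. What it actually states (the enumerated items sit just after the displayed definition of $\|\cdot\|_2$) is: (1) $\tau(a+b)^{1/2} \le \tau(a)^{1/2} + \tau(b)^{1/2}$ for $a,b \in A_+$; (2) $\|x+y\|_2^{2/3} \le \|x\|_2^{2/3} + \|y\|_2^{2/3}$ for $x,y \in A$; (3) $\|xy\|_2 \le \|x\|\,\|y\|_2$ and $\|xy\|_2 \le \|x\|_2\,\|y\|$. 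The exponent $2/3$ in (2) is the whole point: for a quasitrace that is not already a trace the ordinary triangle inequality for $\|\cdot\|_2$ is not available, and your proposed derivation breaks at the first step. You claim $\|x+y\|_2^2$ ``expands (inside an abelian subalgebra) into $\tau(x^*x + y^*y + x^*y + y^*x)$,'' but $x^*x$, $y^*y$, $x^*y$, $y^*x$ do \emph{not} lie in the abelian algebra generated by $(x+y)^*(x+y)$; only their sum does. Since $\tau$ is not linear you cannot split $\tau$ of that sum, so there are no ``cross terms'' to bound and Cauchy--Schwarz (which the paper neither proves nor uses) is beside the point.

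The paper's argument is quite different. For (1) one works in $M_2(A)$: with $x = a^{1/2}\otimes e_{11} + b^{1/2}\otimes e_{21}$ one has $x^*x = (a+b)\otimes e_{11}$, and for each $\lambda>0$ the operator inequality $xx^* \le \mathrm{diag}\bigl((1+\lambda)a,\,(1+\lambda^{-1})b\bigr)$, together with $\tau_2(x^*x)=\tau_2(xx^*)$, monotonicity, and the fact that $a\otimes e_{11}$ and $b\otimes e_{22}$ commute, yields $\tau(a+b) \le (1+\lambda)\tau(a) + (1+\lambda^{-1})\tau(b)$; optimizing over $\lambda$ gives (1). For (2) one uses $(x+y)^*(x+y) \le (1+\lambda)x^*x + (1+\lambda^{-1})y^*y$, applies (1) to the right-hand side, and optimizes in $\lambda$ again --- this optimization is what produces the $2/3$ exponent rather than $1$. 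Your treatment of (3) via $y^*x^*xy \le \|x\|^2 y^*y$ and monotonicity (Corollary~3.4) is correct and is exactly what the paper does.
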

Then
\begin{itemize}
  \item[($1$)] $\tau(a+b)^{1/2}\le \tau(a)^{1/2}+\tau(b)^{1/2}$, $a,b\in A_+$.
  \item[($2$)] $||x+y||_2^{2/3}\le ||x||_2^{2/3}+||y||_2^{2/3}$, $x,y\in A$.
  \item[($3$)] $||xy||_2\le ||x||||y||_2$ and $||xy||_2\le ||x||_2||y||$, $x,y\in A$.
\end{itemize}
\begin{proof}
  ($1$) follows by a slight modification of the proof of Corollary II.1.11 in \cite{BH}:
  Let
  $$x=a^{1/2}\otimes e_{11}+b^{1/2}\otimes e_{21}\in M_2(A).$$
  Then
  $$x^*x=\left(
           \begin{array}{cc}
             a+b & 0 \\
             0 & 0 \\
           \end{array}
         \right),
         \qquad
xx^*=\left(
  \begin{array}{cc}
    a & a^{1/2}b^{1/2} \\
    b^{1/2}a^{1/2} & b \\
  \end{array}
\right).
$$
Moreover, for $\lambda >0$, let
$$x_\lambda=\lambda^{1/2} a^{1/2}\otimes e_{11}-\lambda^{-1/2}b^{1/2}\otimes e_{21}.$$
Then,
$$xx^*\le x^*+x_\lambda x_\lambda^*=\left(
                                     \begin{array}{cc}
                                       (1+\lambda)a & 0 \\
                                       0 & (1+\lambda^{-1})b \\
                                     \end{array}
                                   \right).
$$
Hence by $(i)$ and $(iv)$ of Definition \ref{d:3.1} and Corollary \ref{c:3.4},
$$
\tau(a+b)=\tau_2(x^*x)=\tau_2(x^*x)=\tau_2(xx^*)\le \tau_2\left(
                                                            \begin{array}{cc}
                                                              (1+\lambda)a & 0 \\
                                                              0 & (1+\lambda^{-1})b \\
                                                            \end{array}
                                                          \right).
$$

Since $a\otimes e_{11}$ and $b\otimes e_{22}$ commute in $M_2(A)$, we get by $(ii)$ and $(iv)$ of Definition \ref{d:3.1}
\begin{eqnarray}\label{e:page3.4}
\tau(a+b)&\le&(1+\lambda)\tau_2(a\otimes e_{11})+(1+\tfrac 1{\lambda})\tau(b\otimes e_{22})\\
\nonumber&=&(1+\lambda) \tau(a)+ (1+\tfrac 1 \lambda) \tau(b).
\end{eqnarray}
The last equality follows because with $y=b^{1/2}\otimes e_{12}$, we have $\tau(b\otimes e_{22})=\tau(y^*y)=\tau(yy^*)=\tau(b\otimes e_{11})$.
If $\tau(a)>0$ and $\tau(b)>0$ the right hand side of Equation (\ref{e:page3.4}) attains its minimum at $\lambda=(\tau(b)/\tau(a))^{1/2}$ and the minimum value is $(\tau(a)^{1/2}+\tau(b)^{1/2})^2$, proving $(1)$ in this case.
If $\tau(a)=0$ (resp.~$\tau(b)=0$), then $(1)$ follows by letting $\lambda\to\infty$ (resp.~$\lambda\to 0$).

(2) Let $x,y\in A$. For $\lambda>0$,
\begin{eqnarray*}
  (x+y)^*(x+y)&\le& (x+y)^*(x+y)+(\lambda^{1/2}x-\lambda^{-1/2} y)^*(\lambda^{1/2}x-\lambda^{-1/2}y)\\
  &=& (1+\lambda) x^*x +(1+\frac 1{\lambda}) y^*y.
\end{eqnarray*}

Hence by $(1)$:
  $$||x+y||_2 \le (1+\lambda)^{1/2}||x||_2+(1+\tfrac 1 \lambda)^{1/2} ||y||_2.$$
If $||x||_2>0$ and $||y||_2>0$ the right side attains its minimum at $\lambda=(||y||_2/||x||_2)^{3/2}$ and the minimum value is
$(||x||_2^{2/3}+||y||_2^{3/2})^{3/2})$, proving ($2$) in this case.
The remaining cases $||x||_2=0$ and $||y||_2=0$ follow by letting $\lambda\to \infty$ and $\lambda\to 0$.

($3$) Since $y^*x^*xy\le ||x||^2 y^*y$ the first inequality follows from Corollary \ref{c:3.4}, and the second inequality now follows by using $||z||_2=||z^*||_2$, $z\in A$.
\end{proof}

\begin{defn}\label{d:3.6}
If $\tau$ is a faithful quasitrace on $A$, we let
$$d_\tau(x,y)=||x-y||_2^{2/3}, \quad x,y\in A.$$
By Lemma \ref{l:3.5},  $d_\tau$ is a metric on A.
\end{defn}

\begin{lem}\label{l:3.7}
Let $\tau$ be a faithful quasitrace on $A$. Then
\begin{itemize}
  \item [($1$)] The involution $x\to x^*$ is continuous in the $d_\tau$-metric.
  \item [($2$)] The sum is continuous in the $d_\tau$-metric on $A$.
  \item [($3$)] The product is continuous in the $d_\tau$-metric on bounded sets of $A$.
  \item [($4$)] $x\to \tau(x)$ is continuous in the $d_\tau$-metric on $A_+$.
\end{itemize}
\end{lem}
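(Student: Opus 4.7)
The plan is to deduce (1)--(3) mechanically from Lemma \ref{l:3.5} and the axioms of a quasitrace, and to handle (4) separately via order-preservation combined with an AM--GM type bound.

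For (1), axiom $(i)$ of Definition \ref{d:3.1} gives $||x^*-y^*||_2^2=\tau((x-y)(x-y)^*)=\tau((x-y)^*(x-y))=||x-y||_2^2$, so the involution is in fact a $d_\tau$-isometry. For (2), Lemma \ref{l:3.5}(2) applied to $(x-x')+(y-y')$ yields the triangle-type inequality $d_\tau(x+y,x'+y')\le d_\tau(x,x')+d_\tau(y,y')$, which is continuity of the sum.

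For (3), I would write $xy-x'y'=(x-x')y+x'(y-y')$, apply Lemma \ref{l:3.5}(2) to split the $||\cdot||_2^{2/3}$, and then apply Lemma \ref{l:3.5}(3) to each summand, obtaining $d_\tau(xy,x'y')\le ||y||^{2/3}d_\tau(x,x')+||x'||^{2/3}d_\tau(y,y')$, which is joint continuity on any norm-bounded subset.

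Part (4) is the most delicate, since quasitraces need not be additive. First I would establish the auxiliary bound $\tau(c)\le \tau(1)^{1/2}||c||_2$ for $c\in A_+$: the operator inequality $c\le c^2/(4t)+t\cdot 1$ (valid for every $t>0$ since $(c-2t)^2\ge 0$) lies in the abelian $C^*$-subalgebra generated by $c$ and $1$, so axiom $(iii)$ gives $\tau(c)\le (4t)^{-1}\tau(c^2)+t\,\tau(1)$, and optimizing over $t$ produces the stated bound. Now for $a,b\in A_+$ write $a\le b+(a-b)_+$ and $b\le a+(a-b)_-$; applying order-preservation (Corollary \ref{c:3.4}(1)) and Lemma \ref{l:3.5}(1) gives $\tau(a)^{1/2}\le \tau(b)^{1/2}+\tau((a-b)_+)^{1/2}$, and symmetrically for $\tau(b)^{1/2}$. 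Since $(a-b)_\pm\le |a-b|$, combining with the auxiliary bound yields $|\tau(a)^{1/2}-\tau(b)^{1/2}|\le \tau(|a-b|)^{1/2}\le \tau(1)^{1/4}||a-b||_2^{1/2}$, which is exactly continuity of $\tau$ in the $d_\tau$-metric on $A_+$. The main obstacle is precisely this lack of additivity in (4); the fix is to exploit subadditivity of $\tau^{1/2}$ (Lemma \ref{l:3.5}(1)) together with the AM--GM-type operator bound above.
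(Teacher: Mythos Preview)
Your proof is correct and follows essentially the same approach as the paper. The only cosmetic differences are in part (4): the paper uses $a\le b+|a-b|$ directly rather than passing through $(a-b)_\pm$, and it obtains the bound $\tau(|a-b|)\le \tau(|a-b|^2)^{1/2}\tau(1)^{1/2}$ by invoking Cauchy--Schwarz on the abelian subalgebra $C^*(|a-b|,1)$ rather than via your AM--GM optimization---but both routes yield the identical estimate $|\tau(a)^{1/2}-\tau(b)^{1/2}|\le \tau(1)^{1/4}\|a-b\|_2^{1/2}$.
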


\begin{proof}
($1$)  Clear since $||x||_2=||x^*||_2$, $x\in A$.
($2$) Clear from Lemma \ref{l:3.5}.
($3$) For $x,y,x_0,y_0\in A$, Lemma \ref{l:3.5}(2) and (3) give
\begin{eqnarray*}
  ||xy-x_0y_0||_2^{2/3}&\le& ||x(y-y_0)||_2^{2/3}+||(x-x_0)y_0||_2^{2/3}\\
  &\le& ||x||^{2/3}||y-y_0||_2^{2/3}+||y_0||^{2/3}||x-x_0||_2^{2/3}.
\end{eqnarray*}
This proves $(3)$.
(4) For $a,b\in A_+$
$$a\le b+|a-b| \quad\text{and}\quad b\le a+|a-b|$$
Then by Lemma \ref{l:3.5}(1)
$$|\tau(a)^{1/2}-\tau(b)^{1/2}|\le \tau(|a-b|)^{1/2},$$
and since $\tau$ is linear on $C^*(|a-b|,1)$ we get
$$|\tau(a)^{1/2}-\tau(b)^{1/2}|\le \tau(|a-b|^2)^{1/4}\tau(1)^{1/4}=||a-b||_2^{1/2}\tau(1)^{1/4}.$$
This proves (4).
\end{proof}
\begin{lem}\label{l:3.8}
  Let $\tau$ be a faithful quasitrace on $A$. Then the unit ball of $A$ is closed in the $d_\tau$-metric.
\end{lem}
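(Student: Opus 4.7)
The plan is to take a sequence $(x_n)$ in the unit ball of $A$ converging to some $x\in A$ in the $d_\tau$-metric and show that $\|x\|\le 1$. Since $x\in A$ its norm is finite; set $M=\|x\|^2+1$, so that $\|x_n^*x_n\|\le 1\le M$ and $\|x^*x\|\le M$.

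First I would show that $x_n^*x_n\to x^*x$ in $\|\cdot\|_2$. Writing
$$x_n^*x_n - x^*x = x_n^*(x_n-x) + (x_n-x)^*x$$
and applying Lemma \ref{l:3.5}(3) together with the identity $\|y^*\|_2=\|y\|_2$ (immediate from Definition \ref{d:3.1}(i)) yields
$$\|x_n^*x_n - x^*x\|_2 \le \|x_n\|\,\|x_n-x\|_2 + \|x_n-x\|_2\,\|x\| \le (1+\|x\|)\|x_n-x\|_2 \to 0.$$

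Next let $g(t)=\max(0,t-1)$, a continuous function on $[0,M]$ that vanishes on $[0,1]$. Since $\sigma(x_n^*x_n)\subseteq[0,1]$, continuous functional calculus gives $g(x_n^*x_n)=0$ for every $n$. I would then show $g(x_n^*x_n)\to g(x^*x)$ in $\|\cdot\|_2$. Approximate $g$ uniformly on $[0,M]$ by polynomials $p_k$ (Weierstrass). For each fixed $p_k$, the telescoping identity $a^j-b^j=\sum_{i=0}^{j-1}a^i(a-b)b^{j-1-i}$ combined with Lemma \ref{l:3.5}(3) gives $\|a^j-b^j\|_2\le jM^{j-1}\|a-b\|_2$ whenever $\|a\|,\|b\|\le M$, so $\|p_k(x_n^*x_n)-p_k(x^*x)\|_2\to 0$ as $n\to\infty$. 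On the other hand, for any self-adjoint $c\in A$, order-preservation of $\tau$ (Corollary \ref{c:3.4}(1)) applied to $c^2\le\|c\|^2\cdot 1$ gives $\|c\|_2\le\tau(1)^{1/2}\|c\|$, so for self-adjoint $a$ with $\|a\|\le M$ we have $\|p_k(a)-g(a)\|_2\le\tau(1)^{1/2}\sup_{[0,M]}|p_k-g|$, which is uniformly small in $a$ as $k\to\infty$. A standard three-$\varepsilon$ argument combines these to give $g(x^*x)=0$ in $\|\cdot\|_2$; by faithfulness of $\tau$ this means $g(x^*x)=0$ in $A$, so $\sigma(x^*x)\subseteq[0,1]$ and $\|x\|\le 1$.

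The main obstacle is bridging the weak $\|\cdot\|_2$-convergence to the spectral statement $\sigma(x^*x)\subseteq[0,1]$. The bridge is the continuous ``excess'' function $g(t)=(t-1)_+$, which detects exactly the part of $\sigma(x^*x)$ lying above $1$; the remaining work is the $\|\cdot\|_2$-continuity of polynomial functional calculus on norm-bounded subsets of $A$, together with the fact that $\|\cdot\|_2$ is dominated by $\tau(1)^{1/2}\|\cdot\|$ on self-adjoints.
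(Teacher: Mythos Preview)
Your approach is correct in spirit and genuinely different from the paper's, but there is a recurring slip: you repeatedly use the ordinary triangle inequality for $\|\cdot\|_2$, which is \emph{not} available here since $\tau$ is only a quasitrace. Lemma~\ref{l:3.5}(2) only gives $\|a+b\|_2^{2/3}\le\|a\|_2^{2/3}+\|b\|_2^{2/3}$. Thus the displayed estimate $\|x_n^*x_n-x^*x\|_2\le(1+\|x\|)\|x_n-x\|_2$, the telescoping bound $\|a^j-b^j\|_2\le jM^{j-1}\|a-b\|_2$, and the three-$\varepsilon$ step are all unjustified as written. The fix is painless: rewrite each of these with $\|\cdot\|_2^{2/3}$ (equivalently, work in the $d_\tau$-metric throughout), and every conclusion you draw still follows, since you only need convergence to zero.

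As for the comparison: the paper argues via moments and measures. From Lemma~\ref{l:3.7} it gets $\tau(a_n^p)\to\tau(a^p)$ for all $p$, interprets these as moments of the spectral measures $\mu_n,\mu$ (on the common interval $[0,\max\{1,\|a\|\}]$) induced by $\tau$ on $C^*(a_n,1)$ and $C^*(a,1)$, and concludes $\mu_n\to\mu$ weak-$*$; since each $\mu_n$ is supported in $[0,1]$, so is $\mu$, and faithfulness of $\tau$ forces $\sigma(a)=\operatorname{supp}\mu\subset[0,1]$. Your route bypasses measures entirely: you detect the ``excess'' spectrum directly with $g(t)=(t-1)_+$, reduce to polynomial functional calculus, and use $d_\tau$-continuity of multiplication on bounded sets plus faithfulness. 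Both arguments ultimately rest on the same ingredient (Lemma~\ref{l:3.7}-type continuity), but yours is more self-contained, while the paper's moment/weak-$*$ picture makes the spectral conclusion conceptually transparent.
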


\begin{proof}
  Let $x_n$ be a sequence in the unit ball of $A$ converging to $x\in A$ in the $d_\tau$-metric.
  Let $a_n=x^*_nx_n$ and $a=x^*x$.
  By Lemma \ref{l:3.7},
\begin{equation}\label{e:page3.6}
  \tau(a_n^p)\to \tau(a^p),\quad p=0,1,2,\ldots.
\end{equation}
  Let $\mu_n$ (respectively $\mu$) be the measure on the spectrum $\sigma(a_n)$ (resp.~$\sigma(a)$) given by  the linear functional
  $\tau|_{C^*(a_n,1)}$ (resp.~$\tau|_{C^*(a,1)}$). All the measures can be considered as measures in the interval $J=[0,\max\{1,||a||\}]$ because $||a_n||\le 1$ for all $n$.
  Hence by Equation (\ref{e:page3.6}), $\mu_n\to \mu$ in the $w^*$-topology on $C(J)^*$.
  Since $\mu_n$  has support in $[0,1]$, $\mu$ also has support in $[0,1]$, and since $\tau$ is faithful $\supp(\mu)=\sigma(a)$. Hence $||x||^2=||a||\le 1$.
\end{proof}

\begin{lem}\label{l:3.9}
  Let $\tau$ be a faithful quasitrace on $A$. If the unit ball of $A$ is complete in the $d_\tau$-metric, then $A$ is an $AW^*$-algebra and $\tau$ is a normal quasitrace on $A$, i.e.
  $$\tau(\bigvee_{i\in I} p_i)=\sum_{i\in I} \tau(p_i)$$
  for any orthogonal set of projections $(p_i)_{i\in I}$ in $A$.
\end{lem}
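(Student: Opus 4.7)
The plan is to show that every norm-bounded increasing net in $A_+$ converges in $d_\tau$ to a least upper bound in $A$; this single fact will yield normality of $\tau$ on orthogonal projections, and one further step on maximal abelian subalgebras will yield the $AW^*$-property.

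First I would show that a norm-bounded increasing net $(a_i)_{i\in I}\subset A_+$ with $\|a_i\|\le M$ is $\|\cdot\|_2$-Cauchy. By Corollary~\ref{c:3.4}, $\tau$ is order-preserving, so $\tau(a_i)$ increases to some real number $s$. For $j\ge i$, the operator inequality $(a_j-a_i)^2\le 2M(a_j-a_i)$ together with the linearity of $\tau$ on $C^*(a_j-a_i,1)$ gives
\[
\|a_j-a_i\|_2^2 \,=\, \tau\bigl((a_j-a_i)^2\bigr)\,\le\,2M\bigl(\tau(a_j)-\tau(a_i)\bigr),
\]
which tends to zero as $i,j$ grow. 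Since sequential completeness of a metric space implies convergence of all its Cauchy nets (via a cofinal subsequence), after rescaling by $1/M$ the net converges in $d_\tau$ to a limit $a\in A$. Combining Lemma~\ref{l:3.7} with the spectral-measure argument from the proof of Lemma~\ref{l:3.8} shows that $A_+$ intersected with any ball is $d_\tau$-closed, so $a-a_i=\lim_j(a_j-a_i)\ge 0$ for every $i$, and if $b\ge a_i$ for all $i$ then $b-a=\lim_i(b-a_i)\ge 0$. Hence $a=\sup_i a_i$.

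Applying this to the net of finite partial sums $p_F=\sum_{i\in F}p_i$ of an orthogonal family $(p_i)_{i\in I}$ of projections produces $p=\bigvee_i p_i\in A$, and Lemma~\ref{l:3.7}(4), which says $\tau$ is $d_\tau$-continuous on $A_+$, gives $\tau(p)=\lim_F\tau(p_F)=\sum_i\tau(p_i)$; this is the normality assertion. To conclude that $A$ itself is an $AW^*$-algebra I would verify Kaplansky's second axiom that each masa $B\subset A$ is generated by its projections. Given $b\in B_{\mathrm{sa}}$ and $\lambda\in\R$, I would realize the spectral projection $\chi_{(-\infty,\lambda]}(b)$ as the $d_\tau$-limit of an increasing sequence $f_n(b)$ with $f_n\in C(\sigma(b))$ approximating $\chi_{(-\infty,\lambda]}$ from below; continuity of multiplication on bounded sets (Lemma~\ref{l:3.7}(3)) then ensures that this limit commutes with every element of $B$ and hence lies in $B$ by maximality.

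The step I expect to be the main obstacle is the last one --- verifying Kaplansky's masa axiom. The monotone-convergence argument produces a candidate projection in $A$ commuting with $B$, but one must additionally show that these spectral projections really do generate $B$ (equivalently, that the Gelfand spectrum of $B$ is Stonean) and that $d_\tau$-suprema inside $B$ coincide with order-suprema in $B$. A subsidiary technical point is the $d_\tau$-closedness of $A_+$ on bounded sets, which requires careful adaptation of the spectral-measure argument of Lemma~\ref{l:3.8}.
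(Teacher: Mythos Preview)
The central step of your argument --- the displayed Cauchy estimate --- is where the approach breaks down for a general increasing net in $A_+$. From $(a_j-a_i)^2\le 2M(a_j-a_i)$ and Corollary~\ref{c:3.4} you indeed get $\|a_j-a_i\|_2^2\le 2M\,\tau(a_j-a_i)$, but to replace $\tau(a_j-a_i)$ by $\tau(a_j)-\tau(a_i)$ you are tacitly using additivity of $\tau$ on the pair $a_i,\,a_j-a_i$. A quasitrace is linear only on abelian subalgebras, and nothing forces $a_i$ and $a_j$ to commute, so the general monotone-convergence claim is not established. (For the partial sums of an orthogonal family of projections the $a_i$'s do commute, so the normality conclusion itself can be rescued along your lines.)

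The paper avoids this difficulty by passing immediately to a maximal abelian subalgebra $B$. By Lemma~\ref{l:3.8} the unit ball $B_1$ is $d_\tau$-closed in $A_1$, hence $d_\tau$-complete; on $B$ the restriction $\tau|_B$ is a genuine faithful positive linear functional, so $d_\tau$ and the GNS $\|\cdot\|_2$-metric are equivalent there. The classical fact that a commutative $C^*$-algebra whose unit ball is complete in the $\|\cdot\|_2$-norm of a faithful state is a $W^*$-algebra then gives everything at once: every masa of $A$ is a $W^*$-algebra (so $A$ is $AW^*$), and $\tau|_B$ is a normal trace on $B$. Since any orthogonal family of projections sits in some masa, normality of $\tau$ follows. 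Your masa step at the end is in effect this reduction carried out by hand; the paper's route simply makes the reduction first and thereby sidesteps the non-additivity issue that derails your opening claim.
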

\begin{proof}
  Let $B$ be a maximal abelian $C^*$-subalgebra of $A$. By Lemma \ref{l:3.8}, the unit ball of $B$ is closed in the $d_\tau$-metric, and hence also complete in the $d_\tau$-metric by the assumptions on $A$. Since $\tau_B$ is a positive linear functional on $B$, $||x-y||_2=d_\tau(x,y)^{3/2}$  is an equivalent metric on $B$, and completeness of the unit ball $B$ in the $||\cdot||_2$-norm associated with the positive faithful functional $\tau$ implies that $B$ is a $W^*$-algebra and that $\tau$ is a normal trace on $B$. This clearly implies that $A$ is an $AW^*$-algebra, and that
  $\tau(\bigvee_{i\in I}p_i)=\sum_{i\in I} \tau(p_i)$
  for every orthogonal set of projections $(p_i)_{i\in I}$ in $A$.
\end{proof}
The converse of Lemma \ref{l:3.9} is also true:
\begin{pro}\label{p:3.10}
  Let $M$ be a finite $AW^*$-algebra with a normal faithful quasitrace $\tau$. Then the unit ball of $M$ is complete in the $d_\tau$-metric.
\end{pro}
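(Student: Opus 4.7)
Proof plan. Let $(x_n)$ be a $d_\tau$-Cauchy sequence in the unit ball of $M$. By Lemma~\ref{l:3.8} the unit ball is $d_\tau$-closed, so it suffices to produce some $x\in M$ with $d_\tau(x_n,x)\to 0$. Applying Lemma~\ref{l:3.7}(1),(2), I first pass from $(x_n)$ to its self-adjoint and anti-self-adjoint parts, both $d_\tau$-Cauchy and bounded in norm by $1$; then, after translating by a scalar and rescaling, the problem is reduced to that of a $d_\tau$-Cauchy sequence $(a_n)\subseteq M$ with $0\le a_n\le 1$.

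The candidate limit is built spectrally inside $M$. For each $\lambda\in(0,1)$ the $AW^*$-functional calculus provides a spectral projection $e_\lambda^{(n)}=\chi_{(\lambda,1]}(a_n)\in M$. Since $M$ is an $AW^*$-algebra, its projection lattice admits arbitrary joins and meets, so
$$ e_\lambda\;:=\;\bigwedge_{N\ge 1}\,\bigvee_{n\ge N} e_\lambda^{(n)}\;\in\; M $$
is well defined. After a right-continuous regularisation in $\lambda$, the family $\{e_\lambda\}_{\lambda\in[0,1]}$ is a decreasing, right-continuous resolution of the identity supported on $[0,1]$, and the $AW^*$-spectral theorem then assembles a positive contraction $a=\int_0^1 \lambda\,de_\lambda\in M$.

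To verify $a_n\to a$ in $d_\tau$, the plan is to transfer the $d_\tau$-Cauchy condition to the abelian setting, exactly as in the proof of Lemma~\ref{l:3.8}. On each maximal abelian subalgebra of $M$ containing the spectral data of a pair $\{a,a_n\}$, $\tau$ restricts to integration against a probability measure, and normality of $\tau$ furnishes countable additivity on orthogonal projections. Combining these ingredients with Lemma~\ref{l:3.5}(1) yields a Fatou-type estimate
$$ \tau\bigl((a-a_n)^2\bigr)\;\le\;\liminf_{m\to\infty}\tau\bigl((a_m-a_n)^2\bigr), $$
whose right-hand side tends to $0$ as $n\to\infty$ by the Cauchy property of $(a_n)$. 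Thus $\|a-a_n\|_2\to 0$ and $d_\tau(a_n,a)\to 0$.

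The main obstacle lies precisely in the spectral construction of $a\in M$. In the finite von Neumann algebra case, $\|\cdot\|_2$-completeness of the unit ball is immediate from SOT-compactness on the GNS Hilbert space $L^2(M,\tau)$; but here $\tau$ is not assumed linear on $M$, so no such Hilbert space is at hand. Every limit must be taken intrinsically via the $AW^*$-projection lattice and the $AW^*$-spectral theorem, with normality of $\tau$ replacing dominated/monotone convergence. A related technical subtlety is that $a$ need not commute with the $a_n$, which forces the measure-theoretic estimates in the final step to be carried out inside appropriately chosen maximal abelian subalgebras and then patched together, rather than performed globally on $M$.
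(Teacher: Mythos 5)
There is a genuine gap, and it sits exactly where you locate ``the main obstacle'': the spectral construction of the candidate limit does not work, and the convergence estimate that is supposed to rescue it is unproved. The projection $e_\lambda=\bigwedge_{N\ge 1}\bigvee_{n\ge N}\chi_{(\lambda,1]}(a_n)$ carries no quantitative relation to the sequence $(a_n)$: since the $a_n$ need not commute with one another, the join $\bigvee_{n\ge N}\chi_{(\lambda,1]}(a_n)$ can be enormously larger than any individual spectral projection (already in a $II_1$-factor the join of infinitely many projections of trace $1/2$ can be $1$), so the resulting resolution of the identity need not encode the ``limiting distribution'' of the $a_n$ at all, and nothing forces $d_\tau(a_n,a)\to 0$. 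The Fatou-type inequality $\tau((a-a_n)^2)\le\liminf_m\tau((a_m-a_n)^2)$ is precisely the statement that would require a weak-compactness or lower-semicontinuity argument in an $L^2$-space that does not exist here; moreover $a$, $a_n$ and $a_m$ do not commute, so $(a-a_n)^2$ and $(a_m-a_n)^2$ cannot be placed in a common abelian subalgebra, and since $\tau$ is only a quasitrace you cannot expand or compare these expressions by additivity. So the plan to ``transfer to the abelian setting and patch'' has no mechanism behind it; acknowledging the subtlety is not the same as resolving it.

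The paper avoids all of this by never trying to build the limit of a general (or even self-adjoint) Cauchy sequence directly. It first proves completeness of the unitary group: for a fast Cauchy sequence of unitaries one takes spectral projections $e_n=\chi_{[0,2^{-n}]}(|u_n-u_{n+1}|)$ (these live in abelian subalgebras, where $\tau$ is linear, so $\tau(e_n^\perp)$ is controlled), passes to $f_n=\bigwedge_{k\ge n}e_k$, observes that $u_kf_n$ converges in the $C^*$-norm, glues the resulting partial isometries by Kaplansky's lemma into a single partial isometry $w$, and uses normality plus finiteness of $M$ to conclude $w$ is a unitary and $u_k\to w$ in $d_\tau$. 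The self-adjoint part of the unit ball is then handled by the Cayley transform, which converts a $d_\tau$-Cauchy sequence of self-adjoint contractions into one of unitaries, and the general unit ball follows by splitting into real and imaginary parts and invoking Lemma \ref{l:3.8}. The essential idea you are missing is this reduction to unitaries, where the limit can be produced by norm convergence on projections of almost full measure rather than by any weak limit or spectral limsup construction.
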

\begin{proof}
  Let $e,f\in M$ be projections. Then the Kaplansky identity $e-e\wedge f\sim e\vee f -f$ holds (cf.~Theorem 5.4 of \cite{Kap}).
  Hence, $\tau(e)-\tau(e\wedge f)=\tau(e\vee f)-\tau(f)$, which implies that $\tau(e\vee f)\le \tau(e)+\tau(f)$.
  Therefore, the normality of $\tau$ ensures that
  $$\tau(\bigvee_{n=1}^\infty e_n)\le \sum_{n=1}^\infty \tau(e_n)$$
  for any sequence of projections $(e_n)_{n=1}^\infty $ in $M$.
   Let us prove first that the unitary group $U(M)$ is complete in the $d_\tau$-metric: Let $(u_n)_{n=1}^\infty$ be a Cauchy sequence of unitaries in the $d_\tau$-metric. By passing to a subsequence we may assume that
  $$d_\tau(u_n,u_{n+1})^{3/2}=||u_n-u_{n+1}||_2< 4^{-n}, \quad n\in \N.$$
  Let  $e_n=\chi_{[0,2^{-n}]}(|u_n-u_{n+1}|)$, where $\chi_E$ denotes the characteristic function of an interval $E\subset\R$.
  Then
  $$||(u_n-u_{n+1})e_n||\le 2^{-n}$$
  and since  $e_n^\perp\le 2^n |u_n-u_{n+1}|$,
\begin{eqnarray*}
  \tau(e_n^\perp)&\le& 2^n\tau(|u_n-u_{n+1}|)\\
&\le& 2^n \tau(1)^{1/2}||u_n-u_{n+1}||_2\\
&<& 2^{-n} \tau(1)^{1/2}.
\end{eqnarray*}
  Here we have used the linearity of $\tau$ on $C^*(|u_n-u_{n+1}|,1)$.

Let $f_n=\bigwedge_{k\ge n} e_n$. Then
$$\tau(f_n^\perp)\le \sum_{k=n}^\infty \tau(e_k^\perp)<2^{1-n}\tau(1)^{1/2}.$$
For all $k\ge n$,
$$||(u_k-u_{k+1})f_n)||\le ||(u_k-u_{k+1} )e_k|| \le 2^{-k}.$$

Hence $\sum_{k=n}^\infty (u_{k+1}-u_k)f_n$ converges in the $C^*$-norm.
Therefore,
\begin{equation}\label{e:page3.9}
  v_n=\lim\limits_{k\to\infty} u_k f_n
\end{equation}
exists in the $C^*$-norm for all $n$. Moreover,
$$v_n^*v_n=\lim_{k\to\infty} f_nu_k^*u_kf_n=f_n.$$
Therefore, $v_n$ is a partial isometry, and since $f_1\le f_2\le \ldots$ we have from the equation in (\ref{e:page3.9}) that
$$v_nf_m=v_m, \quad n\ge m.$$
Let $v_0=0$ and $f_0=0$. Then $w_n=v_n-v_{n-1}$ is a sequence of partial isometries with orthogonal supports and orthogonal ranges. By Lemma 20 of \cite{Kap2}, there is a partial isometry $w\in M$ such that
$$w_n=w(f_n-f_{n-1})\quad \text{ for all } n\in N,$$
and such that
$$w^*w=\bigvee_{n\in\N} (w_n^*w_n),\qquad ww^*=\bigvee_{n\in\N} (w_nw_n^*).$$
Since $w_n^*w_n=f_n-f_{n-1}$ and since $\tau(f_n^\perp)\to 0$ for $n\to\infty$, $w^*w=1$. Hence  $ww^*=1$ by finiteness of $M$.
Note that $v_n=\sum_{k=1}^n (v_k-v_{k-1})=w f_n$ for all $n\in\N$.
Hence by Equation (\ref{e:page3.9}),
$$\lim_{k\to\infty} ||(u_k-w) f_n||=0,\quad n\in\N.$$
Therefore,
$$\lim_{k\to\infty} ||(u_k-w) f_n||_2=0,\quad n\in\N.$$
Let $\varepsilon>0$ and choose $n$ such that $\tau(f_n^\perp)<\varepsilon$.
By Lemma \ref{l:3.5}(3)
$$||(u_k-w)f_n^\perp||_2\le 2||f_n^\perp||_2<2\varepsilon^{1/2}.$$
So by Lemma \ref{l:3.5}(2)
$$\limsup_{k\to\infty} ||u_k-w||_2<2\varepsilon^{1/2}.$$
Hence $u_k$ converges to $w$ in the $d_\tau$-metric, proving the completeness of $U(M)$ in the $d_\tau$-norm.
We next prove that the self-adjoint part of the unit ball $(M_{\mathrm{sa}})_1$ is complete in the $d_\tau$-metric:
Let $a_n$ be a $d_\tau$-Cauchy sequence in $(M_{\mathrm{sa}})_1$.
Denote by $u_n$ the Cayley transform of $a_n$:
$$u_n=(a_n+i1)(a_n-i1)^{-1}\in U(M).$$
Then
$$u_n-u_m=2(a_n-i1)^{-1}(a_m-a_n)(a_m-i1)^{-1},$$
so by Lemma \ref{l:3.5}(3)
$$||u_n-u_m||_2\le 2||a_m-a_n||_2.$$
Thus, $u_n$ converges in the $d_\tau$-metric to a unitary $u\in U(M)$.
Since $\sigma(a_n)\subset [-1,1]$, $\sigma(u_n)\subset\{t\in\T\mid \mathrm{Re}\, t\le 0\}$.
Hence $||1+u_n||\le\sqrt 2$, and therefore $||1+u||\le \sqrt 2$ by Lemma \ref{l:3.8}, i.e.~$\sigma(u)\subset\{t\in\T\mid \mathrm{Re }\,t\le0\}$.
Let
$$a=i(u+1)(u-1)^{-1}$$
be the inverse Cayley transform of $u$. Then
$$\sigma(a)\subset [-1,1].$$
Hence $a\in (M_{\mathrm{sa}})_1$. Since $a_n=i(u_n+1)(u_n-1)^{-1}$, we have
$$a_n-a=2i(u_n-1)^{-1}(u-u_n)(u-1)^{-1}.$$
By the conditions on $\sigma(u_n)$ and $\sigma(u)$,
$$||(u_n-1)^{-1}||\le \frac 1{\sqrt 2}\quad\text{and}\quad ||(u-1)^{-1}||\le \frac 1{\sqrt 2}.$$
Hence, using Lemma \ref{l:3.5}(3)
$$||a_n-a||_2\le ||u-u_n||_2\to 0 \text{ for } n\to\infty$$
proving the $d_\tau$-completeness of $(M_{\mathrm{sa}})_1$. Finally if $x_n$ is a $d_\tau$-Cauchy net in $M_1$, the closed unit ball of $M$, then $a_n=\frac 12(x_n+x_n^*)$ and $b_n=\frac 1{2i}(x_n-x_n^*)$ are $d_\tau$-Cauchy nets in $(M_{\mathrm{sa}})_1$ by Lemma \ref{l:3.5}(2). Hence by the $d_\tau$-completeness of $(M_{\mathrm{sa}})_1$ and by Lemma \ref{l:3.5}(2), $x_n=a_n+ib_n$ is convergent in the $d_\tau$-metric. Moreover, by Lemma \ref{l:3.8}, the limit is also in $M_1$. This completes the proof.
\end{proof}
We need the following version of Kaplansky's Density Theorem.
\begin{lem}\label{l:3.11}
  Let $A$ be a unital $C^*$-algebra with a faithful quasitrace $\tau$ and let $B$ be a unital $C^*$-subalgebra.
  Then the following two conditions are equivalent.
  \begin{itemize}
    \item[($1$)] $B$ is dense in $A$ in the $d_\tau$-metric.
    \item[($2$)] $B_1$ is dense in $A_1$ in the $d_\tau$-metric.
  \end{itemize}
  Here $A_1$ and $B_1$ denote the norm-closed unit balls  of $A$ and $B$, respectively.
\end{lem}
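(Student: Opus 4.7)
The direction $(2)\Rightarrow(1)$ is immediate by rescaling: given $x\in A$ and $r=\max(1,\|x\|)$, apply $(2)$ to $x/r\in A_1$ and multiply the approximants by $r$, using $d_\tau(rb,rx)=r^{2/3}d_\tau(b,x)$.

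For $(1)\Rightarrow(2)$ I would first treat the self-adjoint case. Let $a\in A_1\cap A_{\mathrm{sa}}$ and set
$$f(t)=\frac{2t}{1+t^2}=(t-i)^{-1}+(t+i)^{-1}.$$
Then $f\colon\R\to[-1,1]$ is continuous with $\|f\|_\infty\le 1$, and $f|_{[-1,1]}$ is a homeomorphism onto $[-1,1]$. Define $c=f^{-1}(a)$ by functional calculus, so $c\in A_{\mathrm{sa}}$ with $\|c\|\le 1$ and $f(c)=a$. By $(1)$ together with Lemma \ref{l:3.7}$(1)$, pick $b_n\in B_{\mathrm{sa}}$ with $d_\tau(b_n,c)\to 0$; set $c_n=f(b_n)\in B_1\cap B_{\mathrm{sa}}$. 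The resolvent identity gives
$$c_n-a=-(b_n-i)^{-1}(b_n-c)(c-i)^{-1}-(b_n+i)^{-1}(b_n-c)(c+i)^{-1},$$
and since $\|(s\pm i)^{-1}\|\le 1$ for every self-adjoint $s$, Lemma \ref{l:3.5}$(3)$ gives $\|c_n-a\|_2\le 2\|b_n-c\|_2\to 0$.

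For general $x\in A_1$ I pass to $M_2(A)$ via the self-adjoint element
$$\tilde x=\begin{pmatrix} 0 & x \\ x^* & 0\end{pmatrix},\qquad \|\tilde x\|=\|x\|\le 1.$$
Corollary \ref{c:3.4} extends $\tau$ to a quasitrace $\tau_2$ on $M_2(A)$, and $\tau_2$ is again faithful; moreover $M_2(B)$ is $d_{\tau_2}$-dense in $M_2(A)$ by entry-wise approximation, using the identity $\|a\otimes e_{ij}\|_{2,\tau_2}=\|a\|_2$ and iterated application of Lemma \ref{l:3.5}$(2)$. Applying the self-adjoint case just proved to $(M_2(A),\tau_2,M_2(B))$ yields self-adjoint matrices $\tilde x_n=\begin{pmatrix}p_n&q_n\\ q_n^*&s_n\end{pmatrix}\in (M_2(B))_1$ with $d_{\tau_2}(\tilde x_n,\tilde x)\to 0$, and in particular $q_n\in B_1$.

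To extract $d_\tau(q_n,x)\to 0$ I compress by $e_{11}$: for any self-adjoint $z\in M_2(A)$, $(ze_{11})^*(ze_{11})=e_{11}z^2e_{11}$, so the cyclic property of $\tau_2$ together with order preservation (Corollary \ref{c:3.4}$(1)$) gives $\tau_2(e_{11}z^2e_{11})=\tau_2(ze_{11}z)\le \tau_2(z^2)$. With $z=\tilde x-\tilde x_n$, the $(1,1)$-entry of $z^2$ is $p_n^2+(x-q_n)(x-q_n)^*$, whence
$$\|x-q_n\|_2^2=\tau\bigl((x-q_n)(x-q_n)^*\bigr)\le \tau\bigl(p_n^2+(x-q_n)(x-q_n)^*\bigr)\le \tau_2\bigl((\tilde x-\tilde x_n)^2\bigr)\to 0.$$
The main technical obstacle is the self-adjoint step: since the $b_n$ need not be norm-bounded, no generic continuous functional calculus argument applies, and the explicit decomposition $f(t)=(t-i)^{-1}+(t+i)^{-1}$ is precisely what supplies a Lipschitz bound for $f$ in $\|\cdot\|_2$ on all of $A_{\mathrm{sa}}$ with no a priori control on $\|b_n\|$.
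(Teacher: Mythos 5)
Your argument is correct, but it is organized differently from the paper's proof. The paper also uses the function $f(t)=2t/(1+t^2)$ and the self-adjoint element of $M_2(A)$ with $x$ and $x^*$ in the off-diagonal corners, but only in order to produce $y\in A_1$ with $x=2y(1+y^*y)^{-1}$; the approximation itself is then carried out entirely inside $A$: one picks arbitrary (not self-adjoint, not norm-controlled) $y_n\in B$ with $\|y_n-y\|_2\to 0$, sets $x_n=2y_n(1+y_n^*y_n)^{-1}\in B_1$, and an explicit algebraic identity for $x_n-x$ together with Lemma \ref{l:3.5} gives $d_\tau(x_n,x)\to 0$. In particular the paper never extends $\tau$ to $M_2(A)$ and needs no self-adjoint reduction. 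You instead prove a self-adjoint Kaplansky density statement via the resolvent decomposition $f(t)=(t-i)^{-1}+(t+i)^{-1}$ (which plays the same role as the paper's algebraic identity: a $\|\cdot\|_2$-estimate with no a priori bound on $\|b_n\|$) and then run the $2\times 2$ matrix trick at the level of the approximation; this obliges you to supply three extra inputs, all of which are indeed available: faithfulness of $\tau_2$ (by the argument of Lemma \ref{l:5.6}), $d_{\tau_2}$-density of $M_2(B)$ in $M_2(A)$ (by entrywise approximation and Lemma \ref{l:3.5}(2)), and the compression estimate $\tau_2(e_{11}z^2e_{11})\le\tau_2(z^2)$, which follows from property $(i)$ of Definition \ref{d:3.1} applied to $ze_{11}$ together with order preservation (Corollary \ref{c:3.4}(1)) --- what you call the cyclic property is exactly this, since quasitraces are not tracial on general products. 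So your route is the standard Kaplansky-density template adapted to the quasitrace $2$-norm, a bit longer but self-contained, while the paper's version is shorter because the non-self-adjoint functional calculus $y\mapsto 2y(1+y^*y)^{-1}$ lands directly in $B_1$. Two small corrections: the bound $\|c_n-a\|_2\le 2\|b_n-c\|_2$ tacitly invokes a triangle inequality for $\|\cdot\|_2$ that is not available for quasitraces; Lemma \ref{l:3.5}(2) gives instead $\|c_n-a\|_2^{2/3}\le 2\|b_n-c\|_2^{2/3}$, which is all you need. Likewise, to obtain $b_n\in B_{\mathrm{sa}}$ from condition (1) by symmetrizing, cite Lemma \ref{l:3.7}(2) (or Lemma \ref{l:3.5}(2)) in addition to Lemma \ref{l:3.7}(1).
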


\begin{proof}
$(2)\Rightarrow(1)$: trivial.
$(1)\Rightarrow(2)$: This follows essentially the proof of the ``classical'' Kaplansky theorem:
Consider the real function
$$f(t)=\frac{2t}{1+t^2},\quad t\in \R.$$
Then $|f(t)|\le 1$ for all $t\in \R$, and the restriction of $f$ to $[-1,1]$ is a homeomorphism of $[-1,1]$.
Let
$$g:[-1,1]\to[-1,1]$$
be the inverse of this function. Note that
\begin{eqnarray*}
  &&f(-t)=-f(t),\quad t\in \R\\
  &&g(-t)=-g(t),\quad t\in[-1,1].
\end{eqnarray*}
Assume (1), and let $x\in A_1$. Let
$$a=\left(
      \begin{array}{cc}
        0 & x \\
        x^* & 0 \\
      \end{array}
    \right)\in
(M_2(A)_{\mathrm{sa}})_1.
$$
Since $g$ is an odd function, $b=g(a)$ is of the form
$$ b=g(a)=\left(
            \begin{array}{cc}
              0 & y \\
              y^* & 0 \\
            \end{array}
          \right)
$$
for some $y\in A_1$. Moreover, since $a=f(b)$,
$$x=2y(1+y^*y)^{-1}=2(1+yy^*)^{-1}y.$$
Choose a sequence $y_n\in A$ such that $||y_n-y||_2\to 0$, and let
$$x_n=2y_n(1+y_n^*y_n)^{-1}\in B.$$
Then $x_n^*x_n=4y_n^*y_n(1+y_n^*y_n)^{-2}\le 1$ because $\sup_{s\ge 0} 4s(1+s)^{-2}=1$. Hence $x_n\in B_1$. Moreover,
\begin{eqnarray*}
  x_n-x&=&2(1+yy^*)^{-1}((1+yy^*)y_n-y(1+y_n^*y_n))(1+y_n^*y_n)^{-1}\\
  &=&2(1+yy^*)^{-1}(y_n-y)(1+y_n^*y_n)^{-1}+2(1+yy^*)^{-1}y(y^*-y_n^*)y_n(1+y_n^*y_n)^{-1}.
\end{eqnarray*}
Since $(1+yy^*)^{-1}$, $(1+y_n^*y_n)^{-1}$, $2(1+yy^*)^{-1}y$ and $2y_n(1+y_n^*y_n)$ all have $C^*$-norm at most 1,
Lemma \ref{l:3.5} yields
\begin{eqnarray*}
   ||x_n-x||_2^{2/3}&\le& 2^{2/3}||y_n-y||_2^{2/3}+2^{-2/3}||y_n^*-y^*||_2^{2/3}\\
   &=&(2^{2/3}+2^{-2/3})||y_n-y||_2^{2/3}\\
   &\to& 0 \text{ for } n\to\infty.
\end{eqnarray*}
Hence $x$ is in the $d_\tau$ closure of $B_1$.
\end{proof}

\begin{pro}\label{p:3.12}
  Let $M$ be a finite $AW^*$-algebra with a faithful normal quasitrace $\tau$ and let $A$ be a unital $C^*$-subalgebra of $M$.
  Then the $d_\tau$-closure of $A$ in $M$ is the smallest $AW^*$-subalgebra of $M$ containing $A$.
\end{pro}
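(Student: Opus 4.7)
Let $\bar A$ denote the $d_\tau$-closure of $A$ in $M$, and let $\bar A_1$ denote the $d_\tau$-closure of $A_1$ in $M$; by Lemma \ref{l:3.8}, $\bar A_1 \subset M_1$. The key technical step I will establish is a Kaplansky-type norm-controlled approximation: for each $x \in \bar A$ with $\|x\| \le K$ there exist $y_n \in A$ with $\|y_n\| \le K$ and $y_n \to x$ in $d_\tau$. From this, $\bar A = \R_{\ge 0} \bar A_1$, and $\bar A$ is closed under the $C^*$-algebra operations because one can always arrange the approximating sequences to be uniformly norm bounded and then apply Lemma \ref{l:3.7}.

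The main obstacle in proving this approximation is that a $d_\tau$-Cauchy sequence in $A$ need not be norm bounded, so Lemma \ref{l:3.7}(3) does not directly give closure of $\bar A$ under products. I propose to bypass this via an $M_2(M)$-Cayley transform: given $x_n \in A$ with $x_n \to x$ in $d_\tau$, form the self-adjoint off-diagonal matrices $a_n, a \in M_2(M)$ with $(1,2)$-entries $x_n$ and $x$ respectively, and take the Cayley transforms $U_n = (a_n - i)(a_n + i)^{-1} \in M_2(A)$ and $U = (a - i)(a+i)^{-1} \in M_2(M)$. The resolvent identity together with Lemma \ref{l:3.5}(3) gives $\|U_n - U\|_2 \le 2\|a_n - a\|_2 \to 0$. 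Since $\|a\| = \|x\| \le K$, the spectrum of $U$ is a compact subset of $\T \setminus \{1\}$, and by Tietze extension one can produce an odd continuous function $g_K \colon \T \to [-K, K]$ that agrees with the inverse Cayley transform on $\sigma(U)$, so that $g_K(U) = a$. Continuous functional calculus of unitaries is $d_\tau$-continuous (approximate $g_K$ uniformly by polynomials in $z, \bar z$ and use the unit norm bound on unitaries), hence $g_K(U_n) \in M_2(A)$ converges to $a$ in $d_{\tau_2}$ with matrix norms at most $K$; extracting the $(1,2)$-entry gives the desired $y_n \in A$.

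With the approximation in hand, $\bar A$ is a unital $C^*$-subalgebra of $M$, norm-closed since norm convergence implies $d_\tau$ convergence, with unit ball equal to $\bar A_1$, which is $d_\tau$-complete as a closed subset of the $d_\tau$-complete ball $M_1$ (Proposition \ref{p:3.10}). Lemma \ref{l:3.9} then says $\bar A$ is an $AW^*$-algebra with $\tau|_{\bar A}$ normal, and for any orthogonal family of projections $(p_i) \subset \bar A$ one has $\bigvee_M p_i \le \bigvee_{\bar A} p_i$ with both of trace $\sum_i \tau(p_i)$ by normality, so faithfulness of $\tau$ forces equality. Hence $\bar A$ is a bona fide $AW^*$-subalgebra of $M$ containing $A$. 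For minimality, if $N$ is any $AW^*$-subalgebra of $M$ containing $A$, then by Proposition \ref{p:3.10} the unit ball $N_1$ is $d_\tau$-complete and hence $d_\tau$-closed in $M_1$; as $A_1 \subset N_1$ this forces $\bar A_1 \subset N$, and therefore $\bar A = \R_{\ge 0} \bar A_1 \subset N$, completing the proof.
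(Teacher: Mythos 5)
Your proof is correct, and its overall skeleton matches the paper's: identify the unit ball of the $d_\tau$-closure with the $d_\tau$-closure of $A_1$, deduce its $d_\tau$-completeness from Lemma \ref{l:3.8} and Proposition \ref{p:3.10} applied to $M$, invoke Lemma \ref{l:3.9} to get an $AW^*$-algebra with normal restricted quasitrace, check the supremum condition for orthogonal projections, and obtain minimality from Proposition \ref{p:3.10} applied to a competing $AW^*$-subalgebra. Where you genuinely diverge is the Kaplansky step. The paper simply cites Lemma \ref{l:3.11} (built on $t\mapsto 2t/(1+t^2)$) and asserts via Lemma \ref{l:3.7} that the closure is a $C^*$-algebra; as applied, this quietly presupposes bounded approximating sequences, which is exactly the issue you isolate (a $d_\tau$-Cauchy sequence in $A$ need not be norm bounded, so closure under products is not immediate). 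Your Cayley-transform device resolves this head-on: the transform sends the off-diagonal self-adjoint matrices built from the possibly large-norm approximants $x_n$ into unitaries of $M_2(A)$, the resolvent estimate $\|U_n-U\|_2\le 2\|a_n-a\|_2$ is independent of $\|x_n\|$, and functional calculus by a fixed continuous function is $d_{\tau_2}$-continuous on the uniformly bounded unitaries, so the odd, real-valued, $[-K,K]$-valued extension $g_K$ of the inverse Cayley transform yields approximants in $A$ of norm at most $K=\|x\|$. This buys a self-contained, norm-controlled substitute for Lemma \ref{l:3.11} that removes the bootstrapping, at the cost of re-proving a Kaplansky-type density rather than citing it; your other deviations (comparing $\bigvee_M p_i$ with $\bigvee_{\bar A}p_i$ by traces and faithfulness instead of the paper's net-limit argument, and deducing minimality from $\bar A=\R_{\ge 0}\bar A_1$ plus $d_\tau$-closedness of $N_1$ rather than from Lemma \ref{l:3.11}) are correct and essentially cosmetic. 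If you write this up, make the oddness explicit: you need $g_K(\bar z)=-g_K(z)$ with $g_K$ real-valued, agreeing with the inverse Cayley transform on the symmetric arc $\{(t-i)(t+i)^{-1}\mid t\in[-K,K]\}\supset\sigma(U)$ (Tietze plus symmetrization), since conjugating by $1\otimes(e_{11}-e_{22})$ sends $U_n$ to $U_n^*$ and this is what forces $g_K(U_n)$ to be off-diagonal, self-adjoint, and of norm at most $K$.
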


\begin{proof}
  Let $B$ be the $d_\tau$-closure of $A$. By Lemma \ref{l:3.7}, $B$ is a unital $C^*$-subalgebra of $M$ (note that norm-convergence implies $\tau$-convergence). By Lemma \ref{l:3.8} and Lemma \ref{l:3.11}, $B_1$ is the $d_\tau$-closure of $A_1$.
  Hence by Proposition \ref{p:3.10} applied to $M$, $B_1$ is complete in the $d_\tau$-metric so by Lemma \ref{l:3.9}, $B$ is an $AW^*$-algebra in its own right. To be a $AW^*$-subalgebra however, one also requires  that if $(p_i)_{i\in I}$ is a set of orthogonal projections in $B$ and $p=\bigvee_{i\in I} p_i$ is the least upper bound of $(p_i)_{i\in I}$ computed in the projection lattice of $M$,  then $p\in B$.
  However, this is clearly true because $p$ is the $d_\tau$-limit of the net $(\sum_{i\in J})_{J\in \fJ}$, where $\fJ$ is the family of finite subsets of $I$ (cf.~proof of Lemma \ref{l:3.11}). Hence $B$ is an $AW^*$-subalgebra of $M$. Conversely, if $C$ is an $AW^*$-subalgebra of $M$ containing $A$, then by Proposition \ref{p:3.10}, $C_1$ is $d_\tau$-complete, so by Lemma \ref{l:3.11} $C$ is $d_\tau$-closed. Hence, $C\supset B$.
\end{proof}

\section{Ultraproducts and $AW^*$-completions}
The following lemma is probably well known. For completeness we include a proof.

\begin{lem}\label{l:4.1}
  Let $(X_n,d_n)_{n=1}^\infty$ be a sequence of metric spaces for which
  $$\sup\limits_{n\in\N} \mathrm{diam}(X_n) <\infty.$$
  Let $\fU$ be a free ultrafilter on $\N$. Define an equivalence relation $\sim$ on $X=\prod\limits_{n=1}^\infty X_n$ by
  $$x\sim y \iff \lim\limits_{\fU} d_n(x_n,y_n)=0.$$
  Then $X/\sim$ is a complete metric space in the metric
  $$d([x],[y])=\lim\limits_{\fU} d_n(x_n,y_n).$$
\end{lem}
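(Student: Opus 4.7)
The proof splits into two parts: first verifying that $d$ is a well-defined metric on $X/\sim$, and second establishing completeness, which is the real content.

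For the first part, I would note that since $\sup_n \mathrm{diam}(X_n) < \infty$, the sequence $(d_n(x_n,y_n))_{n=1}^\infty$ is bounded in $\R$, so the ultrafilter limit $\lim_\fU d_n(x_n,y_n)$ exists. Taking ultrafilter limits through the triangle inequality and symmetry inherited from each $d_n$ shows that $d$ defines a pseudometric on $X$ which, by construction, factors through $\sim$ to give a genuine metric on $X/\sim$ (the equivalence relation was designed exactly to kill the kernel).

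For completeness, let $([x^{(k)}])_{k=1}^\infty$ be a Cauchy sequence in $X/\sim$. By passing to a subsequence I may assume $d([x^{(k)}],[x^{(k+1)}]) < 2^{-k}$. For each $k$, set
$$A_k = \{n\in\N : d_n(x^{(k)}_n, x^{(k+1)}_n) < 2^{-k}\} \in \fU,$$
and
$$B_k = A_1 \cap A_2 \cap \cdots \cap A_k \cap \{n\in\N : n \ge k\} \in \fU.$$
Then $B_1 \supset B_2 \supset \cdots$ and $\bigcap_k B_k = \emptyset$ because $\fU$ is free. For each $n \in B_1$ define $k(n) = \max\{k : n \in B_k\}$ (which is finite because the intersection is empty), and set $y_n = x^{(k(n))}_n$; on the complement of $B_1$ choose $y_n$ arbitrarily in $X_n$.

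The key estimate is that for any fixed $k$ and any $n \in B_k$ with $k(n) \ge k$, one has
$$d_n(y_n, x^{(k)}_n) \le \sum_{j=k}^{k(n)-1} d_n(x^{(j)}_n, x^{(j+1)}_n) \le \sum_{j=k}^{\infty} 2^{-j} = 2^{1-k},$$
since $n \in B_{k(n)} \subseteq A_j$ for each such $j$. Because $B_k \in \fU$, taking the ultrafilter limit yields $d([y],[x^{(k)}]) \le 2^{1-k}$, so $[x^{(k)}] \to [y]$ in $X/\sim$. As any Cauchy sequence having a convergent subsequence is itself convergent, completeness follows.

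The main obstacle is precisely the diagonal construction of the representative $y = (y_n)$: one must use the nested sets $B_k$, all in $\fU$, whose intersection is empty to assign each coordinate $n$ to a single approximating element $x^{(k(n))}_n$, and then leverage the telescoping $\sum 2^{-j}$ to control $d_n(y_n, x^{(k)}_n)$ uniformly on $B_k$. Once this is done, the ultrafilter limit reads off the desired distance estimate without further work.
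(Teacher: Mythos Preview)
Your proof is correct and follows essentially the same approach as the paper: reduce to a rapidly Cauchy subsequence, build nested ultrafilter sets $B_k$ (the paper's $F_k$) with empty intersection, define the limit representative coordinatewise via the largest $k$ with $n\in B_k$, and control the distance to $x^{(k)}$ by the telescoping sum $\sum_{j\ge k}2^{-j}$ on $B_k$. The only cosmetic differences are that the paper sets $F_0=\N$ rather than choosing $y_n$ arbitrarily off $B_1$, and its index convention is shifted by one.
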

\begin{proof}
  Define
  $$\bar d(x,y)=\lim\limits_{\fU} d_n(x_n,y_n),\quad x,y\in X.$$
  Then $\bar d$ induces a metric on $X/\sim$ by
  $$d([x],[y])=\bar d(x,y).$$
  Let $(z_i)_{i=1}^\infty$ be a Cauchy sequence in $X/\sim$.
  To prove convergence of $(z_i)_{i=1}^\infty$, it suffices to prove that $(z_i)_{i=1}^\infty$  has a convergent subsequence. Hence we may assume
  $$d(z_i,z_{i+1})<2^{-i}, \quad i\in\N.$$
  Choose $x^{(i)}=(x_n^{(i)})_{n\in\N}$ in $X$, such that $z_i=[x^{(i)}]$. Since
  $$\lim_{n,U} d(x_n^{(i)},x_n^{(i+1)})< 2^{-i}$$
  we can choose sets
  $$F_1\supset F_2\supset \cdots \supset F_i\supset$$
  in $\fU$ such that
  $$d(x_n^{(i)},x_n^{(i+1)})< 2^{-i},\quad \forall n\in F_i.$$
  Since $\fU$ is free we can replace $F_i$ with $F_i\cap \{i,i+1,\ldots\}$ and obtain that also
  $$\bigcap_{i=1}^\infty F_i=\varnothing.$$
  Let $F_0=\N$ and note that $\N$ is the disjoint union of $(F_{i-1}\backslash F_i)_{i=1}^\infty$.
  Hence we can define $x=(x_n)_{n=1}^\infty\in X$ by
  $$x_n=x_n^{(i)}, \qquad n\in F_{i-1}\backslash F_i.$$
  Let $n\in F_i$. Then $n\in F_{j-1}\backslash F_j$    for some $j>i$.
  For this $j$,
  \begin{eqnarray*}
    d_n(x_n^{(i)},x_n)&=&d_n(x_n^{(i)},x_n^{(j)})\\
    &\le&\sum_{k=i}^{j-1} d(x_n^{(k)},x_n^{(k+1)})\\
    &<& 2^{1-i}.
  \end{eqnarray*}

  Since $F_i\in \fU$, $d([x^{(i)}],[x])\le \sup\limits_{n\in F_i} d_n(x_n^{(i)},x_n)\le 2^{1-i}$.
  Therefore,
  $z_i=[x^{(i)}]$ converges to $[x]$ in $X/\sim$.
    \end{proof}
  If $(A_n)_{n=1}^\infty$ is a sequence of $C^*$-algebras, we let $\ell^\infty\{A_n\}=\{(x_n)_{n=1}^\infty\mid x_n\in A, \,\,\sup ||x_n||<\infty\}$.
  If $A_n=A$ (fixed) for all $n$, we write $\ell^\infty(A)$ instead.

  \begin{pro}\label{p:4.2}
    Let $(A_n,\tau_n)_{n=1}^\infty$ be a sequence of unital $C^*$-algebras with normalized quasitraces $\tau_n$, and let $\fU$ be a free ultrafilter on $\N$.
    Let
    $$\fJ_{\fU}=\{ (x_n)_{n=1}^\infty\in \ell^\infty\{A_n\} \mid \lim\limits_{\fU} \tau_n(x_n^*x_n)=0  \}.$$
    Then $\fJ_{\fU}$ is a norm-closed two-sided ideal in $\ell^\infty\{A_n\}$, and $\ell^\infty\{ A_n\}/\fJ_{\fU}$ is a finite $AW^*$-algebra with normal faithful quasitrace $\tau_\fU$ given by
    \begin{equation}\label{e:page4.3}
      \tau_\fU([x])=\lim\limits_{\fU} \tau_n(x_n), \quad x=(x_n)_{n=1}^\infty\in \ell^\infty \{A_n\},
    \end{equation}
    where $x\to [x]$ is the quotient map from $\ell^\infty\{A_n\}$ to $\ell^\infty\{A_n\}/\fJ_\fU$.
  \end{pro}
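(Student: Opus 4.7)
The plan is to establish the four conclusions in sequence: (1) $\fJ_\fU$ is a norm-closed two-sided ideal; (2) formula $(\ref{e:page4.3})$ defines a faithful quasitrace $\tau_\fU$ on $B := \ell^\infty\{A_n\}/\fJ_\fU$; (3) the unit ball $B_1$ is complete in the $d_{\tau_\fU}$-metric; and (4) Lemma \ref{l:3.9} then delivers the $AW^*$-structure and normality, with finiteness automatic from faithfulness of a normalized quasitrace.

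For step (1), I would check each property coordinatewise using Lemma \ref{l:3.5}: absorption from either side by $\ell^\infty\{A_n\}$ follows from Lemma \ref{l:3.5}(3), closure under addition from Lemma \ref{l:3.5}(2), and norm-closure from the same quasi-triangle inequality combined with the uniform bound $\|x_n\|_{2,\tau_n} \leq \|x_n\|$ (valid since $\tau_n(1)=1$). For step (2), the central quantitative input is the estimate $|\tau_n(z)| \leq C\,\tau_n(z^*z)^{1/2}$ for an absolute constant $C$: write $z = a + ib$ with $a,b$ self-adjoint, apply Cauchy--Schwarz for $\tau_n$ on $C^*(a,1)$ and $C^*(b,1)$, and bound $\tau_n(a^2)$, $\tau_n(b^2)$ in terms of $\tau_n(z^*z)$ via the algebraic identity $(z+z^*)^*(z+z^*) + (z-z^*)^*(z-z^*) = 2(z^*z+zz^*)$, Lemma \ref{l:3.5}(1), and the equality $\tau_n(zz^*) = \tau_n(z^*z)$. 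This gives well-definedness of $\tau_\fU$ on $B$, norm-continuity, and faithfulness by construction; axioms (i), (ii) of Definition \ref{d:3.1} pass directly to $\tau_\fU$ via self-adjoint lifts. Axiom (iv) is obtained by running the same ultralimit construction on $(M_2(A_n), \tau_{n,2})$, with $M_2(\ell^\infty\{A_n\}) \cong \ell^\infty\{M_2(A_n)\}$ and matching ideals. The main obstacle is axiom (iii), since commuting self-adjoint elements of $B$ need not admit commuting lifts in $\ell^\infty\{A_n\}$. I would reduce to the singly-generated case by packaging $[a]$ and $[b]$ as the normal element $[c] = [a] + i[b]$ and proving linearity of $\tau_\fU$ on $C^*([c],1)$ at polynomials $p([c],[c]^*)$; for any lift $(c_n)$ of $[c]$, the commutator $[c_n, c_n^*]$ lies in $\fJ_\fU$, so the class $[p(c_n,c_n^*)] \in B$ is independent of the chosen ordering of the factors, and the argument is completed by Stone--Weierstrass density together with the norm continuity of $\tau_\fU$.

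For step (3), I would apply Lemma \ref{l:4.1} to the metric spaces $X_n = ((A_n)_1, d_{\tau_n})$, whose diameters are uniformly bounded by $2^{2/3}$. The natural map $[(x_n)]_\sim \mapsto [(x_n)]_{\fJ_\fU}$ from the ultralimit $X/\sim$ to $B_1$ is well-defined and injective by the equivalence of $\sim$ with having difference in $\fJ_\fU$, surjective by the standard lifting of the closed unit ball under a $C^*$-quotient map, and isometric because $t \mapsto t^{2/3}$ is continuous on bounded sets, giving $\lim_\fU \|x_n - y_n\|_{2,\tau_n}^{2/3} = (\lim_\fU \|x_n - y_n\|_{2,\tau_n}^2)^{1/3} = d_{\tau_\fU}([x],[y])$. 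Hence $B_1$ is $d_{\tau_\fU}$-complete, and Lemma \ref{l:3.9} yields that $B$ is an $AW^*$-algebra with $\tau_\fU$ normal. Finiteness is automatic: for any isometry $v \in B$ with $v^*v = 1$, $\tau_\fU(1-vv^*) = \tau_\fU(v^*v) - \tau_\fU(vv^*) = 0$, hence $vv^* = 1$ by faithfulness. The hardest step is the verification of axiom (iii) in step (2); the rest is careful but standard bookkeeping with the quasi-norms $\|\cdot\|_{2,\tau_n}$ and the pointwise quasitraces.
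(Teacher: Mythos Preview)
Your overall architecture---completeness of the unit ball via Lemma~\ref{l:4.1}, then Lemma~\ref{l:3.9} for the $AW^*$-structure and normality, with finiteness from faithfulness---matches the paper exactly. The divergence is in steps (1) and (2), and there the paper takes a much shorter route that sidesteps precisely the difficulty you flag as hardest.

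The paper simply observes that
\[
\bar\tau\bigl((x_n)\bigr)=\lim_{\fU}\tau_n(x_n)
\]
is already a quasitrace on $\ell^\infty\{A_n\}$: all four axioms of Definition~\ref{d:3.1} hold trivially coordinatewise, and in particular (iii) is immediate because two elements of $\ell^\infty\{A_n\}$ commute if and only if they commute in each coordinate, so linearity of each $\tau_n$ on abelian subalgebras of $A_n$ passes directly to $\bar\tau$. Proposition~\ref{p:3.2} (resting on the Blackadar--Handelman dimension-function correspondence) then gives in one stroke that $\fJ_\fU$ is a closed two-sided ideal and that $\bar\tau$ descends to a faithful quasitrace $\tau_\fU$ on the quotient. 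Your steps (1) and (2) collapse to a single citation, and the lifting problem for commuting elements never arises.

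By contrast, your direct verification of (iii) on the quotient has a gap. You argue that for a normal $[c]$ with lift $(c_n)$, the class $[p(c_n,c_n^*)]$ is ordering-independent; that is true, and together with your bound $|\tau_n(z)|\le C\,\tau_n(z^*z)^{1/2}$ it shows $\tau_\fU$ is well-defined and norm-continuous on $C^*([c],1)$. But it does not give \emph{additivity}: for self-adjoint polynomials $p,q$ one needs
\[
\lim_{\fU}\Bigl(\tau_n\bigl((p+q)(c_n,c_n^*)\bigr)-\tau_n\bigl(p(c_n,c_n^*)\bigr)-\tau_n\bigl(q(c_n,c_n^*)\bigr)\Bigr)=0,
\]
and nothing in your sketch forces this, since $p(c_n,c_n^*)$ and $q(c_n,c_n^*)$ lie in the non-abelian algebra $C^*(c_n,1)$ and $\tau_n$ is only linear on abelian subalgebras. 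Asymptotic normality of $c_n$ in $\|\cdot\|_2$ does not by itself transfer to approximate additivity of $\tau_n$. The paper's reorganization---quasitrace upstairs, then Proposition~\ref{p:3.2}---is exactly what closes this gap.
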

\begin{proof}
  Without loss of generality, assume that each $\tau_n$ is faithful. Otherwise, we can replace $A_n$ with $A_n/I_n$, where
  $$I_n=\{x\in A_n \mid \tau_n(x^*x)=0  \}$$
  (cf.~Proposition \ref{p:3.2}). It is clear that
  $$\bar \tau_n(x)=\lim\limits_{\fU} \tau_n(x_n), \qquad x=(x_n)_{n=1}^\infty\in \ell^\infty\{A_n\}$$
  defines a quasitrace on $\ell^\infty\{A_n\}$, and  so by Proposition \ref{p:3.2}, $\fJ_{\fU}$ is a norm-closed two sided ideal in $\ell^\infty\{A_n\}$ and there is a faithful quasitrace $\tau_\fU$ on $\ell^\infty\{A_n\}/\fJ_\fU$ such that Equation (\ref{e:page4.3}) holds. Since a $*$-homomorphism of a $C^*$-algebra $A$ onto a $C^*$-algebra $B$ maps the closed unit ball of $A$ onto the closed unit ball of $B$,  we get from Definition \ref{d:3.6} and Lemma \ref{l:4.1} that the unit ball of $\ell^\infty\{A_n\}/\fJ_\fU$ is complete in the metric associated with $\tau_\fU$. Hence Lemma \ref{l:3.9} completes the proof of Proposition \ref{p:4.2}.
\end{proof}
  The following is a slight extension of Corollary II.2.4 in \cite{BH}.
  \begin{cor}\label{c:4.3}
    Let $A$ be a unital $C^*$-algebra with a faithful quasitrace $\tau$.  Then there is a one-to-one $*$-homomorphism $\pi$ of $A$ into a finite $AW^*$-algebra $M$ with a faithful normal quasitrace $\bar \tau$ such that
    $$\tau(x)=\bar\tau \circ \pi(x),\qquad x\in A.$$
  \end{cor}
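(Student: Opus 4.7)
The plan is to obtain $M$ as an ultrapower of $A$ with respect to $\tau$, using Proposition \ref{p:4.2} applied to the constant sequence. Fix a free ultrafilter $\fU$ on $\N$ and set $A_n=A$, $\tau_n=\tau$ for all $n$. Then Proposition \ref{p:4.2} gives that $\ell^\infty(A)/\fJ_\fU$ is a finite $AW^*$-algebra, which we take to be $M$, carrying a faithful normal quasitrace $\bar\tau=\tau_\fU$ defined by $\bar\tau([(x_n)])=\lim_\fU \tau(x_n)$.

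Next I would construct $\pi$ as the diagonal embedding followed by the quotient: for $x\in A$ let $\pi(x)=[(x,x,x,\ldots)]\in M$. This is obviously a $*$-homomorphism, and the identity
$$\bar\tau(\pi(x))=\lim_\fU \tau(x)=\tau(x),\qquad x\in A,$$
is immediate from the formula for $\tau_\fU$. Finally, injectivity of $\pi$ follows from faithfulness of $\tau$: if $\pi(x)=0$, then the constant sequence $(x,x,\ldots)$ lies in $\fJ_\fU$, so $\tau(x^*x)=\lim_\fU \tau(x^*x)=0$, and faithfulness of $\tau$ forces $x=0$.

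There is essentially no obstacle here; the work was done in setting up Proposition \ref{p:4.2}. The only thing worth double-checking is that the diagonal map really lands in $\ell^\infty(A)$ (clear, since constant sequences are bounded by $\|x\|$) and that injectivity uses faithfulness of $\tau$ on $A$ itself rather than faithfulness of $\bar\tau$ on $M$ — but these two checks coincide via the diagonal embedding, as shown above.
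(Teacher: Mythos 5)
Your proposal is correct and is essentially the paper's own argument: the paper also takes $A_n=A$ for all $n$, applies the ultraproduct construction (Proposition \ref{p:4.2}), and defines $\pi(x)=[(x)_{n=1}^\infty]$, with faithfulness of $\tau$ giving injectivity. Nothing further is needed.
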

  \begin{proof}
  Let $A_n=A$ for all $n$ and apply Proposition \ref{p:3.2}.
  The $*$-homomorphism $\pi$ is given by
  $$\pi(x)=[(x)_{n=1}^\infty].$$
  \end{proof}

Let $A$ and $M$ be as in Corollary \ref{c:4.3}.
Then by Proposition \ref{p:3.12}, the closure $B$ of $\pi(A)$ in the $d_\tau$-metric is the smallest $AW^*$-subalgebra of $M$ containing $A$. Moreover, by Lemma \ref{l:3.11}, every element of $B$ is the $d_{\bar \tau}$-limit of a bounded sequence in $\pi(A)$. Since for every $t>0$, the $t$-ball of $B$ is $d_{\bar\tau}$-complete by Proposition  \ref{p:3.10}, $B$ is equal to the quotient $C^*$-algebra
$$B=\tilde A/\tilde I$$
where
$$\tilde A=\{(x_n)_{n=1}^\infty \in \ell^\infty(A)\mid x_n \text{ is a $d_\tau$-Cauchy sequence}   \}$$
and
$$\tilde I=\{(x_n)_{n=1}^\infty\in \ell^\infty(A)\mid x_n\to 0 \text{ in the $d_\tau$-metric}\}.$$
The restriction of $\bar \tau$ to $B=\tilde A/\tilde I$ is given by
\begin{equation}\label{e:page4.5}
  \bar\tau([x])=\lim\limits_{n\to\infty} \tau(x_n),\qquad  x=(x_n)\in \tilde A.
\end{equation}

Indeed, Equation (\ref{e:page4.5}) follows from Lemma \ref{l:3.7}(4) when $x\ge 0$  and by Definition \ref{d:3.1} $(ii)$ and $(iii)$ for general $x\in \tilde A$. In particular, we have
\begin{pro}\label{p:4.4}
  Let $A$ be a unital $C^*$-algebra with a faithful quasitrace $\tau$. Let $(\pi,M,\bar\tau)$ and $(\pi^1,M^1,\bar\tau^1)$ be two triples satisfying the conditions of Corollary \ref{c:3.4} and let $B$ (resp.~$B^1$) denote the $AW^*$-subalgebra of $M$ (resp.~$M^1$) generated by $\pi(A)$ (resp.~$\pi^1(A)$).
  Then there is a unique $*$-isomorphism
  $$\rho:B\twoheadrightarrow B^1$$
  such that $\pi^1=\rho \circ\pi$ and $\bar \tau=\bar\tau^1\circ \rho$.
\end{pro}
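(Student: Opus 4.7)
The plan is to exploit the concrete description $B=\tilde A/\tilde I$ established in the discussion immediately preceding the proposition, together with the analogous description $B^1=\tilde A/\tilde I$. The key point is that both $\tilde A$ (the bounded $d_\tau$-Cauchy sequences in $A$) and $\tilde I$ (the sequences in $\ell^\infty(A)$ converging to $0$ in $d_\tau$) depend only on the pair $(A,\tau)$, not on the particular embedding chosen. This gives a canonical candidate for $\rho$, after which the two compatibility conditions become essentially formal.

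First I observe that $\tau=\bar\tau\circ\pi$ combined with faithfulness of $\tau$ forces $\pi$ to be injective and to satisfy $||\pi(a)||_{2,\bar\tau}=||a||_{2,\tau}$ for every $a\in A$; the same holds for $\pi^1$. Hence $\pi$ carries every $d_\tau$-Cauchy sequence $(a_n)\in\tilde A$ to a bounded $d_{\bar\tau}$-Cauchy sequence in $\pi(A)\subset B$, which converges in $B$ by Proposition \ref{p:3.10} and Lemma \ref{l:3.11}. Using Lemma \ref{l:3.7} to see that involution, sum and product of such limits pass through, one obtains a surjective $*$-homomorphism $q:\tilde A\to B$ with $\ker q=\tilde I$; analogously $q^1:\tilde A\to B^1$ has $\ker q^1=\tilde I$. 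Set $\rho:=q^1\circ q^{-1}$, a $*$-isomorphism $B\to B^1$. Evaluating on the constant sequence $(a)_{n=1}^\infty$ gives $q((a)_{n=1}^\infty)=\pi(a)$ and $q^1((a)_{n=1}^\infty)=\pi^1(a)$, so $\rho\circ\pi=\pi^1$. For the trace identity, Equation (\ref{e:page4.5}) yields
$$\bar\tau^1(\rho(q(x)))=\bar\tau^1(q^1(x))=\lim_n\tau(x_n)=\bar\tau(q(x))$$
for every $x=(x_n)\in\tilde A$, i.e.~$\bar\tau=\bar\tau^1\circ\rho$.

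For uniqueness, suppose $\rho'$ is another $*$-isomorphism meeting the two conditions. Then $\rho'=\rho$ on $\pi(A)$, and the condition $\bar\tau=\bar\tau^1\circ\rho'$ gives $||\rho'(x)-\rho'(y)||_{2,\bar\tau^1}=||x-y||_{2,\bar\tau}$ for all $x,y\in B$, so $\rho'$ is $d_{\bar\tau}$-continuous. Since $\pi(A)$ is $d_{\bar\tau}$-dense in $B$ by Proposition \ref{p:3.12}, $\rho'=\rho$ throughout $B$. The only genuine obstacle is to verify carefully that $q$ really is a well-defined surjective $*$-homomorphism with kernel exactly $\tilde I$; but this is precisely the content of the paragraph preceding the proposition (surjectivity comes from Lemma \ref{l:3.11} plus the $d_{\bar\tau}$-completeness of the unit ball of $B$ from Proposition \ref{p:3.10}, and the algebraic structure is preserved by Lemma \ref{l:3.7}). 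Everything else reduces to diagram-chasing within the machinery already developed.
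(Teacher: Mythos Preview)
Your proof is correct and follows exactly the paper's approach: the paper's one-line argument is that ``both $B$ and $B^1$ are naturally isomorphic to $\tilde A/\tilde I$,'' and you have simply unpacked this, constructing the quotient maps $q,q^1$ explicitly and verifying the two compatibility conditions via constant sequences and Equation~(\ref{e:page4.5}). Your added uniqueness argument (via $d_{\bar\tau}$-isometry and density of $\pi(A)$) is a welcome detail the paper leaves implicit; the only cosmetic quibble is that ``$\rho:=q^1\circ q^{-1}$'' should be read as the induced map between the quotients $\tilde A/\tilde I$, since $q$ itself is not invertible.
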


\begin{proof}
  With the notation preceding Proposition \ref{p:4.4}, both $B$ and $B^1$ are naturally isomorphic to $\tilde A/\tilde I$.
\end{proof}

\begin{defn}\label{d:4.5}
  Let $A$ be a unital $C^*$-algebra with a faithful quasitrace $\tau$, and let $B=\tilde A/\tilde I$ be the finite $AW^*$-algebra described prior to Proposition \ref{p:4.4} with normal faithful quasitrace
  $$\bar\tau ([x])=\lim\limits_{n\to\infty} \tau(x_n).$$
  Let us call $(B,\bar\tau)$ the $AW^*$-completion of $(A,\tau)$.
\end{defn}

\begin{pro}\label{p:4.6}
  Let $\tau$ be a faithful normalized quasitrace on a unital $C^*$-algebra $A$. If $\tau$ is an extreme point in $QT(A)$, then the $AW^*$-completion of $(A,\tau)$ is a finite $AW^*$-factor.
\end{pro}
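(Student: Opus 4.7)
The plan is to argue by contradiction on the factor property. Suppose $B$ is not a factor; I will construct two distinct normalized quasitraces on $A$ whose convex combination equals $\tau$, contradicting extremality.

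Since $B$ is a finite $AW^*$-algebra, its center $Z(B)$ is an abelian $AW^*$-algebra. If $Z(B)\ne\C\cdot 1$, the spectral/Stonean structure of abelian $AW^*$-algebras produces a central projection $p\in B$ with $p\ne 0,1$, and faithfulness of $\bar\tau$ gives $\lambda:=\bar\tau(p)\in(0,1)$. Define
$$\tau_1(a)=\tfrac{1}{\lambda}\bar\tau(p\pi(a)),\qquad \tau_2(a)=\tfrac{1}{1-\lambda}\bar\tau((1-p)\pi(a)),\qquad a\in A.$$
The main verification is that $\tau_1,\tau_2\in \QT(A)$ and both are normalized. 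Centrality of $p$ is essential: $pb$ is positive (respectively self-adjoint) whenever $b$ is; for any abelian $C^*$-subalgebra $C\subset A$ the subalgebra of $B$ generated by $p$ and $\pi(C)$ remains abelian, so $\bar\tau$ is linear on it; and $\bar\tau(px^*x)=\bar\tau(pxx^*)$ follows from the quasitrace property of $\bar\tau$ applied to $y=px$. This settles axioms (i)--(iii) of Definition \ref{d:3.1}; axiom (iv) is treated identically using the central projection $p\otimes 1\in M_2(B)$ together with the canonical 2-extension of $\bar\tau$. Linearity of $\bar\tau$ on the abelian algebra generated by $p$ and any self-adjoint $\pi(a)$, extended to complex $a$ via axiom (ii), gives $\bar\tau(\pi(a))=\bar\tau(p\pi(a))+\bar\tau((1-p)\pi(a))$, hence
$$\tau=\lambda\tau_1+(1-\lambda)\tau_2.$$
Extremality of $\tau$ then forces $\tau_1=\tau$, equivalently $\bar\tau(p\pi(a))=\lambda\bar\tau(\pi(a))$ for all $a\in A$.

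The final step is to upgrade this identity to a statement about $p$ itself, producing the contradiction $\lambda=\lambda^2$. By Proposition~\ref{p:3.12} combined with Lemma~\ref{l:3.11}, there is a sequence $(a_n)\subset A$ with $\pi(a_n)\in\pi(A)_1$ and $\pi(a_n)\to p$ in $d_{\bar\tau}$. Set $c_n=\pi(a_n)^*\pi(a_n)\in\pi(A)_+$; by Lemma~\ref{l:3.7}(1) and (3), $c_n\to p^*p=p$ in $d_{\bar\tau}$, and substituting $a_n^*a_n$ into the identity above gives $\bar\tau(pc_n)=\lambda\bar\tau(c_n)$. Since $p$ is central, $pc_n=c_n^{1/2}pc_n^{1/2}\ge 0$, and again by Lemma~\ref{l:3.7}(3), $pc_n\to p^2=p$ in $d_{\bar\tau}$. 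Both sides are now positive and convergent, so Lemma~\ref{l:3.7}(4) passes to the limit to yield $\bar\tau(p)=\lambda\bar\tau(p)$, i.e.\ $\lambda=\lambda^2$. This forces $\lambda\in\{0,1\}$, contradicting $\lambda\in(0,1)$.

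The algebraic verification of the quasitrace axioms for $\tau_1,\tau_2$ is mechanical; the delicate point, and the one I expect to require most care, is the limiting step at the end. The function $\bar\tau$ is only known to be continuous on $B_+$ (Lemma~\ref{l:3.7}(4)) and only linear on abelian subalgebras, so the whole passage must be arranged using positive elements. Centrality of $p$ is precisely what ensures the auxiliary elements $c_n$ and $pc_n$ stay positive, making the limit argument work.
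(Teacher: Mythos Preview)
Your proposal is correct and follows essentially the same route as the paper: pick a nontrivial central projection $p$, split $\tau$ via $\bar\tau(p\,\cdot\,)$ and $\bar\tau((1-p)\,\cdot\,)$, and contradict extremality. The paper's proof is very terse (it simply asserts that the decomposition is a ``non-trivial convex combination'' and appeals to the $W^*$-analogue), whereas you have carefully supplied the missing step---namely, that extremality only yields $\tau_1=\tau$, and one must then use $d_{\bar\tau}$-density of $\pi(A)$ together with Lemma~\ref{l:3.7}(4) on positive elements to pass to the limit and force $\lambda=\lambda^2$; this is exactly the content hidden behind the paper's phrase ``since $\pi(A)$ is $d_{\bar\tau}$-dense in $B$ it follows easily.''
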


\begin{proof}
  The $W^*$-version of this is well known, and the proof for the above case is the same: Indeed, if the $AW^*$-completion $(B,\bar \tau)$ is not a factor, then choose a central projection $p\in B$, $p\ne 0$, $p\ne 1$.
  Let $\pi$ be the embedding of $A$ into $B$. Since $\pi(A)$ is $d_{\bar\tau}$-dense in $B$ it follows easily that $\tau=\tau_1+\tau_2$, where $\tau_1,\tau_2$ are the quasitraces
  \begin{eqnarray*}
    &&\tau_1(x)=\bar\tau(p\pi(x)), \qquad x\in A\\
    &&\tau_2(x)=\bar\tau((1-p)\pi(x)),\qquad x\in A.
  \end{eqnarray*}
  and $\tau_1\ne0$, $\tau_2\ne0$. By normalizing $\tau_1$ and $\tau_2$ we get that $\tau$ is a non-trivial convex combination of elements from $QT(A)$, which contradicts that $\tau$ is extreme.
\end{proof}

\section{The main result}
Recall that a $C^*$-algebra $A$ is exact if for all pairs $(B,J)$ of a $C^*$-algebra $B$ and a closed two sided ideal $J$ in $B$, the sequence
$$\xymatrix{0\ar[r]&A\otimes J\ar[r]&A\otimes B \ar[r]&A\otimes B/J\ar[r]&0}
$$
is exact. Here the tensor product $\otimes$ denotes the spatial\,(=minimal) tensor product of the $C^*$-algebras involved (cf.~\cite{K1}).
It is well known, that nuclear $C^*$-algebras and subalgebras of nuclear algebras are exact. Recently in \cite{K2}, Kirchberg proved that the class of exact $C^*$-algebras coincide with the class of quotients of subalgebras of nuclear $C^*$-algebras. In particular,
\begin{pro}[\cite{K2}]\label{p:5.1}
  Any quotient $C^*$-algebra of an exact $C^*$-algebra is exact.
\end{pro}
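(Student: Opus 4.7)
The plan is to deduce this directly from the characterization of exactness that Kirchberg established in \cite{K2} and which the author has just recalled in the paragraph preceding the statement, namely: a $C^*$-algebra is exact if and only if it is isomorphic to a quotient of a $C^*$-subalgebra of a nuclear $C^*$-algebra. Granted this equivalence, Proposition \ref{p:5.1} is essentially a tautology, and all that is required is to verify that ``quotient of a quotient'' is again a quotient, which is a routine piece of $C^*$-algebra bookkeeping.

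More concretely, let $A$ be exact and let $J\subset A$ be a closed two-sided ideal. By Kirchberg's characterization we may write $A = C/I$, where $C$ is a $C^*$-subalgebra of some nuclear $C^*$-algebra $N$ and $I$ is a closed two-sided ideal of $C$. Denote by $\pi\colon C\twoheadrightarrow A$ the quotient map, and set $\tilde J=\pi^{-1}(J)$. Then $\tilde J$ is a closed two-sided ideal of $C$ containing $I$, and the standard third isomorphism theorem for $C^*$-algebras yields a canonical $*$-isomorphism
$$C/\tilde J \;\xrightarrow{\;\cong\;}\; A/J.$$
Hence $A/J$ is itself (isomorphic to) a quotient of the $C^*$-subalgebra $C\subset N$ of a nuclear $C^*$-algebra. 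Applying Kirchberg's characterization in the other direction, $A/J$ is exact.

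There is no real obstacle here: the entire content of the proposition has been absorbed into the nontrivial direction of Kirchberg's characterization theorem (cited from \cite{K2}), which identifies the quotient-closure of the class of exact algebras with the class of exact algebras itself. The only small point worth mentioning is that in carrying out the argument one does not need to return to the defining tensor-product condition for exactness at all — that verification is already packaged inside the characterization — so the proof in the paper should simply consist of the one-line composition-of-quotients remark above.
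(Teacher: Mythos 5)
Your argument is correct and is exactly the route the paper intends: the proposition is stated as an immediate consequence (``In particular'') of Kirchberg's characterization of exact $C^*$-algebras as quotients of subalgebras of nuclear $C^*$-algebras, and your composition-of-quotients step via the third isomorphism theorem is the only bookkeeping needed. No discrepancy with the paper's approach.
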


\begin{pro}\label{p:5.2}
  $C^*_r(\F_n)$ is an exact $C^*$-algebra for any $n\in\N$, $n\ge 2$ and for $n=\infty$.
\end{pro}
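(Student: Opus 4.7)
My plan is to exhibit $C_r^*(\F_n)$ as a unital $C^*$-subalgebra of a nuclear $C^*$-algebra, and then invoke the standard fact (recalled just before Proposition \ref{p:5.1}) that subalgebras of nuclear $C^*$-algebras are exact. All the embeddings I need have already been constructed in Section 2, so the work is really to identify the correct target and note its nuclearity.

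For $2 \le n < \infty$: I will first apply Remark \ref{r:2.5}(b), which gives a unital embedding $C_r^*(\F_n) \hookrightarrow \fV_n$ obtained by transporting the self-adjoint generators $\tfrac12(u_i+u_i^*)$ to a semicircular system via the monotone homeomorphism $\Phi=F^{-1}\circ G$ of $[-1,1]$. I then compose with the embedding $\fV_n \hookrightarrow \fE_n$ recorded in Section 2, where $\fE_n$ is the Cuntz--Toeplitz extension of $\fO_n$. The key point is that $\fE_n$ is nuclear: this follows from the defining short exact sequence
$$0 \longrightarrow \mathcal{K} \longrightarrow \fE_n \longrightarrow \fO_n \longrightarrow 0,$$
from the well-known nuclearity of $\mathcal{K}$ and of the Cuntz algebra $\fO_n$, and from the fact that nuclearity is preserved under extensions. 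Hence $C_r^*(\F_n)$ sits as a unital $C^*$-subalgebra of the nuclear $C^*$-algebra $\fE_n$, and is therefore exact.

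For $n=\infty$: I use the parallel chain of embeddings $C_r^*(\F_\infty) \hookrightarrow \fV_\infty \hookrightarrow \fO_\infty$. The first is built exactly as in the proof of Theorem \ref{t:2.4}, transporting the free Haar unitaries to a semicircular system by the monotone homeomorphism $\Phi$. The second is the embedding displayed in Section 2. Since $\fO_\infty$ is nuclear, $C_r^*(\F_\infty)$ is a unital subalgebra of a nuclear $C^*$-algebra, hence exact.

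There is really no conceptual obstacle; the proof is essentially a bookkeeping argument that stitches together Remark \ref{r:2.5}(b), the embeddings $\fV_n\hookrightarrow\fE_n$ and $\fV_\infty\hookrightarrow\fO_\infty$ from Section 2, and three standard facts from the general theory (nuclearity of $\mathcal{K}$ and the Cuntz algebras, stability of nuclearity under extensions, and passage of exactness to subalgebras). The only place where one needs to invoke something non-trivial from outside the paper is Cuntz's theorem that $\fO_n$ and $\fO_\infty$ are nuclear, which is by now classical.
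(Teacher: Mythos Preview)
Your argument for finite $n$ is correct and self-contained: the chain $C_r^*(\F_n)\hookrightarrow\fV_n\hookrightarrow\fE_n$ lands in a nuclear algebra, and subalgebras of nuclear algebras are exact.

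For $n=\infty$, however, there is a direction error. The proof of Theorem~\ref{t:2.4} builds the embedding $\fV_\infty\hookrightarrow C_r^*(\F_\infty)$, not the reverse: starting from the Haar unitaries $u_n$ in $C_r^*(\F_\infty)$, one applies $\Phi$ to $y_n=\tfrac12(u_n+u_n^*)$ to produce semicirculars $x_n=\Phi(y_n)$ \emph{inside} $C_r^*(\F_\infty)$. What you need is the opposite inclusion $C_r^*(\F_\infty)\hookrightarrow\fV_\infty$, and that comes from the construction of Remark~\ref{r:2.5}(b), not from Theorem~\ref{t:2.4}: one applies a continuous map $[-1,1]\to\T$ pushing the semicircular law to Haar measure to the generators $x_n$ of $\fV_\infty$, obtaining free Haar unitaries inside $\fV_\infty$. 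Remark~\ref{r:2.5}(b) states this only for finite $n$, but the same argument works verbatim for $n=\infty$. With this correction your proof goes through.

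By contrast, the paper's proof takes a completely different and much shorter route: it simply cites \cite{EH} for the exactness of $C_r^*(\F_2)$, and then reduces every other case to this one via the group embedding $\F_n\hookrightarrow\F_2$ (valid for all $2\le n\le\infty$), which induces a unital inclusion $C_r^*(\F_n)\hookrightarrow C_r^*(\F_2)$; exactness passes to subalgebras. Your approach is more explicit and avoids the external citation, at the cost of importing Cuntz's nuclearity theorem for $\fO_n$ and $\fO_\infty$ and the permanence of nuclearity under extensions. Either route is perfectly adequate for the purposes of the paper.
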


\begin{proof}
  This is well known. The case $n=2$ is in \cite{EH} and the general case follows easily because $\F_n$ can be embedded in $\F_2$ for all $n\ge 2$ including $n=\infty$. One can also use $\S 6$ in \cite{DCH} to get that $C_r^*(\Gamma)$ is exact for any discrete subgroup $\Gamma$  of $SL(2,\R)$, in particular for $\Gamma=\F_n$.
\end{proof}\label{r:5.3}
\begin{rem} More generally $C_r^*(\Gamma)$ is exact for any closed  discrete subgroup $\Gamma$ of an almost connected locally compact group (cf.~\cite{KW}).
\end{rem}

\begin{defn}\label{d:5.4}
  For any free ultrafilter $\fU$ on $\N$, set
  $$I_\fU=\{(x_n)_{n=1}^\infty\in\ell^\infty\{M_n(\C)\}\mid \lim\limits_{\fU}\mathrm{tr}_n(x_n^*x_n)=0\},$$
  where $\mathrm{tr}_n$ is the normalized trace on $M_n(\C)$, and let
  $$M_{\fU}=\ell^\infty\{M_n(\C)\}/I_\fU.$$
\end{defn}
It is well known that $M_\fU$ is a $II_1$-factor with normal trace:
$$\tau_n([x])=\lim\limits_{\fU} \tau_n(x_n),\qquad x=(x_n)_{n=1}^\infty\in\ell^\infty\{M_n(\C)\}.$$
We shall need the following result of Wassermann.
\begin{pro}[\cite{W}]\label{p:5.5}
  Let $\Gamma$ be a residually finite countable discrete ICC-group. Then the $II_1$-factor $L(\Gamma)$ associated with the left regular representation of $\Gamma$ is isomorphic to a subfactor of $M_\fU=\ell^\infty\{M_n(\C)\}/I_\fU$ for some free ultrafilter $\fU$ on $\N$.
  In particular, $L(\F_n)$ has this property for $n=2,3,\ldots$ and $n=\infty$.
  (As usual, $\F_n$  denotes the free group on $n$ generators.)
\end{pro}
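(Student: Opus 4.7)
The plan is to use residual finiteness of $\Gamma$ to produce enough finite-dimensional representations of $\C[\Gamma]$ inside $\ell^\infty\{M_n(\C)\}$ whose trace data matches the canonical trace on $\Gamma$, and then to extend through $C^*_r(\Gamma)$ to a normal embedding of the $II_1$-factor $L(\Gamma)$ into $M_\fU$.

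First I would invoke residual finiteness to choose a decreasing sequence $N_1\supset N_2\supset\cdots$ of finite-index normal subgroups of $\Gamma$ with $\bigcap_k N_k=\{e\}$ and with $d_k:=[\Gamma:N_k]$ strictly increasing (possible since $\Gamma$, being ICC, is infinite). The left regular representation of the finite group $\Gamma/N_k$ on $\C^{d_k}$ is a faithful unital $*$-representation, so composing with the quotient map $\C[\Gamma]\twoheadrightarrow \C[\Gamma/N_k]$ yields unital $*$-homomorphisms $\pi_k\colon\C[\Gamma]\to M_{d_k}(\C)$ with
$$\mathrm{tr}_{d_k}(\pi_k(g))=\begin{cases}1 & g\in N_k,\\ 0 & g\notin N_k,\end{cases}$$
hence $\mathrm{tr}_{d_k}(\pi_k(g))\to\delta_{g,e}=\tau_\Gamma(g)$ for every $g\in\Gamma$, where $\tau_\Gamma$ denotes the canonical tracial state on $L(\Gamma)$; linearity extends the convergence to every $x\in\C[\Gamma]$.

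Next I would assemble these maps into a single map $\Pi\colon\C[\Gamma]\to\ell^\infty\{M_n(\C)\}$ by putting $\pi_k(x)$ in coordinate $n=d_k$ and $0$ in every other coordinate. Pick a free ultrafilter $\fU$ on $\N$ containing the infinite set $S:=\{d_k:k\in\N\}$ and compose $\Pi$ with the quotient $\ell^\infty\{M_n(\C)\}\twoheadrightarrow M_\fU$ to obtain $\tilde\Pi\colon\C[\Gamma]\to M_\fU$. Since $S\in\fU$ the zero-filler coordinates disappear modulo $I_\fU$, so $\tilde\Pi$ is a unital $*$-homomorphism and $\tau_\fU\circ\tilde\Pi=\tau_\Gamma|_{\C[\Gamma]}$. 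Because $\tilde\Pi$ sends group elements to unitaries in $M_\fU$, the universal property of $C^*(\Gamma)$ extends it to a unital trace-preserving $*$-homomorphism $C^*(\Gamma)\to M_\fU$. Faithfulness of $\tau_\fU$ on the $II_1$-factor $M_\fU$, together with the fact that $\delta_e$ is a separating vector for $\lambda(C^*(\Gamma))$, shows that the kernel of this extension equals the kernel of the left regular representation $\lambda$, so one obtains an injective unital $*$-homomorphism $\sigma\colon C^*_r(\Gamma)\hookrightarrow M_\fU$ with $\tau_\fU\circ\sigma=\tau_\Gamma|_{C^*_r(\Gamma)}$.

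Finally I would lift $\sigma$ to an injective normal $*$-homomorphism $\tilde\sigma\colon L(\Gamma)\hookrightarrow M_\fU$. The identity $\tau_\fU\circ\sigma=\tau_\Gamma$ makes $\sigma$ isometric for the $2$-norms, and the closed unit balls of both algebras are $2$-norm complete (classical for $L(\Gamma)$, and for $M_\fU$ it is the content of Proposition~\ref{p:4.2}, resting on Proposition~\ref{p:3.10}). Kaplansky density supplies, for each $x$ in the unit ball of $L(\Gamma)$, a norm-bounded net in $C^*_r(\Gamma)$ converging to $x$ in $\|\cdot\|_2$; its image under $\sigma$ is $\|\cdot\|_2$-Cauchy inside the unit ball of $M_\fU$, and the limit defines $\tilde\sigma(x)$. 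One checks $\tilde\sigma$ is an injective $*$-homomorphism whose image is the $\|\cdot\|_2$-closure of $\sigma(C^*_r(\Gamma))$, which by Proposition~\ref{p:3.12} is an $AW^*$-subalgebra of $M_\fU$ and, being contained in a $II_1$-factor, a von Neumann subalgebra. The ICC hypothesis makes $L(\Gamma)$ a factor, so $\tilde\sigma(L(\Gamma))$ is a subfactor; the ``in particular'' claim follows because free groups are classically residually finite and ICC. The delicate point throughout is this final lifting step: one must bootstrap a trace-preserving $*$-homomorphism on the weakly dense subalgebra $C^*_r(\Gamma)$ to a normal embedding of the ambient $II_1$-factor, and the mechanism that makes it work --- $2$-norm isometry combined with unit-ball completeness in $\|\cdot\|_2$ on both sides --- is exactly the framework the paper develops in Sections~3 and~4.
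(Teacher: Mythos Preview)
The paper does not actually prove Proposition~\ref{p:5.5}: it is stated with the attribution \cite{W} and no proof is supplied, so there is nothing in the paper to compare your argument against. Your reconstruction is correct and is essentially Wassermann's original argument: residual finiteness produces finite quotients $\Gamma/N_k$ whose left regular representations asymptotically recover the canonical trace on $\C[\Gamma]$, one packages these into a trace-preserving map $C^*_r(\Gamma)\to M_\fU$, and then extends by $\|\cdot\|_2$-completeness to $L(\Gamma)$. The only points worth a second look are minor bookkeeping ones (ensuring the indices $d_k$ are distinct so that the placement into $\ell^\infty\{M_n(\C)\}$ is unambiguous, and that the ultrafilter chosen contains $\{d_k\}$), which you have handled, and the normality of the extension $\tilde\sigma$, which follows because on a $II_1$-factor the $\|\cdot\|_2$-topology and the strong topology coincide on bounded sets, so $\|\cdot\|_2$-continuity on the unit ball gives $\sigma$-strong continuity and hence normality.
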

\begin{lem}\label{l:5.6}
  Let $\tau$ be a normalized quasitrace on a unital $C^*$-algebra $A$ and let $\tau_n$ be the (unique) quasitrace on $M_n(A)$ for which $\tau_n(x)=\tau(x\otimes e_{11})$, $x\in A$. Let
  $$\tau_n'(x)=\frac 1n \tau(x), \qquad x\in A.$$
  Then
  \begin{itemize}
    \item[(1)] $\tau'_n(x\otimes 1_n)=\tau(x),\qquad x\in A$
    \item[(2)] $\tau'_n(1\otimes y)=\mathrm{tr}_n(y),\qquad y\in M_n(\C)$.
  \end{itemize}
  Moreover, if $\tau$ is faithful, then  $\tau_n'$ is also faithful.
\end{lem}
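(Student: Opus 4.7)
The approach is to derive all three assertions from the single identity
\begin{equation*}
\tau_n(a\otimes e_{ii})=\tau(a), \qquad a\in A,\ i=1,\ldots,n, \qquad (\ast)
\end{equation*}
which for $a\in A_+$ is immediate from property $(i)$ of Definition \ref{d:3.1} applied to $v=a^{1/2}\otimes e_{i1}$ (so that $v^*v=a\otimes e_{11}$ and $vv^*=a\otimes e_{ii}$), and which extends to arbitrary $a\in A$ by writing $a=a_1-a_2+i(a_3-a_4)$ with $a_j\in A_+$ and invoking properties $(ii)$ and $(iii)$.

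For $(1)$, first take $a\in A_{\mathrm{sa}}$. The elements $\{a\otimes e_{ii}\}_{i=1}^n$ are mutually commuting self-adjoint elements of $M_n(A)$, so together with $1$ they generate an abelian $C^*$-subalgebra that contains $a\otimes 1_n=\sum_i a\otimes e_{ii}$. Property $(iii)$ together with $(\ast)$ then gives
\begin{equation*}
\tau_n(a\otimes 1_n)=\sum_{i=1}^n \tau_n(a\otimes e_{ii})=n\tau(a).
\end{equation*}
For general $a=b+ic$ with $b,c\in A_{\mathrm{sa}}$ this extends by property $(ii)$, and dividing by $n$ yields $(1)$.

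For $(2)$, the restriction of $\tau_n$ to $1_A\otimes M_n(\C)\cong M_n(\C)$ inherits properties $(i)$--$(iii)$, so it is a $1$-quasitrace on the finite dimensional algebra $M_n(\C)$, and is therefore a trace (a standard fact for finite dimensional $C^*$-algebras). By uniqueness of the tracial state on $M_n(\C)$ combined with $\tau_n(1\otimes 1_n)=n$ from $(1)$, this restriction must equal $n\cdot\mathrm{tr}_n$, and division by $n$ gives $(2)$.

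For faithfulness, suppose $\tau_n(x^*x)=0$ for $x=\sum_{i,j}a_{ij}\otimes e_{ij}\in M_n(A)$. Fix $j$ and set $y_j=x(1\otimes e_{j1})$. Then $y_j^*y_j=\bigl(\sum_i a_{ij}^*a_{ij}\bigr)\otimes e_{11}$ while $y_jy_j^*=x(1\otimes e_{jj})x^*\le xx^*$, so combining property $(i)$, Corollary \ref{c:3.4}(1) and $(\ast)$ yields
\begin{equation*}
\tau\!\Bigl(\sum_i a_{ij}^*a_{ij}\Bigr)=\tau_n(y_j^*y_j)=\tau_n(y_jy_j^*)\le\tau_n(xx^*)=\tau_n(x^*x)=0.
\end{equation*}
One further use of order preservation gives $\tau(a_{ij}^*a_{ij})=0$ for every $i$, and faithfulness of $\tau$ forces $a_{ij}=0$, whence $x=0$. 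The proof is essentially bookkeeping with the quasitrace axioms; the only mildly subtle ingredient is the classical fact that $1$-quasitraces on $M_n(\C)$ are automatically traces, which is where the uniqueness needed in $(2)$ comes from.
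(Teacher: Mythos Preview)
Your proof is correct and follows essentially the same route as the paper's. The only cosmetic differences are that the paper obtains $(\ast)$ via unitary conjugation rather than your $v=a^{1/2}\otimes e_{i1}$ trick, and for faithfulness the paper multiplies $x$ on both sides by matrix units and invokes Lemma~\ref{l:3.5}(3) to get $\|a_{ij}\otimes e_{11}\|_2=0$ for each $i,j$ individually, whereas you compress on one side only and use $y_jy_j^*\le xx^*$ together with order preservation; both arguments are equivalent in spirit.
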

\begin{proof}
  From Definition \ref{d:3.1}(1) we have
  \begin{equation}\label{e:page5.3}
    \tau(uau^*)=\tau(a), \quad a\in A_n, u\in U(A)
  \end{equation}
  where $U(A)$ is the unitary group of $A$. By Definition \ref{d:3.1}(2), Equation (\ref{e:page5.3}) holds for all $a\in A_{\mathrm{sa}}$.
  Hence
  $$\tau_n(a\otimes e_{kk})=\tau_n(a\otimes e_{11})=\tau(a),\quad a\in A_{\mathrm{sa}}.$$
and since $(a\otimes e_{kk})_{k=1}^n$ are contained in an abelian $C^*$-subalgebra of $M_n(A)$
$$\tau_n'(a\otimes 1_n)=\frac 1n \sum_{k=1}^n \tau_n(a\otimes e_{kk}) =\tau(a), \qquad a\in A_{\mathrm{sa}}.$$
By Definition \ref{d:3.1}(3) this can be extended to all $a\in A$, proving (1).
(2) holds because $\mathrm{tr}_n$ is the unique normalized quasitrace on $M_n(\C)$. Assume next that $\tau$ is faithful on $A$, and let
$$x=\sum_{i,j=1}^n x_{ij}\otimes e_{ij}$$
  be an element of $M_n(A)$ for which $\tau'_n(x^*x)=0$.
  By Lemma \ref{l:3.5}(3) also
  $$||x_{ij}\otimes e_{11}||_2=||(1\otimes e_{1i})x (1\otimes e_{j1})||_2=0$$
  where $||z||_2=\tau_n'(z^*z)^{1/2}$ for $z\in M_n(A)$. Hence
  $$\tau(x_{ij}^*x_{ij})=n\tau_n'(x_{ij}^*x_{ij}\otimes e_{jj})=0,\qquad 1\le 1,j\le n.$$
  Thus $x_{ij}=0$ for all $i,j$ proving $x=0$.
  Hence $\tau'_n$ is faithful.
\end{proof}

\begin{lem}\label{l:5.7}
  Let $A$ be a unital exact $C^*$-algebra with a faithful normalized quasitrace $\tau$. Then for any free ultrafilter $\fU$ on $\N$, the spatial $C^*$-tensor product
  $$A\otimes_{\min} M_\fU$$
  can be embedded in a finite $AW^*$-algebra $N$  with a faithful normal quasitrace $\bar\tau$ for which
  \begin{eqnarray*}
    &&\bar\tau (x\otimes 1)=\tau(x), \qquad x\in A\\
    &&\bar \tau(1\otimes y)=\tau_\fU(y),\qquad y\in M_\fU.
  \end{eqnarray*}
\end{lem}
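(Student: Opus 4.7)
The plan is to realize $N$ as the quasitrace ultraproduct of the sequence $\{(M_n(A),\tau'_n)\}_{n=1}^\infty$, where $\tau'_n=\frac{1}{n}\tau_n$ is the normalized quasitrace of Lemma \ref{l:5.6}. By that lemma, each $\tau'_n$ is a faithful normalized quasitrace on $M_n(A)$ satisfying $\tau'_n(x\otimes 1_n)=\tau(x)$ and $\tau'_n(1\otimes y)=\mathrm{tr}_n(y)$. Applying Proposition \ref{p:4.2} to this sequence yields
$$N:=\ell^\infty\{M_n(A)\}/\fJ_\fU,$$
a finite $AW^*$-algebra with faithful normal quasitrace $\bar\tau([(x_n)])=\lim_\fU \tau'_n(x_n)$; this will be the target algebra.

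Next I would consider the $*$-homomorphism
$$\phi:A\otimes_{\min}\ell^\infty\{M_n(\C)\}\to \ell^\infty\{M_n(A)\},\qquad \phi\bigl(a\otimes(y_n)_n\bigr)=(a\otimes y_n)_n,$$
obtained as the product of the maps $\mathrm{id}_A\otimes\mathrm{ev}_n:A\otimes_{\min}\ell^\infty\{M_n(\C)\}\to A\otimes_{\min}M_n(\C)=M_n(A)$. I claim $\phi$ sends $A\otimes_{\min} I_\fU$ into $\fJ_\fU$. Indeed, for an elementary tensor $a\otimes(y_n)$ with $(y_n)\in I_\fU$, order-preservation of $\tau'_n$ (Corollary \ref{c:3.4}) together with Lemma \ref{l:5.6} yields
$$\tau'_n\bigl((a\otimes y_n)^*(a\otimes y_n)\bigr)=\tau'_n(a^*a\otimes y_n^*y_n)\le \|a\|^2\,\mathrm{tr}_n(y_n^*y_n)\to 0\text{ along }\fU.$$
Since $\phi^{-1}(\fJ_\fU)$ is a closed two-sided ideal containing the algebraic tensor product $A\odot I_\fU$, it contains its closure $A\otimes_{\min} I_\fU$. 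Now exactness of $A$ (Proposition \ref{p:5.1}) produces the exact sequence
$$0\to A\otimes_{\min} I_\fU\to A\otimes_{\min}\ell^\infty\{M_n(\C)\}\to A\otimes_{\min} M_\fU\to 0,$$
so composing $\phi$ with the quotient $\ell^\infty\{M_n(A)\}\twoheadrightarrow N$ descends to a $*$-homomorphism
$$\psi:A\otimes_{\min} M_\fU\to N.$$
The two tracial identities $\bar\tau(\psi(x\otimes 1))=\tau(x)$ and $\bar\tau(\psi(1\otimes y))=\tau_\fU(y)$ then follow by direct computation from Lemma \ref{l:5.6}(1)--(2) together with the definitions of $\bar\tau$ and $\tau_\fU$.

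The remaining task, which I expect to be the main obstacle, is to verify that $\psi$ is injective, so that one genuinely has an embedding. Equivalently, one needs the reverse inclusion $\phi^{-1}(\fJ_\fU)\subseteq A\otimes_{\min} I_\fU$, or equivalently that the pulled-back quasitrace $\bar\tau\circ\psi$ on $A\otimes_{\min} M_\fU$ is faithful. This pulled-back quasitrace already restricts to $\tau$ on $A\otimes 1$ and to $\tau_\fU$ on $1\otimes M_\fU$, both faithful; hence $\ker\psi$ is a closed two-sided ideal in $A\otimes_{\min} M_\fU$ meeting neither distinguished commuting copy non-trivially. The plan for this step is to exploit exactness of $A$ in a slice-map / Kirchberg-type fashion to force $\ker\psi=0$: exactness is precisely the hypothesis that controls the closed-ideal structure of $A\otimes_{\min} M_\fU$, and this is where it does essential work beyond its role in constructing $\psi$ above.
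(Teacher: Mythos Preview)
Your construction of $N=\ell^\infty\{M_n(A)\}/\fJ_\fU$ and of the map $\psi$ via $A\otimes_{\min}\ell^\infty\{M_n(\C)\}\to\ell^\infty\{M_n(A)\}\to N$, together with the use of exactness of $A$ to make $\psi$ descend to $A\otimes_{\min}M_\fU$, is exactly the paper's argument. The tracial identities are also checked the same way.

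The gap is in your plan for injectivity. You propose to invoke exactness of $A$ a second time, in some slice-map form, to conclude that a closed ideal of $A\otimes_{\min}M_\fU$ meeting neither factor must vanish. That is not how the paper proceeds, and exactness is not the relevant tool here; it has already done its work in producing $\psi$. What the paper uses instead is that $M_\fU$ is a $II_1$-factor and hence a \emph{simple} unital $C^*$-algebra. Concretely: write $\beta(\xi)=\|\psi(\xi)\|$ for $\xi\in A\odot M_\fU$. Your argument already gives $\beta\le\|\cdot\|_{\min}$. Since $\|\cdot\|_{\min}$ is the minimal $C^*$-norm, it suffices to show that $\beta$ is a $C^*$-\emph{norm} (not merely a seminorm); then automatically $\beta=\|\cdot\|_{\min}$ and $\psi$ is isometric. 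By a standard fact (see Sakai \cite{S}), for $\beta$ to be a norm it is enough that $\beta(x\otimes z)=0$ forces $x=0$ or $z=0$. If $\pi(x)\rho(z)=0$ in $N$, then $\{w\in M_\fU:\pi(x)\rho(w)=0\}$ is a closed two-sided ideal of the simple algebra $M_\fU$, hence equals $\{0\}$ or $M_\fU$; in the first case $z=0$, in the second $\pi(x)=\pi(x)\rho(1)=0$, so $x=0$ by faithfulness of $\tau=\bar\tau\circ\pi$.

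So the missing ingredient is simplicity of $M_\fU$ together with minimality of the spatial norm, not a further appeal to exactness.
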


\begin{proof}
  Let $N=\ell^\infty\{M_n(A)\}/\fJ_\fU$, where
  $$\fJ_\fU=\{(x_n)_{n=1}^\infty \in \ell^\infty\{M_n(A)\} \mid \lim \limits_{\fU} \tau'_n (x_n^*x_n)=0 \}.$$

By Proposition \ref{p:4.2} and Lemma \ref{l:5.6}, $N$ is a finite $AW^*$-algebra with faithful normal quasitrace $\bar \tau$  given by
$$\bar \tau ([x])=\lim\limits_{\fU} \tau'_n(x),\qquad x=(x_n)_{n=1}^\infty\in \ell^\infty(M_n(A)),$$
where $z\to [z]$ is the quotient map from $\ell^\infty\{M_n(A)\}$ to $N$.
Define  a unital *-homomorphism  $\pi:A\to N$ by
$$\pi(x)=[(x\otimes 1_n)_{n=1}^\infty].$$
By Lemma \ref{l:5.6}(1)
$$\bar\tau\circ \pi(x)=\tau(x), \qquad x\in A.$$
In particular, $\pi$ is one-to-one.
Since by Lemma \ref{l:5.6}(2),
$$\tau'_n(1\otimes y)=\mathrm{tr}_n(y),$$
there is a one-to-one unital *-homomorphism $\rho:M_\fU\to N$ such that
$$\rho([(x_n)_{n=1}^\infty])=[(1\otimes x_n)_{n=1}^\infty],$$
for $(x_n)_{n=1}^\infty\in \ell^\infty\{M_n(\C)\}$, and moreover
$$\bar\tau\circ \rho=\tau_\fU.$$
It is clear that $\pi(A)$ and $\rho(M_\fU)$  are commuting subalgebras of $N$. The map
$$\beta:\sum_{i=1}^\ell x_i\otimes z_i\to ||\sum_{i=1}^\ell \pi(x_i)\rho(z_i)||$$
  defines a $C^*$-semi-norm on the algebraic tensor product $A\odot M_\fU$. To prove that $\beta$ is a norm, it suffices to prove that $\beta(x\otimes z)=0$ implies $x=0$ or $z=0$. (See e.g. the section on tensor products of $C^*$-algebras of Sakai's book \cite{S}). Recall that $M_\fU$ is a $II_1$-factor and therefore a simple unital $C^*$-algebra. Assume $x\in A$, $z\in M_\fU$ and $\beta(x\otimes z)=0$. Since
  $$I=\{w\in M_\fU\mid \pi(x) \rho (w)=0\}$$
  is a two sided ideal in $M_\fU$, either $I=\{0\}$ or $I=M_\fU$. In the first case $z=0$ and in the second case $x=0$ proving that $\beta$ is a $C^*$-norm on $A\odot M_\fU$, so with standard notation for $C^*$-norms on tensor products,
  $$\min \le \beta\le \max.$$

  To prove $\beta=\min$, we need the assumption that $A$ is exact:
  Let $\ell\in \N$, $x_1,\ldots,x_\ell\in A$ and $y_1,\ldots, y_\ell\in \ell^\infty(M_n(\C))$ and let $[y_i]$  be the range of $y_i$ in $M_\fU$ by the quotient map. Write $y_i=((y_i)_n)_{n=1}^\infty$ where $(y_i)_n\in M_n(\C)$. Then
$$\sum_{i=1}^\ell \pi(x_i)\rho([y_i])  =[\left(\sum_{i=1}^\ell x_i\otimes (y_i)_n\right)_{n=1}^\infty],$$
  where as above $[\,\cdot\,]$ on the right side of the equality denotes the quotient map $\ell^\infty(M_n(A))\to N$.
  Hence $$\beta(\sum_{i=1}^\ell x_i\otimes [y_i])\le \sup\limits_{n\in\N} ||\sum_{i=1}^\ell x_i\otimes (y_i)_n||_{\min}=||\sum_{i=1}^\ell x_i\otimes y_i||_{\min}.$$
  Hence the map
  $$\sum_{i=1}^\ell x_i\otimes y_i\to \sum_{i=1}^\ell \pi(x)\rho([y_i]),\qquad x_i\in A, y_i\in \ell^\infty\{M_n(\C)\}$$
  extends to a $*$-homomorphism
  $\phi:A\otimes \ell^\infty\{M_n(\C)\}\to C^*(\pi(A),\rho(M_\fU)).$
  Note that $\beta([z])=||\phi(z)||$, $z\in A\otimes \ell^\infty\{M_n(\C)\}$.
  For $x\in A$, and $y\in \ell^\infty\{M_n(\C)\}$,
  \begin{eqnarray*}
    \bar\tau\circ\phi((x\otimes y)^*(x\otimes y))&=&\bar \tau (\rho([y])^*\pi(x^*x)\rho([y])\\
    &\le& ||x||^2\bar\tau(\rho([y^*y])\\
    &=&||x||^2\lim\limits_{\fU} \mathrm{tr}_n (y_n^*y_n).
  \end{eqnarray*}
 Since $\bar\tau$ is faithful, it follows that $\ker\phi$ contains $A\otimes I_\fU$. Therefore the $C^*$-tensor norm $\beta$ on $A\otimes M_\fU$
  is less or equal to the norm on $A\odot M_\fU$ coming from the quotient
  $$A\otimes \ell^\infty\{M_n(\C)\}/A\otimes I_\fU.$$
  However, exactness of $A$ implies that the latter norm is the minimal $C^*$-tensor norm.
  Hence $\beta\le \min$, so altogether $\beta=\min$.  This shows that the map
  $$\sum_{i=1}^\ell x_i\otimes z_i\to \sum_{i=1}^\ell \pi(x_i)\rho(z_i),\qquad x_i\in A,z_i\in M_\fU$$
  extends to a one-to-one *-homomorphism of $A\otimes M_\fU$ into $N$ with the desired properties.
\end{proof}
\begin{lem}\label{l:5.8}
  Let $N$ be a finite $AW^*$-algebra with a faithful normal quasitrace $\tau$ and let $A$ and $C$ be two commuting unital $C^*$-subalgebras of $N$.
  Let $B$ be the $AW^*$-subalgebra of $N$ generated by $A$.
  If
  \begin{itemize}
    \item[($i$)] $||\sum_{i=1}^\ell a_ic_i||\le ||\sum_{i=1}^\ell a_i\otimes c_I||_{\min}$,  $\ell\in\N$, $a_1,\ldots,a_\ell\in A$, $c_1,\ldots,c_\ell\in C$
          \end{itemize}
          and
  \begin{itemize}
    \item[($ii$)] $C$ is an exact $C^*$-algebra,
  \end{itemize}
  then\\
   $$||\sum_{i=1}^\ell b_ic_i||\le ||\sum_{i=1}^\ell b_i\otimes c_i||_{\min},\qquad \ell\in\N,  b_1,\ldots,b_\ell\in B, c_1,\ldots,c_\ell\in C.$$
\end{lem}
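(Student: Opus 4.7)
The plan is to realize $B$ concretely as a quotient of a $C^*$-algebra of $d_\tau$-Cauchy sequences in $A$, and then exploit the exactness of $C$ to tensor this presentation with $C$.

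Since $\tau$ is faithful on $N$ and $A$ is unital, $\tau|_A$ is a faithful quasitrace on $A$. By Proposition~\ref{p:3.12}, $B$ is the $d_\tau$-closure of $A$ inside $N$, and by Proposition~\ref{p:4.4} together with the discussion preceding Definition~\ref{d:4.5}, $B$ is canonically $*$-isomorphic to $\tilde A/\tilde I$, where $\tilde A\subset \ell^\infty(A)$ is the $C^*$-subalgebra of $d_\tau$-Cauchy sequences in $A$, $\tilde I$ is the ideal of $d_\tau$-null sequences, and the quotient map $q:\tilde A\to B$ sends a sequence to its $d_\tau$-limit (which exists in $N$ by Proposition~\ref{p:3.10}).

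Next, I will build a $*$-homomorphism $\pi:\tilde A\otimes C\to N$ satisfying $\pi((a_n)\otimes c)=q((a_n))\,c$. Since $A$ and $C$ commute in $N$, the formula $(a_n)\otimes c\mapsto (a_nc)_n$ defines a $*$-homomorphism $\sigma_0:\ell^\infty(A)\odot C\to \ell^\infty(N)$ on the algebraic tensor product. For any finite sum $x=\sum_{i=1}^\ell (a^{(i)})\otimes c_i$ with $a^{(i)}=(a_n^{(i)})_n\in\ell^\infty(A)$, hypothesis (i) combined with the contractivity of the $*$-homomorphism $\mathrm{ev}_n\otimes \mathrm{id}_C:\ell^\infty(A)\otimes C\to A\otimes C$ gives
$$\|\sigma_0(x)\|_{\ell^\infty(N)}=\sup_{n}\Bigl\|\sum_i a_n^{(i)}c_i\Bigr\|_N \le \sup_n\Bigl\|\sum_i a_n^{(i)}\otimes c_i\Bigr\|_{A\otimes C}\le \|x\|_{\ell^\infty(A)\otimes C},$$
so $\sigma_0$ extends to a contractive $*$-homomorphism $\sigma:\ell^\infty(A)\otimes C\to \ell^\infty(N)$. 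I claim that $\sigma$ carries $\tilde A\otimes C$ into the subalgebra $\tilde N\subset\ell^\infty(N)$ of $d_\tau$-Cauchy sequences in $N$: for an elementary tensor this follows from Lemma~\ref{l:3.5}(3), and for a general element of $\tilde A\otimes C$ one approximates in norm by finite sums and passes to the limit by a $3\varepsilon$-argument, using the pointwise bound $\|z\|_2\le\tau(1)^{1/2}\|z\|$. Composing $\sigma|_{\tilde A\otimes C}$ with the $*$-homomorphism $\tilde N\to N$ that sends a $d_\tau$-Cauchy sequence to its $d_\tau$-limit (which is a $*$-homomorphism by Lemma~\ref{l:3.7}) yields the desired $\pi$.

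Finally, $\pi$ vanishes on $\tilde I\otimes C$ by construction, since $(a_n)\in\tilde I$ forces $a_nc\to 0$ in $d_\tau$. The exactness of $C$ applied to the short exact sequence $0\to \tilde I\to\tilde A\to B\to 0$, together with the symmetry of the spatial tensor product, yields the exactness of
$$0\to \tilde I\otimes C\to \tilde A\otimes C\to B\otimes C\to 0,$$
so $\pi$ descends to a $*$-homomorphism $\bar\pi:B\otimes C\to N$ with $\bar\pi(b\otimes c)=bc$. Being a $*$-homomorphism it is contractive, which is exactly the conclusion of the lemma. I expect the main obstacle to be the verification in the previous paragraph that $\sigma(\tilde A\otimes C)\subseteq \tilde N$; once this is settled, the exactness of $C$ provides a clean algebraic finish.
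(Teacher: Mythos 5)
Your proposal is correct and follows essentially the same route as the paper: both realize $B$ as $\tilde A/\tilde I$, use hypothesis ($i$) coordinatewise to get a $*$-homomorphism defined on $\tilde A\otimes_{\min}C$ that kills $\tilde I\otimes C$, and then invoke exactness of $C$ to identify $(\tilde A\otimes C)/(\tilde I\otimes C)$ with $B\otimes_{\min}C$. The only cosmetic difference is that you factor through $\ell^\infty(N)$ and the $d_\tau$-limit $*$-homomorphism on Cauchy sequences, where the paper instead bounds the limit directly via the $d_\tau$-closedness of norm balls (Lemma~\ref{l:3.8}); the substance is the same.
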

\begin{proof}
  Note first that by Proposition \ref{p:3.12} and Lemma \ref{l:3.11} every element of $B$ is the $d_\tau$-limit of a bounded sequence in $A$. Hence, by Lemma \ref{l:3.7}, $B$ and $C$ also commute. By the remarks prior to Proposition \ref{p:4.4},
  $$B=\tilde A/\tilde I$$
  where
  \begin{eqnarray*}
    &&\tilde A=\{(x_n)\in \ell^\infty (A)\mid x_n \text{ is a $d_\tau$-Cauchy sequence }\}\\
    &&\tilde I=\{(x_n)\in \ell^\infty(A)\mid x_n\to 0 \text{ in the  $d_\tau$-metric } \}
  \end{eqnarray*}
  and the quotient map $\phi:\tilde A\to B$ is given by
  $$\phi((a_n)_{n=1}^\infty)=d_\tau\text{-}\!\!\!\lim\limits_{n\to\infty} a_n.$$
  Since $B$ and $C$ commute, we can define a *-homomorphism
  $$\psi_0:\tilde A \odot C\to C^*(B,C)\subset N$$
  by
  $$\psi_0(\sum_{i=1}^\infty a_i\otimes c_i)=\sum_{i=1}^\ell \phi(a_i) c_i.$$
  Since $\phi(a_i)=d_\tau\text{-}\!\!\!\lim\limits_{n\to\infty}(a_i)_n$ we get from Lemma \ref{l:3.7} that
  $$\sum_{i=1}^\ell \phi(a_i)c_i=d_\tau\text{-}\!\!\!\lim\limits_{n\to\infty}\sum_{i=1}^\infty (a_i)_nc_i.$$
  By Lemma \ref{l:3.8}, the $t$-ball  of $N$
  $$N_t=\{x\in\N\mid ||x||\le t\}$$
  is closed in the $d_\tau$-metric for all $t>0$. Hence
  $$||\sum_{i=1}^\ell \phi(a_i)c_i||\le \sup\limits_{n\in\N} ||\sum_{i=1}^\ell (a_i)_nc_i||$$
  and therefore by Condition $(i)$ in the lemma,
  $$\sum_{i=1}^\ell \phi(a_i)c_i||\le \sup\limits_{n\in\N}|| \sum_{i=1}^\ell (a_i)_n\otimes c_i||_{\min}=||\sum_{i=1}^\ell a_i\otimes c_i||_{\min}$$
  where the first $||\cdot||_{\min}$ is in $A\otimes C$ and the second in $\tilde A \otimes C$. The last equality follows from the inclusions
  $$\tilde A \otimes C\subset \ell^\infty(A)\otimes C\subset \ell^\infty(A\otimes C).$$
  This shows that $\psi_0$ extends to a *-homomorphism
  $$\psi:\tilde A\otimes C \to C^*(B,C).$$
  The kernel of $\psi$ clearly contains
  $$\ker \phi \otimes C=\tilde I \otimes C.$$
  Since $C^*(B,C)=(\tilde A\otimes C)/\ker \phi$ the $C^*$-seminorm on $B\odot C$ inherited from $C^*(B,C)$ is dominated by the $C^*$-norm on $B\otimes C$ coming from
  $$(\tilde A\otimes C)/(\tilde I \otimes C).$$
  However, by exactness of $C$, the latter norm is equal to the minimal tensor norm on $B\odot C$. This proves Lemma \ref{l:5.8}.
\end{proof}

\begin{rem}\label{r:5.9}
  Kirchberg has proved that exactness for a $C^*$-algebra is equivalent to the properties $C$ and $C'$ of Archbold and Batty (see Section 7 in \cite{K2} and \cite{AB}). Lemma \ref{l:5.8} can be considered as an $AW^*$-analogue of the implication exact $\Rightarrow$ property $C'$.
\end{rem}

\begin{lem}\label{l:5.10}
  Let $A$ be a unital exact $C^*$-algebra with a faithful quasitrace $\tau$. Let $M_\tau$ be the $AW^*$-completion of $A$ with respect to the $\tau$ (cf.~ Definition \ref{d:4.5}).
  Then
  $$M_\tau\otimes C_r^*(\F_\infty)$$
  can be embedded in a finite $AW^*$-algebra.
\end{lem}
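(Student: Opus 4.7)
The plan is to combine Lemma \ref{l:5.7}, Proposition \ref{p:5.5}, and Lemma \ref{l:5.8}: first realize $A\otimes_{\min} M_\fU$ inside a finite $AW^*$-algebra, then use Wassermann's theorem to embed $C_r^*(\F_\infty)$ into $M_\fU$, and finally promote the resulting commuting inclusion from $A$ to its $AW^*$-completion $M_\tau$. Concretely, since $A$ is exact with faithful quasitrace $\tau$, Lemma \ref{l:5.7} provides a finite $AW^*$-algebra $N$ with faithful normal quasitrace $\bar\tau$ and commuting unital $*$-embeddings $\pi:A\hookrightarrow N$ and $\rho:M_\fU\hookrightarrow N$, whose joint $C^*$-subalgebra realizes $A\otimes_{\min} M_\fU$ isometrically, with $\bar\tau\circ\pi=\tau$ and $\bar\tau\circ\rho=\tau_\fU$. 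By Proposition \ref{p:5.5}, $\fU$ may be chosen so that $L(\F_\infty)$, and hence $C=C_r^*(\F_\infty)\subset L(\F_\infty)$, embeds unitally in $M_\fU$; composing with $\rho$ gives a unital inclusion $C\hookrightarrow N$ commuting with $\pi(A)$.

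I then apply Lemma \ref{l:5.8} to this pair. Since the spatial tensor norm is injective on $C^*$-inclusions, $A\otimes_{\min} C$ sits isometrically inside $A\otimes_{\min} M_\fU$, which by Lemma \ref{l:5.7} gives hypothesis $(i)$ of Lemma \ref{l:5.8}; hypothesis $(ii)$, the exactness of $C$, is Proposition \ref{p:5.2}. Let $B$ be the $AW^*$-subalgebra of $N$ generated by $\pi(A)$: by Proposition \ref{p:3.12} together with Proposition \ref{p:4.4}, $B$ is canonically isomorphic, as an $AW^*$-algebra with its distinguished quasitrace, to the $AW^*$-completion $M_\tau$ of $(A,\tau)$. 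Lemma \ref{l:5.8} then yields
$$||\sum_i b_ic_i||\le ||\sum_i b_i\otimes c_i||_{\min},\qquad b_i\in M_\tau,\ c_i\in C,$$
so the multiplication map on $M_\tau\odot C$ is continuous in the minimal tensor norm and extends to a $*$-homomorphism $\psi:M_\tau\otimes_{\min} C\to N$.

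The remaining and most delicate step is to show that $\psi$ is injective; this is where I expect to need the simplicity of $C_r^*(\F_\infty)$. If $\beta$ denotes the $C^*$-seminorm on $M_\tau\odot C$ induced by $\psi$, then $\beta\le||\cdot||_{\min}$ by the above, so by minimality of the spatial norm it would suffice to show that $\beta$ is a norm. Following the argument in the proof of Lemma \ref{l:5.7} (via the criterion from Sakai's book on tensor products of $C^*$-algebras), it is enough to check that $\beta(b\otimes c)=0$ forces $b=0$ or $c=0$. But if $b\in M_\tau$, $c\in C$ satisfy $bc=0$ in $N$, then $\{c'\in C : bc'=0\}$ is a closed two-sided ideal of $C=C_r^*(\F_\infty)$; by simplicity it is either $\{0\}$ or all of $C$, and in the latter case $b=b\cdot 1=0$. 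Hence $\psi$ is an isometric $*$-embedding of $M_\tau\otimes C_r^*(\F_\infty)$ into the finite $AW^*$-algebra $N$, completing the proof.
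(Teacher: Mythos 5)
Your proposal is correct and follows essentially the same route as the paper: Lemma \ref{l:5.7} plus Proposition \ref{p:5.5} to place $A$ and $C_r^*(\F_\infty)$ as commuting subalgebras of a finite $AW^*$-algebra $N$, identification of the $AW^*$-subalgebra generated by $A$ with $M_\tau$ via Propositions \ref{p:3.12} and \ref{p:4.4}, Lemma \ref{l:5.8} with exactness of $C_r^*(\F_\infty)$ to get the seminorm dominated by the minimal norm, and simplicity of $C_r^*(\F_\infty)$ (as in the proof of Lemma \ref{l:5.7}) to see the seminorm is a norm, hence equal to the minimal norm. Your injectivity argument via the ideal $\{c'\in C: bc'=0\}$ is exactly the paper's argument transplanted from Lemma \ref{l:5.7}, so nothing is missing.
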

\begin{proof}
  Let $\fU$ be a free ultrafilter on $\N$.
  By Lemma \ref{l:5.7} $A\otimes M_\fU$ can be embedded in a finite $AW^*$-algebra $N$ with a faithful trace $\bar\tau$ such that
  $$\bar\tau(x\otimes 1)=\tau(x), \qquad x\in A.$$

  Note that $C_r^*(\F_\infty)\subset L(\F_\infty)$, the von Neumann algebra associated with the left regular representation of $\F_\infty$. Since $L(\F_\infty)$ has a unital embedding in $M_\fU$ for some ultrafilter $\fU$ on $\N$ (Proposition \ref{p:5.5}) we get for this $\fU$ that $A\otimes C_r^*(\F_\infty)$ embeds in a finite $AW^*$-algebra $N$ such that
  $$\tau(x)=\bar \tau (x\otimes 1),\qquad x\in A$$
  where $\bar \tau$  is a faithful normal quasitrace on $N$.
  But the $AW^*$-completion $M_\tau$ of $A$ with respect to $\tau$  coincides with the smallest $AW^*$-subalgebra of $N$ containing $A$ (cf.~Proposition \ref{p:4.4} and Definition \ref{d:4.5}). Since $C_r^\star(\F_\infty)$  is exact it follows from Lemma \ref{l:5.8} that
  $$||\sum_{i=1}^\ell a_ib_i||\le ||\sum_{i=1}^\ell a_i\otimes b_i||_{\min}$$
  for all $a_{1},\ldots,a_\ell\in M_\tau$ and $b_1,\ldots,b_\ell\in C_r^*(\F_\infty)$.
  Since $C_r^*(\F_\infty)$ is simple by \cite{AO}, we get as in the proof of Lemma \ref{l:5.7} that $||\sum_{i=1}^\ell a_ib_i||$ defines a $C^*$-norm on $M_\tau\odot C_r^*(\F_\infty)$. Hence
  $$||\sum_{i=1}^\infty a_ib_i||=||\sum_{i=1}^\infty a_i\otimes b_i||_{\min},$$
  proving Lemma \ref{l:5.10}.

\end{proof}
\begin{thm}\label{t:5.11}
  Quasitraces on exact unital $C^*$-algebras are traces.
\end{thm}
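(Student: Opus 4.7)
The plan is to argue by contradiction. Suppose $\tau$ is a normalized quasitrace on the exact unital $C^*$-algebra $A$ which fails to be a trace. The first move is to reduce to the case where $\tau$ is both extreme in $QT(A)$ and faithful. The set $QT(A)$ is convex and compact in the topology of pointwise convergence: the axioms force a uniform bound $|\tau(x)|\le 2\|x\|$ (via states on $C^*(a,1)$ and $C^*(b,1)$ for the real/imaginary parts), and the defining conditions are preserved under pointwise limits. For each pair $a,b\in A$ the map $\tau\mapsto\tau(ab)-\tau(ba)$ is continuous and affine, so by Bauer's maximum principle applied to the real part of an appropriate scalar multiple, I can locate an extreme point $\tau_0\in QT(A)$ that is still not a trace. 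Passing to the quotient by $I_0=\{x\mid\tau_0(x^*x)=0\}$, which remains exact by Proposition \ref{p:5.1} and leaves the induced quasitrace extreme (any convex splitting lifts back), I may also assume $\tau_0$ is faithful.

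With these reductions in place, Proposition \ref{p:4.6} gives that the $AW^*$-completion $(M,\hat\tau)$ of $(A,\tau_0)$ is a finite $AW^*$-\emph{factor} carrying a faithful normal normalized quasitrace $\hat\tau$ extending $\tau_0$; it is therefore of type $I_n$ for some $n<\infty$ or of type $II_1$. Now Lemma \ref{l:5.10} provides a unital embedding of $M\otimes C_r^*(\F_\infty)$ into a finite $AW^*$-algebra $N$, and since a finite $AW^*$-algebra admits no non-unitary isometry, neither does $M\otimes C_r^*(\F_\infty)$. The contrapositive of Theorem \ref{t:2.4}, applied to the unital $C^*$-algebra $M$, then furnishes a tracial state $\sigma$ on $M$.

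To finish, I would invoke the fact that a finite $AW^*$-factor admits at most one normalized quasitrace: in the type $I_n$ case this is the unique trace on $M_n(\C)$, and in the type $II_1$ case this is the classical uniqueness statement noted already in the introduction, obtained from the Blackadar--Handelman bijection between normalized quasitraces and lower semicontinuous dimension functions (the paragraph following Proposition \ref{p:3.2}) combined with the uniqueness of the dimension function on such a factor. Hence $\hat\tau=\sigma$, so $\hat\tau$ is additive, forcing $\tau_0$ - and therefore $\tau$ - to be a trace on $A$, the desired contradiction.

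The main obstacle I expect is really the first step: reformulating the original question - linearity of $\tau$ on an arbitrary exact $C^*$-algebra - as the strictly stronger but much more rigid statement that, on a finite $AW^*$-\emph{factor}, the existence of any single tracial state is already enough to pin down $\hat\tau$ itself as a trace. Once extremality and faithfulness are secured and $M$ is a factor, the conclusion is a near-mechanical combination of Lemma \ref{l:5.10} (to place $M\otimes C_r^*(\F_\infty)$ inside a finite $AW^*$-algebra) and Theorem \ref{t:2.4} (to extract from this the required tracial state on $M$), with factor uniqueness of the normalized quasitrace closing the loop.
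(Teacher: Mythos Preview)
Your argument is essentially the paper's own: reduce to an extreme faithful quasitrace on an exact quotient, pass to its $AW^*$-factor completion via Proposition~\ref{p:4.6}, combine Lemma~\ref{l:5.10} with Theorem~\ref{t:2.4} to force a tracial state on that factor, and then invoke uniqueness of the normalized quasitrace on a finite $AW^*$-factor to conclude. The only differences are cosmetic---you run the argument by contradiction via Bauer's principle where the paper shows directly that every extreme point of $QT(A)$ is a trace and then applies Krein--Milman---and one small slip: the affine functional you should use to locate a bad extreme point is $\tau\mapsto\tau(a+b)-\tau(a)-\tau(b)$ for self-adjoint $a,b$, not $\tau\mapsto\tau(ab)-\tau(ba)$, since for a quasitrace the failure to be a trace is exactly the failure of additivity and it is not immediate that the commutator functional detects this.
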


\begin{proof}
  Let $\tau$ be an extreme point in the compact convex set $QT(A)$ of normalized quasitraces and let
  $$I=\{x\in A\mid \tau(x^*x)=0\}.$$
  Then, $I$ is a norm-closed two sided ideal (cf.~Proposition \ref{p:3.2}) and
  $$\tau(x)=\tau_0(\rho(x))$$
  for a faithful extremal quasitrace $\tau_0$ on $A/I$, where $\rho:A\to A/I$ is the quotient map.
  Moreover, by Proposition \ref{p:4.6}, the $AW^*$-completion of $A/I$ with respect to $\tau_0$  is a $II_1$-$AW^*$-factor $M_{\tau_0}$
  with a (unique) normal faithful quasitrace $\bar\tau_0$ extending $\tau_0$.
  Assume $\tau$ is not linear.
  Then $\bar\tau_0$ fails to be linear. But uniqueness of the dimension function  on a $II_1$-$AW^*$-factor shows that $\bar\tau_0$ is the only normalized quasitrace on $M_{\tau_0}$. In particular, $M_{\tau_0}$  has no trace states.
  Then by Theorem \ref{t:2.4}
  $$M_{\tau_0}\otimes C_r^*(\F_\infty)$$
  has a non-unitary isometry.
  Since $A/I$ is also exact (Proposition \ref{p:5.1}), this contradicts Lemma \ref{l:5.10}. Hence $\tau$ is linear. By the Krein-Milman Theorem, it now follows that  all $\tau\in QT(A)$ are linear.
\end{proof}

\begin{cor}\label{c:5.12}
  Every stably finite unital exact $C^*$-algebra $A$ has a trace state.
\end{cor}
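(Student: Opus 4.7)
The proof should be essentially a one-line combination of Theorem~\ref{t:5.11} with the existence theorem of Blackadar--Handelman that was recalled in the introduction. The plan is as follows.

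First, invoke the Blackadar--Handelman result from \cite{BH} (stated in the introduction) which says that every stably finite unital $C^*$-algebra admits a normalized $2$-quasitrace. Since $A$ is stably finite and unital, this produces some $\tau \in \QT(A)$ in the sense of Definition~\ref{d:3.1}; note that the ``quasitrace'' of Section~3 is precisely the $2$-quasitrace in the terminology of \cite{BH}, so the hypotheses match without any translation.

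Second, since $A$ is exact, Theorem~\ref{t:5.11} applies and forces $\tau$ to be linear. Combined with conditions $(i)$ and $(ii)$ of Definition~\ref{d:3.1} (positivity on positive elements and the decomposition across real/imaginary parts), this means $\tau$ is a positive linear functional on $A$ with $\tau(1)=1$, i.e.~a trace state. This establishes the corollary.

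There is no real obstacle; the entire content has been packaged into Theorem~\ref{t:5.11} and the cited existence result. The only small thing to verify is the terminological coincidence between ``$2$-quasitrace'' in \cite{BH} and ``quasitrace'' in Definition~\ref{d:3.1}, which is explicitly noted in the paragraph preceding Definition~\ref{d:3.1}.
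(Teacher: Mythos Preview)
Your proposal is correct and matches the paper's own proof exactly: the paper's argument is the single line ``By \cite{BH} $A$ has a normalized quasitrace,'' with Theorem~\ref{t:5.11} then implicitly converting it to a trace state. Your additional remarks about terminology and verifying that a linear normalized quasitrace is indeed a trace state are accurate but more detailed than the paper bothers to be.
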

\begin{proof}
  By \cite{BH} $A$ has a normalized quasitrace.
\end{proof}

\begin{cor}\label{c:5.13}
  If an $AW^*$-$II_1$-factor $M$ is generated (as an $AW^*$-algebra) by an exact unital $C^*$-subalgebra $A$, then $M$ is a von Neumann algebra.
\end{cor}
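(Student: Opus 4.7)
My plan is to exploit the unique normalized quasitrace $\tau$ on the type $II_1$ $AW^*$-factor $M$: show via the main theorem (Theorem~\ref{t:5.11}) that its restriction to the exact subalgebra $A$ is linear, and then propagate linearity to all of $M$ using that $A$ is $d_\tau$-dense in $M$. Once $\tau$ is shown to be linear on $M$, the reformulation of Kaplansky's problem recalled in the Introduction immediately gives that $M$ is a von Neumann algebra.

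As a type $II_1$ $AW^*$-factor, $M$ carries a unique normalized quasitrace $\tau$, which is automatically normal and faithful (uniqueness of the dimension function). The restriction $\tau|_A$ is then a normalized quasitrace on the exact unital $C^*$-algebra $A$, so by Theorem~\ref{t:5.11} it is linear on $A$. Moreover, since by hypothesis $M$ is the smallest $AW^*$-subalgebra of itself containing $A$, Proposition~\ref{p:3.12} applied to $(M,\tau)$ tells us that $A$ is $d_\tau$-dense in $M$; by Lemma~\ref{l:3.11} every element of the unit ball $M_1$ is a $d_\tau$-limit of a net in $A_1$, and after scaling this extends to norm-bounded approximation of arbitrary elements.

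The remaining task is to promote linearity of $\tau|_A$ to linearity of $\tau$ on all of $M$. Scalar linearity is automatic from Definition~\ref{d:3.1}(ii)--(iii). For additivity, take $x,y\in M$ and approximate by norm-bounded nets $x_\alpha,y_\alpha\in A$ with $d_\tau(x_\alpha,x),d_\tau(y_\alpha,y)\to 0$. By Lemma~\ref{l:3.7}(2), $x_\alpha+y_\alpha\to x+y$ in $d_\tau$, and passing to the limit in $\tau(x_\alpha+y_\alpha)=\tau(x_\alpha)+\tau(y_\alpha)$ yields $\tau(x+y)=\tau(x)+\tau(y)$, provided we know $\tau$ is $d_\tau$-continuous on norm-bounded subsets of $M$. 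Hence $\tau$ is a linear quasitrace, i.e.~a trace, on $M$.

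The one technical point, which I expect to be the main obstacle, is the $d_\tau$-continuity of $\tau$ on bounded subsets of $M$. Lemma~\ref{l:3.7}(4) gives this on the positive cone. For a bounded self-adjoint $a$, the decomposition $a=a_+-a_-$ together with linearity of $\tau$ on $C^*(a,1)$ reduces matters to showing that $a\mapsto a_\pm$ is $d_\tau$-continuous on norm-bounded self-adjoint subsets of $M$; this follows from Lemma~\ref{l:3.5} by uniformly approximating the continuous function $t\mapsto t_\pm$ on a sufficiently large bounded interval by polynomials, and controlling the polynomial error through $\|f(a)-p(a)\|_2\le \|f-p\|_\infty\,\tau(1)^{1/2}$. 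Property (ii) of Definition~\ref{d:3.1} then extends continuity to arbitrary bounded elements. With linearity of $\tau$ on $M$ established, the equivalent formulation of Kaplansky's problem completes the proof.
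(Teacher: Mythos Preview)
Your proposal is correct and follows the same strategy as the paper: invoke Theorem~\ref{t:5.11} for linearity on $A$, use Proposition~\ref{p:3.12} for $d_\tau$-density of $A$ in $M$, extend linearity to $M$ by continuity, and finish via the Kaplansky equivalence (the paper cites \cite{JW}). The only difference is in the extension step: rather than proving $d_\tau$-continuity of $\tau$ on all bounded elements (which drives you into the $a\mapsto a_\pm$ polynomial-approximation argument), the paper works directly on $M_+$ --- for $a,b\in M_+$ one approximates $a^{1/2},b^{1/2}$ in $d_\tau$ by bounded $c_n,d_n\in A$ (Lemma~\ref{l:3.11}), so that $c_n^*c_n,\,d_n^*d_n\in A_+$ converge to $a,b$ by Lemma~\ref{l:3.7}(3), and then Lemma~\ref{l:3.7}(4) applies directly to give $\tau(a+b)=\tau(a)+\tau(b)$; additivity on $M_+$ then yields linearity on $M$ by the standard Jordan-decomposition trick. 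Your route works, but this shortcut avoids the functional-calculus continuity detour.
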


\begin{proof}
  Let $\tau$ be the unique quasitrace on $M$.
  Then $\tau$ coincides with the dimension function on the projections of $M$. Hence $\tau$ is normal. By Proposition \ref{p:3.12} $A$ is $d_\tau$-dense in $M$, so by Theorem \ref{t:5.11} and Lemma \ref{l:3.7}(4), $\tau$ is additive on $M_+$ and thus linear on $M$.
  Hence by Corollary 7 in \cite{JW}, $M$ is a von Neumann $II_1$-factor. (Note that the last conclusion also follows from Proposition \ref{p:3.10} because completeness of the unit ball of $M$ in the $||\cdot||_2$-norm associated with $\tau$ implies that the range of $M$ by the G.N.S.-representation is a von Neumann algebra.)
\end{proof}

\subsection*{Acknowledgments.}
This paper was completed during the author's stay at the Fields Institute in the spring of 2014.
The author would like to thank George Elliott for his help and support during this stay and Jemima Merisca and Patrina Seepersaud for typesetting the manuscript.


\begin{thebibliography}{99}



\bibitem{AO}
 C.~A.~Akemann  and P.~A.~Ostrand,
 \emph{ On a tensor product C*-algebra associated with the free group on two generators},
 J. Math. Soc. Japan {\bf 27}
 (1975),
 589-599.


\bibitem{AB} R.~J.~Archbold and C.~J.~K.~Batty,
\emph{ C*-tensor norms and slice maps},
J. London Math. Soc. {\bf 22}
(1982),
127-138.



\bibitem{BH} B.~Blackadar and D.~Handelman,  \emph{ Dimension functions and traces on C*-algebra}, J. Funct. Anal. {\bf 45} (1982), 297-340.

\bibitem{BKR} B.~Blackadar, A.~Kumjian and M.~R\o rdam, \emph{ Approximately central
matrix units and the structure of noncommutative tori}, K-Theory {\bf 6} (1992), 267-289.


\bibitem{BR} B.~Blackadar and M.~R\o rdam, \emph{Extending states on preordered semigroups and the existence of quasitraces on $C^*$-algebras.}
Journal of Algebra 152, (1992), 240-47.

\bibitem{BW} N.~P.~Brown and W.~Winter, \emph{Quasitraces are traces: a short proof of the finite-nuclear-dimension case.} C.R. Math. A cad. Sci. Soc. R. Can.  33, (2011), 44-49.

\bibitem{C1} J.~Cuntz, \emph{ Simple C*-algebras generated by isometries}, Comm. Math. Phys. {\bf 57} (1977), 173-185.

\bibitem{C2} J.~Cuntz,  \emph{ Dimension functions on simple C*-algebras}, Math. Ann. {\bf 233} (1978), 145-153.

\bibitem{DCH} J.~de~Canni\`ere and U.~Haagerup,
\emph{Multipliers of the Fourier algebras of some simple Lie groups and their discrete subgroups}, Amer. J. Math. {\bf 107} (1985), 455-500.

\bibitem{EH} E.~G.~Effros and U.~Haagerup, \emph{Lifting problems and local
reflexivity for C*-Algebras},  Duke Math.  J. {\bf 52} (1985), 103-128.

\bibitem{HT} U.~Haagerup and S.~Thorbj\o rnsen. \emph{Random matrices and $K$-theory for exact $C^*$-algebras.}
Doc. Math. 4, (1999), 341-450







\bibitem{Kap} I.~Kaplansky, \emph{ Projections in Banach algebras}, Ann. of Math. (2) {\bf 53} (1951), 235-249.

\bibitem{Kap2} I.~Kaplansky, \emph{Algebras of type I}. Ann.  of Math. 56.  (1952), 460-472.

\bibitem{K1} E.~Kirchberg, \emph{ The Fubini theorem for exact C*-algebras}, J. Operator Theory {\bf 10} (1983), 3-8.

\bibitem{K2} E.~Kirchberg, \emph{Commutants of unitaries in UHF-algebras and functorial properties of exactness}, J. reine angew. Math 452. (1994), 39-77.

\bibitem{Kir4} E.~Kirchberg, \emph{On existence of traces on exact stably projectionless simple $C^*$-algebras,} Operator algebras and their applications, 171-172, Fields Institute Commun. 13, Amer. Mat. Soc. (1997).

\bibitem{Kir} E.~Kirchberg  \emph{An example of a quasi-trace, which is not a 2-quasitrace (10pp).}
Unpublished manuscript (2006).


\bibitem{KW} E.~Kirchberg and S.~Wassermann. \emph{Permanence properties of $C^*$-exact groups.} Doc. Math 4(1999), 513-558.

\bibitem{R} M.~R\o rdam, \emph{ On the structure of simple C*-algebras tensored with a UHF-algebra, II},
J. Funct. Anal. {\bf 107} (1992), 255-269.

\bibitem{S} S.~Sakai, \emph{C*-Algebras and W*-Algebras}, Springer, Berlin, 1971.

\bibitem{V1} D.~Voiculescu, \emph{ Symmetries of some reduced free product C*-algebras}, Operator algebras and their connections with topology and ergodic theory (Bu\c{s}teni, 1983), 556-588, Lecture Notes in Math., 1132, Springer, Berlin, 1985.

\bibitem{V2} D.~Voiculescu, \emph{ Circular and semicircular systems and free products}, Operator algebras, unitary representations, enveloping algebras, and invariant theory (Paris, 1989), 45-60, Progr. Math., 92, Birkh\"auser, Boston, 1990.

\bibitem{W} S.~Wassermann, \emph{ On tensor products of certain group C*-algebras}, J. Funct. Anal. 23 (1976), 239-254.


\bibitem{JW} J.D.M.~Wright, \emph{On $AW^*$-algebras of finite type}. J. London Math. Soc. 12 (1976), 431-439.
\end{thebibliography}
\end{document}